\newcommand{\trn} {|\!|\!|} 
\newcommand{\Btrn}  {\Big |\! \Big|\!\Big |}
\newcommand{\ci}[1]{_{{}_{\!\scriptstyle{#1}}}}
\newcommand{\floor}[1]{\lfloor #1 \rfloor }
\newcommand{\Be}{\begin{equation}}
\newcommand{\Ee}{\end{equation}}
\newcommand{\Bea}{\begin{eqnarray}}
\newcommand{\Eea}{\end{eqnarray}}
\newcommand{\Beas}{\begin{eqnarray*}}
\newcommand{\Eeas}{\end{eqnarray*}}
\newcommand{\Benu}{\begin{enumerate}}
\newcommand{\Eenu}{\end{enumerate}}
\newcommand{\Bi}{\begin{itemize}}
\newcommand{\Ei}{\end{itemize}}
\def\intslash{\rlap{\kern  .32em $\mspace {.5mu}\backslash$ }\int}
\def\qsl{{\rlap{\kern  .32em $\mspace {.5mu}\backslash$ }\int_{Q_x}}}
\def\vth{\vartheta}
\def\floor#1{{\lfloor #1 \rfloor }}
\def\emph#1{{\it #1 }}
\def\ga{\gamma}
\def\cf{{\it cf}}
\def\noi{\noindent}
\def\meas{{\text{\rm meas}}}
\def\lc{\lesssim}
\def\gc{\gtrsim}
\def\eps{\varepsilon}
\def\ka{\kappa}
\def\la{\lambda}             \def\La{\Lambda}
\def\om{\omega}
\def\fM{{\mathfrak {M}}}
\def\fN{{\mathfrak {N}}}
\def\fQ{{\mathfrak {Q}}}
\def\bbN{{\mathbb {N}}}
\def\bbR{{\mathbb {R}}}
\def\bbZ{{\mathbb {Z}}}
\def\sA{{\mathscr {A}}}
\def\cE{{\mathcal {E}}}
\def\cF{{\mathcal {F}}}
\def\cL{{\mathcal {L}}}
\def\cS{{\mathcal {S}}}
\def\cU{{\mathcal {U}}}
\def\cV{{\mathcal {V}}}
\def\cY{{\mathcal {Y}}}
\def\cZ{{\mathcal {Z}}}
\def\be#1{\begin{equation}\label{ #1}}
\def\endeq{\end{equation}}
\def\endal{\end{align}}
\def\bas{\begin{align*}}
\def\eas{\end{align*}}
\def\bi{\begin{itemize}}
\def\ei{\end{itemize}}
\def\eps{\varepsilon}
\def\emph#1{{\it #1}}
\def\textbf#1{{\bf #1}}
\def\bbone{{\mathbbm 1}}
\theoremstyle{plain}
  \newtheorem{theorem}{Theorem}[section]
   \newtheorem{proposition}[theorem]{Proposition}
   \newtheorem{lemma}[theorem]{Lemma}
   \newtheorem{corollary}[theorem]{Corollary}
\theoremstyle{remark}
   \newtheorem{remark}[theorem]{Remark}
\theoremstyle{definition}
\begin{document}
\title[Embeddings for  spaces of Lorentz-Sobolev type]
{Embeddings for  spaces of \\ Lorentz-Sobolev type}

\author{Andreas Seeger and Walter Trebels}

\address{Andreas Seeger \\ Department of Mathematics \\ University of Wisconsin \\480 Lincoln Drive\\ Madison, WI, 53706, USA} \email{seeger@math.wisc.edu}

\address{Walter Trebels\\
Fachbereich Mathematik\\ Technische Universit\"at Darmstadt\\ Schlossgartenstr. 7\\
64289 Darmstadt, Germany} \email{trebels@mathematik.tu-darmstadt.de}
\begin{abstract} The purpose of this paper is to 
characterize  all embeddings for    versions of Besov and Triebel-Lizorkin spaces where the underlying Lebesgue space  metric is replaced by a Lorentz space  metric. We include two appendices, one on  the relation between classes of 
endpoint Mikhlin-H\"ormander type 
Fourier multipliers, and one on the constant in  the triangle inequality for  the spaces $L^{p,r} $ when $p<1$.
\end{abstract}
\subjclass[2010]{46E35, 46E30, 42B15, 42B25}
\keywords{Lorentz spaces, Besov spaces, Sobolev spaces, Triebel-Lizorkin spaces, Embeddings, Sobolev embeddings, H\"ormander multiplier theorem}


\thanks{Research supported in part
by National Science Foundation grant DMS 1500162.}

\maketitle

\section{Introduction}
We  consider Lorentz space variants of the classical function space scales of  Sobolev, Besov and Triebel-Lizorkin spaces for  distributions on $\bbR^d$. We use the traditional Fourier analytical definition (\cf. \cite{triebel1}) and work with an inhomogeneous Littlewood-Paley decomposition 
$\{\La_k\}_{k=0}^\infty$ which is defined as follows. Pick a  $C^\infty$ function $\beta_0$ such that $\beta_0(\xi)=1$ for $|\xi|\le 3/2$ and $\beta_0(\xi)=0$ for  $|\xi|\ge 7/4$.
For $k\ge 1$ let $\beta_k(\xi)=\beta_0(2^{-k}\xi)-\beta_0(2^{1-k}\xi)$.  
Define  $\La_k$ via the Fourier transform by $\widehat {\La_k f}=\beta_k\widehat f$, $k=0,1,2,\dots$. 

Let $\cY$ be a rearrangement invariant quasi-Banach space of functions on $\bbR^d$, and 
define
 \begin{align}
&\|f\|_{B^s_q[\cY]}
= \Big(\sum_k 2^{ksq}\big \|\La_k f\|_{\cY}^q\Big)^{1/q},
\\&\|f\|_{F^s_q[\cY]}
= \Big \| \Big(\sum_k |2^{ks} \La_k f|^q\Big )^{1/q}  \Big\|_{\cY}.
\end{align}
When the  functors $B^s_q$ and $F^s_q$ are applied to the Lebesgue spaces 
$\cY=L^p$  one gets the usual classes 
 of Besov spaces $B_q^s[L^p]\equiv B^s_{p,q}$ and Triebel-Lizorkin spaces $F^s_q[L^p] \equiv F^s_{p,q}$.
Here we  take for $\cY$ a Lorentz space $L^{p,r}$, see \S\ref{reviewsect} for  definitions and a review of basic facts. Of course $L^{p,p}=L^p$. It is also  customary to  write $B^s_q[L^{p,r}]=B^s_{(p,r),q}$,  $F^s_q[L^{p,r}]= F^s_{(p,r),q}$, although for better readability we prefer the   functorial notation.
For $q=2$ one obtains the  Lorentz versions of the Hardy-Sobolev spaces, also denoted by
$H^s_{(p,r)}$. 
For the range $1<p<\infty$ the space 
$H^s_{(p,r)} \equiv   F^s_2[L^{p,r}] $ can be identified with a variant of Bessel-potential spaces
(\cf. \cite[ch.V]{stein-sing}), namely we have 
\Be\label{besselpot}\|f\|_{F^s_2[L^{p,r}]}\approx \|(I-\Delta)^{s/2} f\|_{p,r}, \quad 1<p<\infty.\Ee
These spaces have been used repeatedly in the literature (see e.g. \cite{stein-editor}, \cite{cianchi-pick}, \cite{ko-lee}, \cite{grafakos-slavikova}), although our original motivation  came from a result about embeddings in   \cite{gott}. 
Applications suggest   natural questions about the relation between these spaces, in particular the relation between  Besov and Lorentz-Sobolev spaces. 
Our two main theorems 
 characterize all embeddings which involve one  space in the 
 $B^{s}_{q}[L^{p,r}]$ family and one  space in the 
 $F^{s}_{q}[L^{p,r}] $ family.

\begin{theorem}\label{BinFthm}
Let $s_0, s_1\in \bbR$, $0<p_0,p_1<\infty$, $0<q_0, q_1, r_0, r_1\le \infty$. 
The embedding 
\Be
\label{BinFembed}B^{s_0}_{q_0}[L^{p_0, r_0}] \hookrightarrow F^{s_1}_{q_1}[L^{p_1,r_1}]\Ee holds 
if and only if one of the following six conditions is satisfied.

(i) $s_0-s_1>d/p_0-d/p_1>0$.

(ii) $s_0>s_1$, $p_0=p_1$,  $r_0\le r_1$.

(iii) $s_0-s_1=d/p_0-d/p_1>0$, $q_0\le r_1$.

(iv) $s_0=s_1$, $p_0=p_1\neq q_1$, $r_0\le r_1$, $q_0\le \min \{p_1,q_1,  r_1\}$.

(v) $s_0=s_1$, $p_0=p_1=q_1\ge r_0$, $r_0\le r_1$, $q_0\le\min\{ p_1,r_1\}$.

(vi) $s_0=s_1$, $p_0=p_1=q_1<r_0$, $r_0\le r_1$, $q_0< p_1$.
\end{theorem}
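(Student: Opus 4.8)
\emph{Overview.} The statement is an equivalence, so there are two tasks: deriving the embedding from each of (i)--(vi), and showing that the embedding forces one of (i)--(vi). For the first I would reduce to a short list of building blocks and compose them; for the second I would test the embedding on a few families of explicit functions.

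\emph{Building blocks and the easy cases.} The tools I would establish first are: (a) a Bernstein--Nikolskii inequality in Lorentz spaces --- if $\widehat g$ is supported in $\{|\xi|\le 2^{k+1}\}$ then $\|g\|_{L^{p_1,u_1}}\ls 2^{kd(1/p_0-1/p_1)}\|g\|_{L^{p_0,u_0}}$, for all fine indices $u_0,u_1$ when $p_0<p_1$ and for $u_0\le u_1$ when $p_0=p_1$; (b) the one-sided comparison $B^{s}_{\rho}[L^{p,r}]\hookrightarrow F^{s}_{q}[L^{p,r}]$ whenever $\rho\le q$ and $\rho<\min\{p,r\}$ --- bound $(\sum_k a_k^q)^{1/q}\le(\sum_k a_k^{\rho})^{1/\rho}$ pointwise in $x$, then use $\|\sum_k b_k\|_{L^{p/\rho,r/\rho}}\ls\sum_k\|b_k\|_{L^{p/\rho,r/\rho}}$, valid since $p/\rho>1$ --- which in the presence of a smoothness gap upgrades to $B^{s+\delta}_{\rho}[L^{p,r}]\hookrightarrow F^{s}_{q}[L^{p,r}]$ for \emph{every} $\delta>0$, $\rho<\min\{p,r\}$, and \emph{every} $q$, the geometric series in the scale swallowing the $\ell^q$-sum; (c) the monotonicities $L^{p,r_0}\hookrightarrow L^{p,r_1}$ ($r_0\le r_1$) and $B^{s}_{q_0}[\cY]\hookrightarrow B^{s}_{q_1}[\cY]$, $F^{s}_{q_0}[\cY]\hookrightarrow F^{s}_{q_1}[\cY]$ ($q_0\le q_1$), lifted from them; and (d) the scalar estimate $\sup_k b_k\le(\sum_k b_k^{q})^{1/q}$ and its reverse when the weights carry strictly ordered exponents. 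With (a)--(d) in place, cases (i) and (ii) follow at once: choosing a small $\rho<\min\{p_1,r_1\}$ and a small $\delta>0$, compose $B^{s_0}_{q_0}[L^{p_0,r_0}]\hookrightarrow B^{s_1+\delta}_{\rho}[L^{p_1,r_1}]\hookrightarrow F^{s_1}_{q_1}[L^{p_1,r_1}]$, where the first arrow is a subcritical Besov--Sobolev embedding produced by (a) and (d) using the strict gap $s_0-s_1>d/p_0-d/p_1$ in case (i), respectively $s_0>s_1$ with $p_0=p_1$ in case (ii), and the second is the upgraded form of (b).

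\emph{The limiting cases and the main difficulty.} Case (iii) is the Lorentz analogue of Franke's embedding. I would factor it as $B^{s_0}_{q_0}[L^{p_0,r_0}]\hookrightarrow B^{s_1}_{q_0}[L^{p_1,q_0}]\hookrightarrow F^{s_1}_{q_1}[L^{p_1,q_0}]\hookrightarrow F^{s_1}_{q_1}[L^{p_1,r_1}]$, the first arrow being the critical-line Besov--Sobolev embedding with the same fine index $q_0$ (from (a) with $s_1+d(1/p_0-1/p_1)=s_0$, the source Lorentz index being irrelevant) and the last using $q_0\le r_1$ via (c); the middle arrow --- the Franke-type step at equal $(s,p)$ with Lorentz index equal to the Besov fine index --- is the genuine technical core, which I would treat through the identities $\|f\|_{F^{s}_{q_0}[L^{p,q_0}]}^{q_0}\approx\|\sum_k 2^{ksq_0}|\La_k f|^{q_0}\|_{L^{p/q_0,1}}$ and $\|f\|_{B^{s}_{q_0}[L^{p,q_0}]}^{q_0}\approx\sum_k\|2^{ksq_0}|\La_k f|^{q_0}\|_{L^{p/q_0,1}}$, reducing it to a sharp summation estimate in $L^{p/q_0,1}$ (where the triangle-inequality constant studied in the second appendix enters when $p<q_0$). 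Cases (iv)--(vi) have $s_0=s_1$ and $p_0=p_1=:p$, so source and target differ only in fine indices; I would take $\rho=q_0$, pass from $L^{p,r_0}$ to $L^{p,r_1}$ by (c), and then apply a sharp form of (b). The trichotomy $q_1\ne p$ / $q_1=p\ge r_0$ / $q_1=p<r_0$ is exactly the one governing at which boundary value of $q_0$ the comparison $B^{s}_{q_0}[L^{p,r_1}]\hookrightarrow F^{s}_{q_1}[L^{p,r_1}]$ still holds: for $q_1=p$ the space $F^{s}_{p}[L^{p,r}]$ stops coinciding with $B^{s}_{p}[L^{p,r}]$ as soon as $r>p$, and this degeneration forces the split recorded in (v) and (vi), and the strict inequality $q_0<p$ in (vi). I expect this equal-smoothness, equal-$p$ regime, together with the Franke-type step of (iii), to be the hard part: there the two spaces are ``the same size'', so the embedding is decided entirely by the interaction of $q_0,q_1,r_0,r_1$ with $p$, and the appearance of $p$ among the fine conditions reflects the non-normability of $L^{p,r}$ and the failure of its triangle inequality for $p<1$ --- so these cases genuinely demand the sharp-constant analysis of the second appendix, whereas (i) and (ii) are routine once the Lorentz Bernstein--Nikolskii inequality is in hand.

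\emph{Necessity.} I would test the embedding on three families, all assembled from a fixed Schwartz bump $\phi$ whose Fourier transform lives in a fixed dyadic annulus, so that $\La_k$ acts as essentially the identity on a term modulated to frequency $2^k$: (1) a single modulated bump $\phi(2^k\cdot)\,e^{i\langle 2^k e,\cdot\rangle}$ with $k\to\infty$, which by Lorentz scaling forces $s_0-d/p_0\ge s_1-d/p_1$ and separates the strict regime (i) from the limiting one; (2) a single scale carrying $\sim N$ spatially separated modulated bumps, arranged so that the profile is comparable to the indicator of a set of measure $\sim 2^{-kd}N$ --- letting $N\to\infty$ forces $p_0\le p_1$ --- and, when further refined so that the amplitudes and piece-sizes realize a decreasing rearrangement of the shape $t^{-1/p}(\log 1/t)^{-b}$, forces $r_0\le r_1$ and the $q_0\le r_1$-type constraints; (3) superpositions over many scales, one bump per scale, with amplitudes $\sim 2^{-ks_0}k^{-1/q_0}$, which force the residual $\ell^q$-type endpoint constraints. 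The delicate counterexamples --- sharpness of $q_0\le r_1$ on the critical line, and of the whole trichotomy in (iv)--(vi), in particular the strictness in (vi) --- require combining (2) and (3): many scales, each carrying a tuned number of bumps, so that the $\ell^q$-structure and the Lorentz rearrangement are visible simultaneously.
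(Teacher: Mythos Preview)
Your reduction of case (iii) has a genuine gap. You factor the Franke-type embedding as
\[
B^{s_0}_{q_0}[L^{p_0,r_0}]\hookrightarrow B^{s_1}_{q_0}[L^{p_1,q_0}]\hookrightarrow F^{s_1}_{q_1}[L^{p_1,q_0}]\hookrightarrow F^{s_1}_{q_1}[L^{p_1,r_1}],
\]
and treat the middle arrow as an equal-$(s,p)$ embedding. But by the very necessity results you plan to prove (your test families (2) and (3)), an embedding $B^{s}_{q_0}[L^{p,\rho}]\hookrightarrow F^{s}_{q_1}[L^{p,\rho}]$ forces $q_0\le q_1$ and $q_0\le p$. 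Neither inequality is part of the hypothesis of (iii): there one only assumes $q_0\le r_1$, while $q_1$ may be arbitrarily small and $q_0$ may exceed $p_1$. Concretely, take $p_0<p_1$, $s_0-s_1=d/p_0-d/p_1$, $q_1$ tiny, and $q_0>p_1$ with $q_0\le r_1$; then (iii) applies but your middle arrow is false. Your proposed device --- the identity $\|\cdot\|_{F^{s}_{q_0}[L^{p,q_0}]}^{q_0}=\|\sum_k(\cdot)\|_{L^{p/q_0,1}}$ together with a ``sharp summation estimate in $L^{p/q_0,1}$'' --- does not rescue this: when $q_0>p$ the exponent $p/q_0$ is below $1$, and the $p$-triangle inequality from the appendix bounds $\|\sum g_k\|_{L^{p/q_0,1}}$ only by the $\ell^{p/q_0}$-sum of the pieces, which is \emph{larger} than the $\ell^1$-sum you need. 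And even when $q_0\le p$, your identity lands you in $F^{s}_{q_0}$, not $F^{s}_{q_1}$, and passing to $q_1$ again requires $q_0\le q_1$.

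The point is that the Franke embedding is not a consequence of the equal-$(s,p)$ theory plus Bernstein--Nikolskii; it needs its own argument. The paper proves directly the extreme endpoint $B^{s_0}_\rho[L^{p_0,\infty}]\hookrightarrow F^{s_1}_q[L^{p_1,\rho}]$ for \emph{all} $q>0$, via a discretization over dyadic cubes (replacing $\La_k f$ by its infimum of the Peetre maximal function on each cube, so that rearrangements become step functions), followed by a duality pairing in $L^{(p_1/q)',(\rho/q)'}$ and a H\"older/geometric-series computation in the spirit of Vyb\'iral. In fact the logic runs the other way around: the paper \emph{derives} the critical Besov--Besov embedding $B^{s_0}_{q_0}[L^{p_0,r_0}]\hookrightarrow B^{s_1}_{q_0}[L^{p_1,q_0}]$ (your first arrow) from the Franke and Jawerth theorems, not conversely. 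Your treatment of (i), (ii), (iv)--(vi) and of necessity is broadly in line with the paper, though for (iv)--(vi) your building block (b) with the strict condition $\rho<\min\{p,r\}$ will not reach the endpoint cases $q_0=p$ or $q_0=r_1$; the paper handles these by a case analysis combining the ordinary triangle inequality in $L^{p/q,r/q}$ (when $p/q>1$) with the $p$-triangle inequality (when $p/q<1$), which is where the trichotomy (iv)/(v)/(vi) actually originates.
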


\begin{theorem}\label{FinBthm}
Let $s_0, s_1\in \bbR$, $0<p_0,p_1<\infty$, $0<q_0, q_1, r_0, r_1\le \infty$. 
The embedding 
\Be \label{FinBembed}
F^{s_0}_{q_0}[L^{p_0, r_0}] \hookrightarrow B^{s_1}_{q_1}[L^{p_1,r_1}]\Ee holds 
if and only if one of the following six conditions is satisfied.

(i) $s_0-s_1>d/p_0-d/p_1>0$.

(ii) $s_0>s_1$, $p_0=p_1$,  $r_0\le r_1$.

(iii) $s_0-s_1=d/p_0-d/p_1>0$, $r_0\le q_1$.

(iv) $s_0=s_1$, $p_0=p_1\neq q_0$, $r_0\le r_1$, $q_1\ge \max\{p_0,q_0,  r_0\}$.

(v) $s_0=s_1$, $p_0=p_1=q_0\le r_1$, $r_0\le r_1$, $q_1\ge \max\{p_0,r_0\}$

(vi) $s_0=s_1$, $p_0=p_1=q_0>r_1$, $r_0\le r_1$,  $q_1>p_0$.
\end{theorem}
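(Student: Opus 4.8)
The plan is to prove Theorem~\ref{FinBthm} by exploiting duality-type symmetry with Theorem~\ref{BinFthm} wherever possible, and otherwise by direct Littlewood-Paley arguments. First I would observe that the two theorems are formally related by swapping the roles of the $F$ and $B$ functors, which corresponds on the level of conditions to reversing inequalities between $q$-indices and between $r$-indices. So a natural first step is to establish a precise reduction: since $F^s_q[L^{p,r}]$ interpolates between $B$-spaces in a controlled way, and since $B^s_{\min(p,q)}[L^{p,r}] \hookrightarrow F^s_q[L^{p,r}] \hookrightarrow B^s_{\max(p,q)}[L^{p,r}]$ (the sandwich inequalities, which follow from the triangle inequality in $L^{p,r}$ and Minkowski's inequality, with the $p<1$ case handled via the appendix on the quasi-triangle constant), the embedding \eqref{FinBembed} can often be bracketed between embeddings already characterized in Theorem~\ref{BinFthm} or by elementary Besov-Besov embeddings.

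The core of the argument splits according to the relation of $s_0$ and $s_1$. For the ``regularity-loss'' cases $s_0 > s_1$ or $s_0 - s_1 = d/p_0 - d/p_1 > 0$ (cases (i)--(iii)), I would prove sufficiency by combining the Lorentz-space Sobolev embedding $L^{p_0,r_0}\hookrightarrow L^{p_1,r_1}$ (valid when $1/p_0 - 1/p_1 = (s_0-s_1)/d$ requires $r_0 \le r_1$, or strictly when there is room to spare) applied level-by-level, together with the sandwich $F^{s_0}_{q_0}[L^{p_0,r_0}]\hookrightarrow B^{s_0}_{\max(p_0,q_0)}[L^{p_0,r_0}]$ and then a Besov--Besov embedding down to $B^{s_1}_{q_1}[L^{p_1,r_1}]$; the condition $r_0 \le q_1$ in (iii) is exactly what is needed to pass from the $\ell^{r_0}$-type summation coming out of the Lorentz embedding at the critical exponent to the $\ell^{q_1}$ summation in the target. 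For the diagonal case $s_0 = s_1$, $p_0 = p_1$ (cases (iv)--(vi)), which is the delicate part, the necessary and sufficient conditions hinge on the fine interplay at the level of a single frequency block or a lacunary sum of blocks: here one uses that on a single dyadic block $F^s_q$ and $B^s_q$ both reduce to $L^{p,r}$, so the $s$, $p$ constraints are forced, while the $q$- and $r$-constraints come from testing on sums $\sum_k \lambda_k g_k$ with $g_k$ having disjoint frequency supports and carefully chosen spatial profiles (Rademacher/randomization or explicit bump constructions) to saturate both the $\ell^{q}$ and the Lorentz-norm scales simultaneously.

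For necessity throughout, the plan is to use the standard battery of counterexamples: (a) dilation/homogeneity to force the relation $s_0 - s_1 \ge d/p_0 - d/p_1$ and, on the critical line, the correct ordering of secondary indices; (b) functions with a single dyadic frequency block, testing $L^{p_0,r_0}\hookrightarrow L^{p_1,r_1}$, which forces $p_0 = p_1$ off the Sobolev line and $r_0 \le r_1$; (c) finite or infinite lacunary sums $f = \sum_{k\in E} 2^{-ks}\varepsilon_k \Lambda_k$-type test functions to separate the $q$- and $r$-indices — this is where the asymmetry between the $F\hookrightarrow B$ and $B\hookrightarrow F$ cases appears, since the norm on the $F$-side involves an $\ell^{q_0}(L^{p_0,r_0})$ which by Minkowski/Fubini-type estimates behaves like $\max$ rather than $\min$ of the indices. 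The exclusion $p_0 = p_1 = q_0$ being split into the subcases $q_0 \le r_1$ versus $q_0 > r_1$ (and the role of $p_0 \ne q_0$ in (iv)) reflects that when $q_0$ equals $p_0$ the $F$-norm collapses to a plain $L^{p_0,r_0}$-norm of a single function built from the $\Lambda_k f$, changing which summation exponent is available.

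The main obstacle I anticipate is the diagonal critical case, precisely the dichotomy in (v) versus (vi) and the condition $q_1 > p_0$ (strict) in (vi): proving that strictness is \emph{necessary} requires a sharp example showing that $q_1 = p_0 = q_0 > r_1$ fails, which amounts to a delicate computation of the $B^{s}_{q_1}[L^{p_0,r_1}]$-norm of a lacunary sum whose $F^{s}_{q_0}[L^{p_0,r_0}]$-norm is finite — essentially a quantitative failure of an $\ell^{p_0}\to\ell^{p_0}$ inequality after passing through the Lorentz metric, and one must be careful because for $p_0 < 1$ the quasi-triangle constant (the subject of the second appendix) enters and the naive duality is unavailable. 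A secondary difficulty is bookkeeping: verifying that the six listed conditions are mutually exclusive and jointly exhaustive, i.e.\ that no embedding slips through a gap, which I would handle by a systematic case analysis on the sign of $s_0 - s_1$ and then on $p_0$ versus $p_1$, $q_0$, $q_1$.
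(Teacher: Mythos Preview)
Your approach to case (iii) has a genuine gap. The chain you propose,
\[
F^{s_0}_{q_0}[L^{p_0,r_0}] \hookrightarrow B^{s_0}_{\max(p_0,q_0)}[L^{p_0,r_0}] \hookrightarrow B^{s_1}_{q_1}[L^{p_1,r_1}],
\]
breaks at the second arrow: by the Besov--Besov embedding on the critical line (Theorem~\ref{BinBthm}(iii)), that step requires $\max(p_0,q_0)\le q_1$, whereas (iii) asserts the embedding under the strictly weaker hypothesis $r_0\le q_1$, with $q_0$ \emph{arbitrary}. Take for instance $q_0=\infty$, $r_0=q_1=1$: condition (iii) is satisfied, but your sandwich lands in $B^{s_0}_\infty$ and you cannot reach $B^{s_1}_1$ by any diagonal-$s$ Besov embedding. (In fact even the sandwich itself is slightly off: the correct right endpoint in the diagonal case is $B^s_{\max(p,q,r)}$, not $B^s_{\max(p,q)}$, as Corollary~\ref{btldualthm} shows.) The whole point of the Jawerth phenomenon is that the $F$-norm, through its \emph{outer} Lorentz exponent $r_0$, controls far more than the naive sandwich suggests; the paper establishes this (Theorem~\ref{jawerththm}) by a rearrangement argument in the spirit of Vyb\'iral, passing to dyadic-cube suprema $g_k$ dominated by a single function $G=\sup_k 2^{ks_0}\fM_k\La_k f$, and exploiting that $g_k^*$ is constant on intervals of length $2^{-kd}$ to convert the $\ell^{q_1}(L^{p_1,r_1})$ norm into a discrete sum controllable by $\|G\|_{p_0,\rho}$ with $\rho=r_0$. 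This mechanism is not visible from the sandwich or from level-by-level Lorentz--Sobolev.

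Your outline for the diagonal cases (iv)--(vi) is also too vague to constitute a strategy. The paper reduces these to pure sequence-space embeddings $L^{p,r_0}(\ell^{q_0})\hookrightarrow \ell^{q_1}(L^{p,r_1})$ (Proposition~\ref{btldualprepthm}), and the nontrivial inclusions there---in particular the ones driving the distinction between (iv), (v), (vi)---are obtained by a duality argument from the corresponding $\ell^q(L^{p,r})\hookrightarrow L^{p,r}(\ell^q)$ inclusions (Lemmas~\ref{dualitylem}, \ref{secdualitylem}), which in turn rest on the two triangle inequalities \eqref{trianglep>1} and \eqref{ptrianglelem}. For the necessity of the strict inequality $q_1>p_0$ in (vi), the paper's example (\S\ref{spequality-nec-pr}) is a sum of frequency-localized translates with logarithmically weighted coefficients, calibrated so that the $F^s_p[L^{p,r_0}]$ norm is $O(N^{1/p})$ while the $B^s_p[L^{p,r_1}]$ norm picks up an extra factor $(\log N)^{1/r_1-\delta}$; you correctly flag this as the main obstacle, but a generic ``lacunary sum'' will not produce it---the logarithmic structure is essential.
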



\begin{remark}The  interesting cases deal with the critical relation
\Be \label {sobembrelation}
s_0-s_1=d/p_0-d/p_1,\Ee
when  $p_0<p_1$,  and when $p_0=p_1$. The case 
$p_0<p_1$ in   (iii)  of the two theorems sheds some light on the sharp embedding theorems by Jawerth \cite{jawerth} and Franke \cite{franke}.
Part (iii) of Theorem \ref{FinBthm} extends and improves Jawerth's theorem stating that
$F^{s_0}_{q_0} [L^{p_0}] \hookrightarrow
B^{s_1}_{p_0} [L^{p_1}]$ for any $q_0\le \infty$,  under the assumption \eqref{sobembrelation}, $p_0<p_1$.
Part (iii) of Theorem \ref{BinFthm} extends the dual version of 
 Franke stating that
$B^{s_0}_{p_1} [L^{p_0}] \hookrightarrow F^{s_1}_{q_1}[L^{p_1}],$ for any $q_1>0$, again under the assumption 
 \eqref{sobembrelation}, $p_0<p_1$.  
  For the Hardy-Sobolev case, $q_1=2$,  a partial result 
  of Theorem \ref{BinFthm},  (iii) 
 can be found in \cite{gott}, under the additional assumption  $r_0\le r_1$.
 \end{remark}
 \begin{remark}
  We shall see in Appendix \ref{Hoersect} that an application of  parts (iii) of  Theorems
\ref{BinFthm}   and \ref{FinBthm} 
in tandem is useful 
to compare sharp versions of the H\"ormander multiplier theorem in \cite{seeger1} and \cite{grafakos-slavikova}.
\end{remark}

 Parts (iv), (v), (vi) of both theorems deal with the endpoint case $s_0=s_1$, $p_0=p_1$ in 
 \eqref{sobembrelation}.  The conditions on the $q_i$ and $r_i$ are now more restrictive. The sufficiency of the conditions in (iv), (v), (vi) for 
 \eqref{BinFembed}, \eqref{FinBembed},  resp., 
   follow from corresponding embedding results for 
the spaces 
$\ell^q(L^{p,r})$ and $L^{p,r}(\ell^q)$ 
for sequences of functions $f=\{f_k\}_{k=0}^\infty$.
It turns out that these results can be reduced to two types of triangle inequalities for Lorentz spaces. We note that the two strict inequalities in parts (vi) of both theorems can be traced to the failure of a triangle inequality in $L^{1,\rho}$ for $\rho>1$.  While considering the  results in parts (iv), (v)  of the two theorems 
we came across the question on how  the constants in a generalized triangle inequality  for quasi-norms in $L^{p,\rho}$ depend on $\rho$  when $p<1$ and $p<\rho<\infty$. This dependence is not crucial to our results but  may be interesting in its own right, and we include a result as  Appendix \ref{appendix}.

The above theorems are complemented by more straightforward results about embeddings within the $B^s_q[L^{p,r}]$ and 
$F^s_q[L^{p,r}]$ scales of spaces.

\begin{theorem}\label{BinBthm}
Let $s_0, s_1\in \bbR$, $0<p_0,p_1<\infty$, $0<q_0, q_1, r_0, r_1\le \infty$. 
The embedding 
\Be \label{BinBembed}
B^{s_0}_{q_0}[L^{p_0, r_0}] \hookrightarrow B^{s_1}_{q_1}[L^{p_1,r_1}]\Ee holds 
if and only if one of the following four  conditions is satisfied.

(i) $s_0-s_1>d/p_0-d/p_1>0$.

(ii) $s_0>s_1$, $p_0=p_1$,  $r_0\le r_1$.

(iii) $s_0-s_1=d/p_0-d/p_1>0$, $q_0\le q_1$.

(iv) $s_0=s_1$, $p_0=p_1$,  $r_0\le r_1$, $q_0\le  q_1$.
\end{theorem}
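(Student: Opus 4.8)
\textbf{Proof proposal for Theorem \ref{BinBthm}.} The plan is to reduce everything to the corresponding statements about the sequence spaces $\ell^q_s(\cY)$ (weighted $\ell^q$ of $\cY$-valued sequences), together with the mapping properties of the Littlewood--Paley projectors on Lorentz spaces. First, the \emph{necessity} direction: I would test \eqref{BinBembed} on lacunary-type examples. For the scaling conditions relating $s_0,s_1,p_0,p_1$ one uses a single bump $\psi$ dilated to frequency scale $2^k$, i.e. $f_k$ with $\widehat{f_k}$ supported in $|\xi|\sim 2^k$ and $\|f_k\|_{L^{p_0,r_0}}\sim 2^{-kd/p_0}\cdot 2^{kd}$-type normalizations (using the dilation behaviour of the Lorentz quasinorm, recalled in \S\ref{reviewsect}); letting $k\to\infty$ forces either $s_0-s_1>d/p_0-d/p_1$, or equality in the Sobolev line. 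To rule out $p_0>p_1$ and to pin down $p_0=p_1$ when $s_0-s_1<d/p_0-d/p_1$ is impossible, one superposes such bumps at widely separated spatial locations (a standard "spreading mass" argument) so that the $\cY$-quasinorm of a sum of $N$ separated pieces behaves like $N^{1/p_i}$ times the single-piece norm; comparing the two sides as $N\to\infty$ gives $p_0\le p_1$, and combined with the scaling relation in the subcritical regime forces $p_0=p_1$. For the conditions on $q_0,q_1,r_0,r_1$ one takes $f=\sum_{k\in E}2^{-ks_0}a_k g_k$ where the $g_k$ are frequency-localized to disjoint annuli and either all equal (to probe the $\ell^q$ exponents, yielding $q_0\le q_1$ in the critical cases (iii),(iv)) or chosen so that $\{g_k\}$ exhibits the extremal behaviour of the Lorentz quasinorm (to extract $r_0\le r_1$ when $p_0=p_1$, using that $L^{p,r_0}\hookrightarrow L^{p,r_1}$ iff $r_0\le r_1$).

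Second, the \emph{sufficiency} direction. In all four cases the key reduction is: if $\cY_0\hookrightarrow\cY_1$ are rearrangement-invariant quasi-Banach function spaces then $B^{s}_{q}[\cY_0]\hookrightarrow B^{s}_{q}[\cY_1]$, and if moreover $\ell^{q_0}\hookrightarrow\ell^{q_1}$ (i.e. $q_0\le q_1$) then one may also lower the smoothness-weighted $\ell^q$ exponent. Concretely, for case (iv) one has $L^{p_0,r_0}\hookrightarrow L^{p_0,r_1}$ since $r_0\le r_1$, hence $\|\La_k f\|_{L^{p_1,r_1}}\lesssim\|\La_k f\|_{L^{p_0,r_0}}$ termwise, and then the monotonicity $\ell^{q_0}\hookrightarrow\ell^{q_1}$ (with the shared weight $2^{ks}$) finishes it. Case (ii) is the same plus the trivial observation that for $s_0>s_1$ the summability exponents are irrelevant: $\big(\sum_k 2^{ks_1 q_1}\|\La_k f\|^{q_1}\big)^{1/q_1}\lesssim \sup_k 2^{ks_1}\|\La_k f\| \lesssim \big(\sum_k 2^{ks_0 q_0}\|\La_k f\|^{q_0}\big)^{1/q_0}$ by absorbing $2^{k(s_1-s_0)}$ into a geometric series (Hölder if $q_0<\infty$). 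Case (iii), the Sobolev-type embedding on the critical line with $p_0<p_1$, is the substantive one: here I would invoke the elementary Nikolskii-type inequality for Lorentz spaces, $\|\La_k f\|_{L^{p_1,q_1}}\lesssim 2^{kd(1/p_0-1/p_1)}\|\La_k f\|_{L^{p_0,\,q_1}}$, reducing matters to the fixed-$p$ case; combined with $q_0\le q_1$ (so that $\ell^{q_0}_{s_0}\hookrightarrow\ell^{q_1}_{s_0}$) and then absorbing the factor $2^{kd(1/p_0-1/p_1)}=2^{k(s_0-s_1)}$ into the change of smoothness index, one obtains \eqref{BinBembed}. The needed Lorentz-space Bernstein/Nikolskii inequality either follows from the $L^p$ version plus real interpolation, or directly from the kernel estimate for $\La_k$.

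The main obstacle I anticipate is the precise bookkeeping in case (iii) when some of $p_0,q_0,q_1$ are below $1$: one must be careful that the Nikolskii inequality on $L^{p_0,q_1}$ holds with a constant independent of $k$ and that the ensuing $\ell^{q}$-manipulations (which may only be quasi-norms) are legitimate — this is exactly where one uses that summing a geometric series of quasi-norm terms is harmless, and possibly the refined triangle-inequality constants from Appendix \ref{appendix}. A secondary subtlety is making the necessity examples in the critical-line cases genuinely \emph{extremal} for both a Lorentz exponent and an $\ell^q$ exponent simultaneously; I would handle this by tensoring — frequency-disjoint pieces encode the $\ell^q$ behaviour, while within one frequency block a superposition of spatially separated rescaled atoms encodes the Lorentz $L^{p,r}$ behaviour — so that the two obstructions ($q_0\le q_1$ and $r_0\le r_1$) decouple cleanly. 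Everything else (cases (i), (ii), (iv) and the non-critical necessity) is routine once the dilation and translation behaviour of $\|\cdot\|_{L^{p,r}}$ recalled in \S\ref{reviewsect} is in hand.
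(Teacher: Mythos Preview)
Your necessity arguments and the sufficiency in cases (i), (ii), (iv) match the paper's treatment (cf.\ \S\ref{meassmoothness} and \S\ref{conclusion}). The genuine divergence is in the sufficiency for case (iii). The paper does \emph{not} argue via a Nikolskii/Bernstein inequality; instead it factors through a Triebel--Lizorkin--Lorentz space at an intermediate point on the critical line: choosing $p_0<\tilde p<p_1$, $s_1<\tilde s<s_0$ with $s_0-\tilde s=d/p_0-d/\tilde p$, and $\tilde r$ with $q_0\le\tilde r\le q_1$, one has
\[
B^{s_0}_{q_0}[L^{p_0,r_0}] \hookrightarrow F^{\tilde s}_{\tilde q}[L^{\tilde p,\tilde r}] \hookrightarrow B^{s_1}_{q_1}[L^{p_1,r_1}]
\]
by Theorems~\ref{frankethm} and~\ref{jawerththm}. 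This is economical within the paper, since it recovers Theorem~\ref{BinBthm} (and likewise Theorem~\ref{FinFthm}) as a corollary of the harder $B\hookrightarrow F$ and $F\hookrightarrow B$ results proved in \S\ref{jawerthfrankesect}.

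Your direct route is also valid and in fact more elementary, since it avoids the Franke--Jawerth machinery altogether. One small correction: the inequality you need is
\[
\|\La_k f\|_{L^{p_1,r_1}}\lesssim 2^{kd(1/p_0-1/p_1)}\|\La_k f\|_{L^{p_0,r_0}}
\]
with \emph{arbitrary} $r_0,r_1$ (since case (iii) imposes no relation between them), not the version with the same second Lorentz index on both sides that you wrote down --- that weaker form would leave you needing $L^{p_0,r_0}\hookrightarrow L^{p_0,q_1}$, which is not assumed. The full form does follow from the $L^p$ Nikolskii inequality by two successive real interpolations (first in the source exponent, then in the target), using that $p_0<p_1$ strictly. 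Your worry about $p_i,q_i<1$ is not an obstacle here: the $L^p$ Nikolskii inequality holds for all $0<p_0\le p_1\le\infty$, real interpolation works for quasi-Banach couples, and the residual $\ell^{q_0}\hookrightarrow\ell^{q_1}$ step is plain monotonicity; no appeal to Appendix~\ref{appendix} is needed.
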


\begin{theorem}\label{FinFthm}
Let $s_0, s_1\in \bbR$, $0<p_0,p_1<\infty$, $0<q_0, q_1, r_0, r_1\le \infty$. 
The embedding 
\Be \label{BinBembed}
F^{s_0}_{q_0}[L^{p_0, r_0}] \hookrightarrow F^{s_1}_{q_1}[L^{p_1,r_1}]\Ee holds 
if and only if one of the following four  conditions is satisfied.

(i) $s_0-s_1>d/p_0-d/p_1>0$.

(ii) $s_0>s_1$, $p_0=p_1$,  $r_0\le r_1$.

(iii) $s_0-s_1=d/p_0-d/p_1>0$, $r_0\le r_1$.

(iv) $s_0=s_1$, $p_0=p_1$,  $r_0\le r_1$, $q_0\le  q_1$.
\end{theorem}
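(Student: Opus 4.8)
The plan is to prove Theorem \ref{FinFthm} by combining reductions to sequence-space embeddings with the scalar embedding results already established, following the same strategy used for Theorems \ref{BinFthm} and \ref{FinBthm} but in the technically simpler setting where the inner and outer structure match. First, I would separate the two regimes in \eqref{sobembrelation}: the subcritical/supercritical cases (i), (ii) and the strictly gaining case (iii) with $p_0<p_1$, versus the endpoint case (iv) where $s_0=s_1$, $p_0=p_1$. For cases (i) and (ii) the sufficiency is standard: when $s_0-s_1>d/p_0-d/p_1>0$ one applies a Bernstein-type inequality on each Littlewood-Paley block (valid for Lorentz quasi-norms by a mild variant of the usual argument, using that the block has Fourier support in an annulus of radius $\sim 2^k$), which turns the $L^{p_0,r_0}$ norm of $\La_k f$ into its $L^{p_1,r_1}$ norm at the cost of $2^{kd(1/p_0-1/p_1)}$; the geometric gain $2^{k(s_0-s_1)}$ then dominates and one sums trivially using $\ell^\infty\hookrightarrow\ell^{q_1}$ after absorbing the slack (and when $p_0=p_1$, $s_0>s_1$, one uses instead the Lorentz inclusion $L^{p,r_0}\hookrightarrow L^{p,r_1}$ for $r_0\le r_1$ together with the same geometric summation).

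For case (iii) with $p_0<p_1$, the key input is part (iii) of Theorem \ref{FinBthm} already proved: $F^{s_0}_{q_0}[L^{p_0,r_0}]\hookrightarrow B^{s_1}_{\min\{r_0,q_1\}}[L^{p_1,r_1}]$ — more precisely, under \eqref{sobembrelation} and $p_0<p_1$ one has $F^{s_0}_{q_0}[L^{p_0,r_0}]\hookrightarrow B^{s_1}_{r_0}[L^{p_1,r_1}]$ (this is exactly the content of Theorem \ref{FinBthm}(iii) with $q_1=r_0$, where the condition $r_0\le q_1$ is met with equality). Then I would chain this with the elementary Besov-into-Triebel-Lizorkin inclusion $B^{s_1}_{\mu}[L^{p_1,r_1}]\hookrightarrow F^{s_1}_{q_1}[L^{p_1,r_1}]$ whenever $\mu\le\min\{p_1,r_1,q_1\}$, which reduces to the sequence-space fact $\ell^\mu\hookrightarrow L^{p_1,r_1}(\ell^{q_1})$ after pulling the $\ell^\mu$ sum outside (Minkowski's inequality in the form $\|\cdot\|_{L^{p_1,r_1}(\ell^{q_1})}\le\|\cdot\|_{\ell^\mu(L^{p_1,r_1})}$ for $\mu\le\min\{p_1,r_1,q_1\}$). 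Taking $\mu=r_0$ and using $r_0\le r_1$ (the hypothesis in (iii)) together with $r_0\le p_1$ (automatic since $r_0\le p_0<p_1$) and the freedom in $q_1$, one gets \eqref{BinBembed}; I should double-check the borderline when $r_0>q_1$, in which case one loses nothing because Theorem \ref{FinBthm}(iii) permits lowering to any $q_1\ge r_0$ and then the inclusion into $F^{s_1}_{q_1}$ needs $r_0\le q_1$, consistent. Case (iv) with $p_0=p_1$, $s_0=s_1$ reduces directly to the sequence-space embedding $L^{p,r_0}(\ell^{q_0})\hookrightarrow L^{p,r_1}(\ell^{q_1})$ for $r_0\le r_1$, $q_0\le q_1$: apply the quasi-norm to $\{2^{ks}\La_k f\}$ and use monotonicity of $\ell^q$ norms in $q$ together with $L^{p,r_0}\hookrightarrow L^{p,r_1}$; since the Littlewood-Paley projections are bounded on $L^{p,r}$ for $0<p<\infty$, the reverse estimate identifying $F$-norms is available.

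The necessity direction is where the real work lies, and I would organize it around the standard collection of test functions. Necessity of the constraint $d/p_0-d/p_1\le s_0-s_1$ (and of $p_0\le p_1$ in the critical line) follows from dilation/scaling considerations applied to a single bump whose frequency support sits in one annulus. Necessity of $r_0\le r_1$ when $p_0=p_1$ comes from testing against lacunary sums $f=\sum_j a_j \psi(2^{k_j}\cdot)$ with widely separated frequencies $k_j$, where the $F$-norms on both sides collapse to $\|\{a_j\}\|_{\ell^{q_i}}$ times a Lorentz norm of a fixed profile — but to isolate the $r$-dependence one instead takes a single-block function realizing a prescribed distribution function (a sum of characteristic functions of disjoint balls with dyadically varying heights and measures), so that the $F^{s}_q[L^{p,r}]$ norm of that one block is comparable to the $L^{p,r}$ norm of the function, and the known failure of $L^{p,r_0}\hookrightarrow L^{p,r_1}$ for $r_0>r_1$ transfers. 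Necessity of $q_0\le q_1$ in (iv) is the delicate point: I expect the main obstacle to be constructing, on the critical line $p_0=p_1$, $s_0=s_1$, a test family that genuinely distinguishes $q_0$ from $q_1$ without being contaminated by the Lorentz indices — the natural candidate is a "spread-out" function $f=\sum_{k=0}^N 2^{-ks}\La_k$-type building block supported so that the $N$ frequency annuli contribute to \emph{disjoint} spatial regions of comparable measure, forcing $\|f\|_{F^s_{q}[L^{p,r}]}\approx N^{1/q}$ up to Lorentz factors that cancel between the two sides; letting $N\to\infty$ forces $q_0\le q_1$. One must verify that on this example the outer Lorentz $(p,r)$-structure contributes the same power of $N$ to both sides, which requires choosing the spatial supports to have essentially identical distribution functions; this bookkeeping, rather than any deep inequality, is the crux. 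Throughout, I would lean on the review section \S\ref{reviewsect} for boundedness of $\{\La_k\}$ on $L^{p,r}$ and on the elementary Lorentz inclusions, and cite the relevant sequence-space lemmas established earlier for parts (iv)--(vi) of the main theorems.
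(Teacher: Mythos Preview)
There is a genuine gap in your sufficiency argument for case (iii). Your chain is $F^{s_0}_{q_0}[L^{p_0,r_0}]\hookrightarrow B^{s_1}_{r_0}[L^{p_1,r_1}]\hookrightarrow F^{s_1}_{q_1}[L^{p_1,r_1}]$, with the second step invoking the diagonal embedding $B^s_\mu[L^{p,r}]\hookrightarrow F^s_q[L^{p,r}]$ for $\mu\le\min\{p,q,r\}$. You write that $r_0\le p_1$ is ``automatic since $r_0\le p_0<p_1$'', but there is no hypothesis relating $r_0$ to $p_0$; the second Lorentz index is completely free. Likewise $r_0\le q_1$ fails in general, and your parenthetical attempt to repair this is circular. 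Concretely, take $p_0=1$, $p_1=2$, $r_0=r_1=3$, $q_1=1$: condition (iii) is satisfied, yet your second embedding would require $3\le\min\{2,1,3\}$. The paper avoids this by inserting an \emph{intermediate} point $(\widetilde s,\widetilde p)$ strictly between $(s_0,p_0)$ and $(s_1,p_1)$ on the critical line and chaining two off-diagonal embeddings, Jawerth then Franke, rather than one off-diagonal step followed by a diagonal one. Along the critical line with $p_0<\widetilde p$ and $\widetilde p<p_1$ each embedding imposes only the single constraint $r_0\le\widetilde q$, respectively $\widetilde q\le r_1$, with the remaining indices free; choosing $\widetilde q\in[r_0,r_1]$ then handles arbitrary $q_0,q_1$.

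Your proposed necessity example for $q_0\le q_1$ in (iv) is also misdirected. If the Littlewood--Paley pieces occupy \emph{disjoint} spatial regions, then at each point only one term in $\sum_k|2^{ks}\La_kf(x)|^q$ survives, and the $\ell^q$ sum collapses: the $q$-dependence disappears entirely, so you cannot separate $q_0$ from $q_1$ this way. The paper's example (\S\ref{q0q1necsect}) does the opposite: it puts all frequency pieces on the \emph{same} spatial profile, taking $\eta_l=\cF^{-1}[\chi(\cdot-2^le_1)]$ so that $|\eta_l|=|\cF^{-1}\chi|$ independently of $l$. Then $(\sum_l|a_l\eta_l|^q)^{1/q}=(\sum_l|a_l|^q)^{1/q}|\cF^{-1}\chi|$, the outer Lorentz norm factors out identically on both sides, and one is left with a pure $\ell^{q_0}$ versus $\ell^{q_1}$ comparison.
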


It is noteworthy that 
in  statements (iii) of Theorem \ref{BinBthm}, 
for 
the critical relation  \eqref{sobembrelation} and  $p_0<p_1$ 
the parameters $r_0,r_1$  in  the Besov-Lorentz embeddings 
can be chosen arbitrary. Likewise in 
Theorem \ref{FinFthm}, (iii),  
the parameters $q_0,q_1$ are arbitrary.
This result can be quickly derived from Theorems \ref{BinFthm} and  \ref{FinBthm} (see \S\ref{conclusion}) and extends results by Jawerth \cite{jawerth} for the Lebesgue space cases $p_0=r_0$, $p_1=r_1$.

\medskip

\noi{\it This paper.}
In \S\ref{reviewsect} we shall  review  basic facts on Lorentz spaces and related spaces 
$\ell^q(L^{p,r})$ and $L^{p,r}(\ell^q)$ 
for sequences of functions $f=\{f_k\}_{k=0}^\infty$.
 In \S\ref{meassmoothness} 
we also discuss various examples 
demonstrating the sharpness of the results; see in particular the overview in \S\ref{overview} for a guide where to find the proof of each necessary condition.
In \S\ref{vectvalemb}  we prove  embedding relations between 
$\ell^q(L^{p,r})$ and $L^{p,r}(\ell^q)$, for fixed $p,r$, which imply the sufficiency of the conditions in parts (iv)-(vi) of Theorems \ref{BinFthm} and \ref{FinBthm}.
In \S\ref{jawerthfrankesect}  
we give the proofs of the Lorentz improvements of the embedding theorems by  Franke and Jawerth
(i.e. parts (iii) of Theorems   \ref{BinFthm} and \ref{FinBthm}).
The proofs of sufficiency are concluded in \S\ref{conclusion}.

We  have formulated our results for  the homogeneous 
spaces  $B^s_q(L^{p,r})$, $F^s_q(L^{p,r})$, 
 but 
the proofs can be extended to cover  homogeneous versions 
$\dot B^s_q(L^{p,r})$ and $\dot F^s_q(L^{p,r})$, defined via the usual homogeneous dyadic frequency decompositions, (\cf. \cite{triebelhom}). 
The corresponding results for the  homogeneous variants then hold for the critical 
scale invariant embeddings.

In Appendix \ref{Hoersect} we discuss  some  classes of Fourier multipliers and state some open problems.
In Appendix \ref{appendix} we prove the above mentioned  result on the constant for the triangle inequality 
for $L^{p,r}$ when $p<1$, $r<\infty$.

\medskip

\noi{\it Acknowledgement.} We thank the referee of this paper for a careful reading and for pointing  out possible further directions: First,
the spaces considered here fit in the axiomatic approach by Hedberg and Netrusov \cite{hedberg-netrusov}.
 Secondly,  it might be  of interest to also  replace the sequence spaces  in our definitions by their Lorentz versions and settle analogous embedding questions.

\section{Review of  basic facts on Lorentz spaces}\label{reviewsect}
We review some basic facts about Lorentz spaces, and refer the reader to  \cite{besh}, \cite{BL}, \cite{hunt}, \cite{stw} for more information.
\subsection{\it Lorentz spaces via the distribution function}
Let
$(X, \mu)$ be a measure space.
For a measurable  function $f$ we let 
$$\mu_f(\alpha)= \mu(\{x\in X: |f(x)|>\alpha\}),$$
be the distribution function and 
$f^*(t)=\inf\{\alpha:\mu_f(\alpha)\le t\}$ be the nonincreasing rearrangement of $|f|$.
We shall assume  that $\mu$ is non-atomic (i.e. every set of positive measure has a subset of smaller positive measure).

For $0<p,r<\infty$, the standard quasi-norm on the Lorentz space $L^{p,r}$ is given by
\Be\label{qn}\|f\|_{p,r}= \Big (\frac rp\int [t^{1/p} f^*(t)]^r \frac {dt}t\Big)^{1/r},
\Ee moreover $\|f\|_{p,\infty}=\sup_t t^{1/p} f^*(t)$, $0<p<\infty$.
There is also an alternative description via the distribution function, namely
\Be\label{Lorentznorms}
\|f\|_{p,r}= \Big (r\int [\mu_f(\alpha) ^{1/p} \alpha]^r \frac {d\alpha}\alpha\Big)^{1/r},
\Ee and $\|f\|_{p,\infty}=\sup_\la \la \mu_f(\la)^{1/p}$.
One checks this for simple functions first, and then applies the monotone convergence theorem.
The analogue for the case $r=\infty$  is done in Stein-Weiss
\cite[p.191]{stw}, and the case $r<\infty$, for simple functions relies on similar summation by parts arguments. 

  For later use we state the usual embedding  for fixed $p$, namely   
 $L^{p,r}\hookrightarrow L^{p,q}$ for $r\le q$. In fact there is 
 the sharp inequality 
 \Be \label{embineq}\|f\|_{p,q}\le \|f\|_{p,r}, \quad 0<r\le q\le \infty\,.\Ee
 A proof using rearrangements is in Stein-Weiss \cite[p.192]{stw}, but the proof of \eqref{embineq} could also be based on \eqref{Lorentznorms}, \cf. Lemma \ref{embeddinglemma} in the appendix.

\subsection{\it Sequences of functions}  The study of function spaces crucially relies on the study of the sequence spaces
$L^{p,r}(\ell^q)$ and $\ell^q(L^{p,r})$. We shall work with the 
 quasi-norms
\begin{subequations}\label{seq-norms}
\begin{align}
\|f\|_{L^{p,r}(\ell^q)} &:= \Big\|\Big(\sum_k|f_k|^q\Big)^{1/q}\Big\|_{p,r},
\\
\|f\|_{\ell^q(L^{p,r})} &:= \Big(\sum_k\big\|f_k\big\|_{p,r}^q\Big)^{1/q}.
\end{align}
\end{subequations}
Throughout the paper the domains of the sequences will usually be  $\bbN\cup\{0\}$, or a subset of it,  but it could be  any finite or countable set with counting measure.

 \subsection{\it Powers} 
It will be convenient to use formulas for the distribution and rearrangement functions of $|f|^\sigma$,
for any $\sigma>0$, 
namely 
\Be
\mu_{|f|^\sigma}(\alpha)= \mu_f(\alpha^{1/\sigma})
\Ee
and 
\Be \label {rearrangef^r}
(|f|^\sigma)^*(t)= (f^*(t))^\sigma.
\Ee
These follow directly from the definition of distribution and rearrangement functions.
An immediate consequence is
\Be\label{lor-power}
\| |f|^\sigma \|_{L^{p/\sigma,r/\sigma}}= \|f\|_{L^{p, r}}^{\sigma}.
\Ee
 Moreover, for sequences of functions $f= \{f_k\}$,
 \begin{subequations} \label{powers-for-seqnorms}
 \begin{align}
 \big\|\{ |f_k|^\sigma\}\big\|_{L^{p/\sigma, r/\sigma}(\ell^{q/\sigma})}&= \|f\|^\sigma_{L^{p,r}(\ell^q)},
 \\
   \big\|\{ |f_k|^\sigma\}\big\|_{\ell^{q/\sigma} (L^{p/\sigma,r/\sigma})}
   &=\|f\|_{\ell^q(L^{p,r})}^\sigma\,.
   \end{align}
   \end{subequations}
 
 \subsection{\it Real interpolation} \label{interpolationpara} It is well known that the Lorentz spaces occur as real interpolation spaces for the real method, namely we have
 $(L^{p_0}, L^{p_1})_{\theta,r}  =  L^{p,r}$ if $p_0\neq p_1$, $0<\theta<1$ and  $(1-\theta)p_0^{-1}+\theta p_1^{-1}=p^{-1}$. This remains true for vector-valued spaces (in particular $\ell^q$-valued spaces), see \cite[\S5.2,\S5.6]{BL}. Finally 
 if $\|a\|_{\ell^q_s(A)}= (\sum_{k=0}^\infty  2^{ksq}  \|a_k\|_{A}^q)^{1/q}$ then  
 for vector-valued $\ell^q$ spaces one has
 $(\ell_{s_0}^{q_0}(A_0),\ell_{s_1}^{q_1}(A_1))_{\theta,q}=\ell^q_s((A_0,A_1)_{\theta,q})$ for $(s,q^{-1})=(1-\theta)(s_0,q_0^{-1})+\theta (s_1,q_1^{-1})$, see \cite[5.6.2]{BL}.  Examining e.g. arguments in \cite{triebel2}
 these observations can be used to extend some of the known characterizations of Besov and Triebel-Lizorkin spaces to their Lorentz-space based analogues.

\subsection{\it Sums}
The expression \eqref{qn} is not a norm unless $1\le r\le p$. 
It is well known that the spaces $L^{p,r}$  are normable for $p>1$ and $r\ge 1$;
one replaces $f^*$ by
the maximal function $f^{**}$ in the definition of the Lorentz spaces to get an equivalent expression which is a 
norm.
We write
\[\trn f\trn_{p,r} = \Big(\int_0^\infty t^{r/p} f^{**}(t)^r \frac{dt}{t}\Big)^{1/r}.\]
We also use  $\trn f\trn_{L^{p,r}(\ell^q)}$, $\trn f\trn_{\ell^q(L^{p,r})}$ for the expressions corresponding to \eqref{seq-norms}, but with the ${}^{**}$-functions.
See \cite{hunt} or \cite{besh}. The additivity property holds  when the measure space is nonatomic,  since 
 in these cases we have a triangle inequality for 
 \begin{align} \label{**fct} f\mapsto f^{**}(t)
=\frac 1t\int_0^t f^*(s) ds=
\sup_{E: \mu(E)\le t} \frac{1}t \int_E |f|d\mu\,.
\end{align}
See \cite{hunt} or \cite[ch.2]{besh}.
The true norms can be used to prove duality theorems; one identifies the dual of $L^{p,q}$, $1<p<\infty$, $1\le q<\infty$ with $L^{p',q'}$. This also works  on discrete spaces, with counting measure (see \cite[ch.2.4]{besh}).

If we formulate the triangle inequality with the original quasi-norms in \eqref{qn} we get
for nonatomic $\mu$, 
\Be\label{trianglep>1}
\Big\| \sum_k f_k\Big\|_{{p,r}}\le 
C_{p,r}\sum_k \big\|f_k\big \|_{{p,r}}, \text{ $1<p<\infty$, $r\ge 1$, }
\Ee
with $C_{p,r}=C_p=(1-p^{-1})^{-1}$. This is proved using the additivity of the functional in \eqref{**fct} in combination with Minkowski's and Hardy's inequalities
(\cite[p. 124]{besh}).  Lorentz \cite{lorentz2} showed that one can take $C_{p,r}=1$ for $1\le r\le p$. Barza, Kolyada and Soria \cite{bks} showed for $1<p<r$  that the best constant $C_{p,r}$ in 
\eqref{trianglep>1} is given by
$(p/r)^{1/r} (p'/r')^{1/r'}$.

One  can use \eqref{powers-for-seqnorms} and 
\eqref{trianglep>1} for the space $L^{p/u, r/u}$ to get
\Be\label{utrianglep>1}
\Big\| \sum_k f_k\Big\|_{{p,r}}\lc_{p,r,u} 
\Big(\sum_k \big\|f_k\big \|_{{p,r}}^u\Big)^{1/u}, \text{ 
$u<\min \{p,r,1\}$.}
\Ee
However this  can be improved in some cases.
The analogue of \eqref{trianglep>1} {\it fails} for $L^{1,r}$, $r>1$, (\cf. \cite{st-nw} for a weaker substitute) but there is  a different kind of triangle inequality for 
$p<1$, for the $p$th power of $\|\cdot\|_{p,r}$, which gives
\Be \label{ptrianglelem}  
\Big\|\sum_k f_k \Big\|_{{p,r}} \le C(p,r) \Big(\sum_k \|f_k\|_{{p,r}}^p \Big)^{1/p}, 
\quad p<1,\quad r\ge p;
\Ee here $C(p,r)\le  (\frac{2-p}{1-p})^{1/p}$.
This was proved 
for $r=\infty$  by Stein, Taibleson and Weiss  
\cite{sttw},  (see also  Kalton  \cite{kalton}, and  unpublished work of Pisier and Zinn mentioned in \cite{kalton}).  
It is easy to modify the proof in \cite{sttw} to cover the cases $p<r<\infty$ with the same  constant.
However for  $r=p$ one  can put of course put $C(p,p)=1$ which suggests that the behavior of $C(p,r)$ should improve for $r>p$ as $r$ decreases.
We shall prove such a result in Appendix \ref{appendix} and show that for $0<p<1$, $p<r<\infty$
\Be \label{Cpr} C(p,r) \le A^{1/p}  \Big( \frac 1{1-p}\Big )^{1/p-1/r} \Big(1 + \frac pr \log \frac{1}{1-p} \Big)^{1/p-1/r}
\Ee
and $A$ does not depend on $p$ and $r$.
 The precise behavior of $C(p,r)$ is not relevant for the results in this paper, but \eqref{Cpr}   should be 
 interesting in its own right. 
Note that the logarithmic term in \eqref{Cpr} vanishes as $r\to p+$ and as $r\to \infty$. 

\smallskip
{\it Open problem.} It would be interesting to get more precise information on $C(p,r)$, in particular one would like to know 
whether the logarithmic term in \eqref{Cpr} is necessary  for $p<r<\infty$.

\subsubsection{Computations of some lower bounds}\label{Lprcomputationslower}
Suppose  we are given  $b>0$ and sets $A_j$, indexed by $j\in \cZ\subset \bbZ$ such that
\begin{subequations}\Be\mu(A_j)\ge b \rho^j, \,\,\, j\in \cZ\Ee
for some $\rho>1$. Assume that, for a nonnegative sequence $\beta\in 
\ell^r(\bbZ)$,
\Be\label{lowerbdassuLpr}
|f(x)|\ge \sum_{j\in \cZ}\beta(j) \rho^{-j/p} \bbone_{A_j}(x)\,\, \text{ a.e.}\Ee
Then for $0<p<\infty$, $0<r\le\infty$,
\Be\label{Lprlowerbdconcl} \|f\|_{p,r}\gc  \Big(\sum_{j\in \cZ}|\beta(j)|^r\Big)^{1/r}\Ee
\end{subequations}
with the obvious sup-analogue for $r=\infty$. The implicit constants depend on $p,r$.

To prove the lower bound  observe that the distribution function satisfies 
$\mu_f(\beta(j) \rho^{-\frac{j+1}{p}})\ge \mu (A_j) > b\rho^{j-1}$ and therefore
$f^*(b\rho^{j-1}) \ge \beta(j)\rho^{-\frac{j+1}{p}}$ by definition of the rearrangement function.
This easily implies \eqref{Lprlowerbdconcl}, 
under the assumption \eqref{lowerbdassuLpr}.

\subsubsection{Computations of some upper bounds}\label{Lprcomputationsupper}
We now  replace $\cZ$ by $\bbZ$  and add the assumption
that  {\it $n\mapsto \beta(n) 2^{-n/p}$ is nonincreasing}. Assume that $\{F_n\}_{n\in \bbZ}$ is a sequence of measurable sets such that 
\begin{subequations}
\Be\mu(F_n)\le B \rho^n, \,\,\, n\in \bbZ,\Ee
for some $\rho>1$, $B>0$
and assume that, 
\Be |f(x)|\le \sum_{n\in \bbZ}  \beta(n) 2^{-n/p}\bbone_{F_n}(x)\,\, \text{ a.e.}\Ee
Then for $0<p<\infty$, $0<r\le \infty$,
\Be \label{upperbdassuLpr}
\|f\|_{p,r} \lc  \big\|\beta\|_{\ell^r(\bbZ)}.\Ee
\end{subequations}
To see this observe that $$\mu_f(\beta(n) 2^{-\frac{n-1}p})\le \mu(\cup_{j\le n} F_j) 
\le B\sum_{j\le n}\rho^j \le B(\rho-1)^{-1}\rho^{n+1}$$
and therefore
$ f^*(\frac{B}{\rho-1}\rho^{n+1}) \le \beta(n) \rho^{-\frac{n-1}p}$.
This easily implies \eqref{upperbdassuLpr}.

\section{Necessary conditions}
\label{meassmoothness}
  
  \subsection{\it Guide through this section} \label{overview}
  Many examples for embedding relations of spaces of Hardy-Sobolev type  have been discussed in the literature (e.g  \cite{taibleson}, \cite{sickel-tr}), and most of our examples are at least related to those earlier examples.
  
  The necessity of the condition $p_0\le p_1$, and the 
  necessity of the condition  $r_0\le r_1$ in the case 
        whenever $p_0=p_1$, in all four Theorems
  \ref{BinFthm},
    \ref{FinBthm},
      \ref{BinBthm},
        \ref{FinFthm},  is proved in \S\ref{p0lep1}. 
        The necessity of the condition $s_0-s_1\ge d/p_0-d/p_1\ge 0$ in all four theorems is proved in \S\ref{scalingexample}.

       Consider the case $s_0-s_1=d/p_0-d/p_1>0$ which is case (iii) in all four theorems.
The necessity of the condition $q_0\le r_1$ in Theorem \ref{BinFthm} (iii),
the necessity of the condition $r_0\le q_1$ in Theorem \ref{FinBthm} (iii),
the necessity of the condition $r_0\le r_1$ in Theorem \ref{BinBthm} (iii), and 
  the necessity of the condition $q_0\le q_1$ in Theorem \ref{FinFthm} (iii), are all proved in \S\ref{franke-jawerth-nec}.
  
 \subsubsection*{Necessary conditions in the case  $s_0=s_1$ and $p_0=p_1$} 
  In Theorem \ref{BinFthm} (iv), (v)
   the necessity of the condition $q_0\le p_1$ is shown in 
  \S\ref{spequality-nec-pq},
   the necessity of the condition $q_0\le q_1$ is shown in 
  \S\ref{q0q1necsect},
  and
   the necessity of the condition $q_0\le r_1$ is shown in 
  \S\ref{franke-jawerth-nec}.  In addition, for the  case $p_1=q_1<r_0$ in part (vi), we must have the strict inequality in $q_0<p_1$; this follows from \eqref{cexbf4} in \S\ref{spequality-nec-pr}.

In Theorem \ref{FinBthm} (iv), (v)
   the necessity of the condition $q_1\ge p_0$ is shown in 
  \S\ref{spequality-nec-pq},
   the necessity of the condition $q_0\ge q_1$ is shown in 
  \S\ref{q0q1necsect},
  and
   the necessity of the condition $q_1\ge r_0$ is shown in 
  \S\ref{franke-jawerth-nec}.  Moreover, for the  case $p_0=q_0>r_1$ in part (vi), we must have the strict inequality in $q_1>p_0$; this follows from \eqref{cexbf8} in \S\ref{spequality-nec-pr}.

  The necessity of the conditions $r_0\le r_1$ in Theorems \ref{BinBthm}, (iv), and  \ref{FinFthm},
  (iv),   is shown in \S\ref{p0lep1} (as already pointed out).
  The necessity of the conditions $q_0\le q_1$ in Theorems \ref{BinBthm}, (iv), and \ref{FinFthm},
  (iv),   is shown in \S\ref{q0q1necsect}.

\subsection{\it Preliminaries}\label{prelpsi}
In what follows we let $\psi_0$ be a $C^\infty$ function supported on $\{x:|x|\le 1/8\}$  such that
$\widehat \psi_0(\xi)\neq 0$ for $|\xi|\le 2$ and such that 
\Be\label{momentcond}\begin{aligned} 
\int \psi_0(x) dx=1,\quad &\int \psi_0(x)x_1^{\nu_1}\dots x_d^{\nu_d} dx =0, \\
&\text{ for multiindices $\nu$ with $\nu_i\ge 0$, $0<\sum_{i=1}^d {\nu_i}\le M$}.
\end{aligned}\Ee
Here we assume that  $M$  is large, specifically given $p,q,s $, the condition  $$M>|s|+ 100 d \max\{1, 1/p, 1/q\}$$ will certainly be sufficient for our purposes.
  Let \begin{subequations}\label{Lkdef}\begin{gather}\psi_k = 2^{kd}\psi_0(2^k\cdot)- 2^{(k-1)d}\psi_0(2^{k-1}\cdot), \quad k\ge 1,
  \\
  \cL_k f=\psi_k*f
  \end{gather}
  \end{subequations}
We can arrange $\psi_0$ so that $\psi_1$ satisfies
\Be\label{lowerbdonpsipsi}
\psi_1*\psi_1(x)\ge c \text { for } x\in I=(-\eps,\eps)^d
\Ee
for some fixed $\eps>0$.
We have (using Littlewood-Paley decompositions generated by dilates of compactly supported functions, aka  local means in \cite{triebel2}) 
\begin{subequations}\label{localmeanschar}
\begin{align}
&\|f\|_{B^s_q[L^{p,r}]}
\approx  \Big(\sum_{k=0}^\infty 2^{ksq}\|\cL_k f\|_{p,r}^q\Big)^{1/q},
\\&\|f\|_{ F^s_q[L^{p,r}]}\approx \Big \|\Big(\sum_{k=0}^\infty 2^{ksq} |\cL_k f|^q\Big)^{1/q}
\Big \|_{p,r}.
\end{align}
\end{subequations}
These equivalences  follow by modifying the  corresponding arguments  for the case  $p=r$ (\cf. \cite{triebel2}) and combining them with real interpolation arguments, cf. \S\ref{interpolationpara}. We omit the details.

We will repeatedly need the following straightforward lemma.
\begin{lemma} \label{momentlemma}
Let $W$ be a finite collection of  points in $\bbR^d$, with mutual distance at least $2^{-n}$. Define
$h(x)=\sum_{w\in W} \psi(2^n(x-w))$. Then 
 \Be\label{ptwLjh}
 \|\cL_j h\|_\infty \lc 
\begin{cases} 2^{(n-j)(M+1)} &\text{ if $j\ge n$},
\\ 2^{(j-n)(M+1-d)} &\text{ if $n\ge j$}.
\end{cases} \Ee
Moreover,
\begin{subequations} \Be\|\cL_j h\|_{p,r}\lc 2^{(n-j)M} (2^{-nd}\,\# W)^{1/p}, \quad \text{if $j\ge n$}.\Ee
and for $j=n$ we have the equivalence
\Be\|\cL_n h\|_{p,r} \approx (2^{-nd}\# W)^{1/p}.\Ee
For $j\le n$ let $W(j)$ be any maximal $2^{-j}$-separated subset of $W$. Then
\Be\|\cL_j h\|_{p,r}\lc 2^{(j-n) (M-d)} (2^{-jd}\,\# W(j))^{1/p},
\quad \text{if $j\le n$}.\Ee
\end{subequations}
\end{lemma}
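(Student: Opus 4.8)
The plan is to estimate $\cL_j h$ directly from the integral representation, exploiting the moment conditions \eqref{momentcond} on $\psi_0$ (hence on the $\psi_k$) together with the fact that the $w\in W$ are $2^{-n}$-separated, so the bumps $\psi(2^n(\cdot-w))$ have essentially disjoint supports once $\psi$ is compactly supported. For the pointwise bound \eqref{ptwLjh}: since $\psi_j=2^{jd}\psi_0(2^j\cdot)-2^{(j-1)d}\psi_0(2^{j-1}\cdot)$, and $\cL_j h(x)=\sum_{w\in W}\int \psi_j(x-y)\,\psi(2^n(y-w))\,dy$, we rescale the convolution $\psi_j*\big(\psi(2^n(\cdot-w))\big)$ to the convolution of a normalized bump at scale $2^{-j}$ against a normalized bump at scale $2^{-n}$. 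In the regime $j\ge n$ (output scale finer than input scale), the coarser bump $\psi_j$ sees the finer bump $\psi(2^n(\cdot-w))$; one Taylor-expands $\psi(2^n(y-w))$ in $y$ about $x$ and uses that $\psi_j$ annihilates polynomials up to degree $M$, gaining $2^{-(j-n)(M+1)}$ from the $(M+1)$st derivative of the finer bump (which is of size $2^{n(M+1)}$ relative to its $L^\infty$ normalization $2^{nd}$, netting $2^{(n-j)(M+1)}$ after the overall normalization — a short bookkeeping of powers of $2$). Only $O(1)$ of the $w$ contribute at any given $x$ because of the support sizes, so there is no loss from the sum. In the regime $n\ge j$ the roles reverse: $\psi(2^n(\cdot-w))$ is the coarser object, but it does not have moment cancellation; instead one expands $\psi_j(x-y)$ about $x$, uses $\int\psi_j=0$ and more generally that $\psi_j$ has vanishing moments, and pays attention to the extra $2^{jd}$ in front of $\psi_j$; this yields $2^{(j-n)(M+1-d)}$, the $-d$ coming precisely from that normalization factor.

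For the $L^{p,r}$ bounds I would combine the pointwise estimate with crude support information. When $j\ge n$: $\cL_j h$ is supported (up to rapidly decaying tails, absorbed into the large parameter $M$) in the $2^{-n}$-neighbourhood of $W$, a set of measure $\lesssim 2^{-nd}\,\#W$, and on it $\|\cL_j h\|_\infty\lesssim 2^{(n-j)(M+1)}$; hence $\|\cL_j h\|_{p,r}\lesssim \|\cL_j h\|_\infty\,\big(2^{-nd}\#W\big)^{1/p}$, which after absorbing one power of $2^{n-j}$ into the $M+1$ gives $2^{(n-j)M}(2^{-nd}\#W)^{1/p}$. Here one uses the elementary fact that for a function bounded by $A$ on a set of measure $V$ and zero elsewhere, $\|f\|_{p,r}\lesssim_{p,r} A\,V^{1/p}$ (immediate from the rearrangement description \eqref{qn} or from $f^*(t)\le A\bbone_{[0,V]}$). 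For the equivalence at $j=n$: the upper bound is the $j=n$ case of the previous display; for the matching lower bound I would use \eqref{lowerbdonpsipsi}, choosing $\psi=\psi_1$ so that $\psi_1*\psi_1\ge c$ on a fixed cube $I$, whence $|\cL_n h(x)|\gtrsim 1$ on a union of $\#W$ disjoint translated cubes of sidelength $\sim 2^{-n}$, i.e. $|\cL_n h|\gtrsim \bbone_E$ with $\mu(E)\gtrsim 2^{-nd}\#W$, so $\|\cL_n h\|_{p,r}\gtrsim (2^{-nd}\#W)^{1/p}$; this is exactly the lower-bound computation of \S\ref{Lprcomputationslower} with a single value of $j$. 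For $j\le n$: now $\cL_j h$ is spread over the $2^{-j}$-neighbourhood of $W$, but since $\psi_j$ has scale $2^{-j}$, points of $W$ within $2^{-j}$ of each other merge, so effectively $h$ "looks like" $\sum_{w\in W(j)}(\#\{w'\in W: |w'-w|\lesssim 2^{-j}\})\,\psi(2^n(\cdot-w))$; bounding the multiplicities by $(2^{-j}/2^{-n})^d=2^{(n-j)d}$ and using the $n\ge j$ pointwise estimate $2^{(j-n)(M+1-d)}$ per bump together with the support set of measure $\lesssim 2^{-jd}\#W(j)$, one lands at $2^{(j-n)(M-d)}(2^{-jd}\#W(j))^{1/p}$.

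The routine but slightly delicate part is the bookkeeping of the powers of $2$ coming from the three competing normalizations ($2^{jd}$ in $\psi_j$, $2^{nd}$-type normalization of the bumps, the Jacobian of rescaling), and making sure the Schwartz tails of $\psi_0$ are handled — but since $M$ is taken larger than any fixed power needed (the hypothesis $M>|s|+100d\max\{1,1/p,1/q\}$), all tail terms are dominated by the stated main terms. The genuine conceptual point, and the one I expect to be the only real obstacle, is the $j\le n$ estimate: one must correctly identify that the relevant cardinality is $\#W(j)$ of a maximal $2^{-j}$-separated subset and not $\#W$, which requires the merging/multiplicity argument above rather than a naive triangle inequality over all of $W$ (that would lose a factor $2^{(n-j)d}$). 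Everything else is a direct consequence of vanishing moments plus disjoint-support volume counting.
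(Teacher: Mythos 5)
Your overall architecture matches the paper's: pointwise bounds from moment conditions, a volume count over the essentially disjoint supports for the $L^{p,r}$ upper bounds, and \eqref{lowerbdonpsipsi} for the lower bound at $j=n$. Your direct use of $f^*(t)\le A\,\bbone_{[0,V]}(t)$ for a function bounded by $A$ on a set of measure $V$ is a perfectly good substitute for the paper's reduction to $p=r$ followed by real interpolation, and you correctly identify that for $j\le n$ one must pass to $W(j)$ and count multiplicities rather than apply a naive triangle inequality over $W$. The $j\ge n$ pointwise bound is also handled by the right mechanism (Taylor-expand the bump, use the vanishing moments of $\psi_j$), even though your coarse/fine labels are reversed: for $j\ge n$ it is $\psi_j$ that lives at the finer scale $2^{-j}$.

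There is, however, a genuine gap in the regime $j\le n$. You assert that $\psi(2^n(\cdot-w))$ ``does not have moment cancellation'' and propose to extract the decay $2^{(j-n)(M+1-d)}$ from the vanishing moments of $\psi_j$ alone. That cannot work: using only $\psi_j$'s moments means Taylor-expanding $h_w(x-y)$ in $y$, and the remainder is controlled by $\|D^{M+1}h_w\|_\infty\int |y|^{M+1}|\psi_j(y)|\,dy\approx 2^{(n-j)(M+1)}\ge 1$, which is useless when $j\le n$. The estimate in this regime genuinely requires the vanishing moments of $\psi$ itself; here $\psi=\psi_1=2^d\psi_0(2\cdot)-\psi_0$ satisfies $\int\psi_1(x)x^\nu dx=0$ for all $0\le|\nu|\le M$, and one expands $\psi_j(x-y)$ in $y$ about the center $w$, so that the polynomial terms die against $\int h_w(y)(y-w)^\nu dy=0$ and the remainder yields a per-bump bound $\lc 2^{(j-n)(M+1)}$; the $-d$ in \eqref{ptwLjh} then comes from the $O(2^{(n-j)d})$ bumps meeting a given point. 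If $\psi$ really had $\int\psi\neq0$ the claimed bound would be false: already for $W=\{0\}$ one has $\cL_jh(x)\approx\psi_j(x)\int h=O(2^{(j-n)d})$, far larger than $2^{(j-n)(M+1-d)}$ for large $M$. (A smaller slip: in the final $j\le n$ bookkeeping you multiply the full-sum pointwise bound $2^{(j-n)(M+1-d)}$, rather than the per-bump bound, by the multiplicity $2^{(n-j)d}$, which double-counts the loss of $d$; this is harmless since $M$ is huge, but the correct accounting is per-bump bound times multiplicity times volume factor.)
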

\begin{proof}
Note that the upper bounds  need to be proved only for $p=r$ as they follow then for all $r$ by real interpolation. Let $h_w=\psi(2^n(\cdot-w))$. 
The derivation is straightforward; we use the moment condition
on the convolution kernel $\psi_j$ for the operator $\cL_j$ to bound
$|\cL_j h_w(x)|\lc 2^{(n-j)(M+1)}$ for $j> n$. The corresponding $L^p$ bound follows as the supports of $\cL_j h_w$ are essentially disjoint.
When $j\le n$ we use the moment condition on $h_w$ to bound
$|\cL_j h_w(x)|\lc 2^{(j-n)(M+1)}$.  The bound for $\cL_j h$  follows since  $\cL_j h_w(x)$  is nonzero for at most $O(2^{(n-j)d})$ terms
(and one gets improvements for sparse $W$).
The corresponding $L^p$ bound is then an immediate consequence.

In order to obtain  the lower bound for $\|\cL_nh\|_{p,r}$ we use the assumption \eqref{lowerbdonpsipsi} to see that $|\cL_n h|\ge c$  on a set of measure $2^{-nd}\,\# W$.
\end{proof}

In what follows we shall denote by $B(x,\rho)$ the ball of radius $\rho$ centered at $x$.

  \subsection{\it Necessity of  $p_0\le p_1$,  and of $r_0\le r_1 $ in the case  $p_0=p_1$.}\label{p0lep1}

  Let $\psi_1 $ be as in \S\ref{prelpsi}, $e_1=(1,0,\dots,0\}$, and let $$f(x) = \sum_{n=1}^\infty a_n \psi_1(x-ne_1)\,.$$ 
  It is easy to see by \eqref{ptwLjh} and  Minkowski's inequality that for any $M$
  $$|\cL_k f(x)| \lc 2^{-kM} \sum_{n=1}^\infty |a_n| \bbone_{B(ne_1,1)}(x).$$
  This implies  $\|\cL_k f\|_{p,r}\lc 2^{-kM} \|a\|_{\ell^p}$.
  We also have
   \begin{align*} |\cL_1 f(x)| = \Big|
   \sum_{n=1}^\infty a_n \psi_1*\psi_1(x-ne_1)\Big|
   \ge 
   \sum_{n=1}^\infty |a_n| \bbone_{B(ne_1,\eps)}(x) 
   \end{align*} which implies the lower bound 
    $\|\cL_1 f\|_{p,r} \gc \|a\|_{\ell^{p,r}}$. 
    It follows that $p_0\le p_1$ in all cases.

The same calculation
proves the necessary condition $r_0\le r_1$ in the case $p_0=p_1$.

\subsection{\it Necessity of  $s_0-s_1\ge d(1/p_0-1/p_1)$.} \label{scalingexample}
Let $\chi\in C^\infty_c(\bbR^d)$ be supported in the ball of radius $10^{-2}$ centered at $1$.  Let $\beta_k$ be as in the definition of $\La_k$ in the introduction, so that
for $k\ge 1$ we have 
$\beta_k(\xi)=0$ when $2^{-k}|\xi|\notin [\frac 34, \frac 74]$ and 
$\beta_k(\xi)=1$ when $2^{-k}|\xi|\in [\frac 78, \frac 32]$. 
Let $\om_k= 2^{kd}\cF^{-1}[\chi](2^k x) $ and notice that
$\La_k\om_k=\om_k$ and $\La_j\om_k=0$ when $j\neq k$.
We have by scaling  $ \|\La_k\om_k\|_{p,r} = 2^{k (d-d/p )} \|\cF^{-1}[\chi]\| _{p,r}$ and thus any of the embeddings  in the four theorems in the introduction requires
$2^{k(s_1-d/p_1)}\lc 
2^{k(s_0-d/p_0)}$ 
for $k\ge 0$. Hence $s_0-s_1\ge d(1/p_0-1/p_1)$.

\subsection{\it Necessary conditions for the case $s_0-s_1= d(1/p_0-1/p_1)\ge 0$.}
\label{franke-jawerth-nec}


Let $R\gg 8$ be large and  let  $\{ n_l\}_{l=1}^\infty$  be an increasing  sequence of integers which is sufficiently separated, i.e. such that   $n_l\gg l\ge R$, $n_{l+1}-n_l\ge  R$.
 Let $\fN:=\{n_l: l\in \bbN\}$.
 Let  $\{a_l\}_{l=1}^\infty $ be a decreasing sequence for which $l\mapsto 2^{n_ld/p}|a_l|$ is increasing. Define
 $\Psi_{n}(x):=\psi_1(2^n(x-2^{-n} e_1))$, with $\psi_1$ as in \S\ref{prelpsi} and 
\Be\label{hgamma}h_\gamma(x)=\sum_{l=1}^\infty a_l 2^{n_l\gamma}\Psi_{n_l}(x).
\Ee
\begin{lemma}\label{secondexamplelemma}
Let $s\in \bbR$. If the separation constant $R$ 
in the definition of $\fN$ is 
sufficiently large then  
\begin{align}
\label{hgammaB}
\big\|h_{-s+d/p}\big\|_{B^s_q[L^{p,r_0}]} &\approx \|a\|_{\ell^q},
\\
\label{hgammaF}
\big\|h_{-s+d/p}\big\|_{F^s_\rho[L^{p,r}]} &\approx \|a\|_{\ell^r}.\, 
\end{align}
\end{lemma}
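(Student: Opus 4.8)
The plan is to compute both norms directly from the local means characterization \eqref{localmeanschar}, exploiting the separation of the frequencies $n_l$. Set $h := h_{-s+d/p}$, so that $h = \sum_l a_l 2^{-n_l s} 2^{n_l d/p} \Psi_{n_l}$, where each $\Psi_{n_l}$ is a single bump at scale $2^{-n_l}$. The key point is that $\cL_j \Psi_{n_l}$ is concentrated near frequency $2^{n_l}$: by the moment estimates behind Lemma \ref{momentlemma} (applied with $\# W = 1$, $n = n_l$), we have $\|\cL_j \Psi_{n_l}\|_\infty \lesssim 2^{-|j - n_l|(M+1-d)}$, and hence $\|\cL_j \Psi_{n_l}\|_{p,r} \lesssim 2^{-|j-n_l| M} 2^{-n_l d/p}$ with the reverse equivalence $\|\cL_{n_l}\Psi_{n_l}\|_{p,r} \approx 2^{-n_l d/p}$ when $j = n_l$, using \eqref{lowerbdonpsipsi}. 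Because $M$ is chosen large (larger than $|s| + 100d\max\{1,1/p,1/q\}$) and the $n_l$ are $R$-separated with $R$ large, the off-diagonal contributions decay geometrically and are summable against the main term.

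\emph{The Besov estimate \eqref{hgammaB}.} For each dyadic level $j$, write $\cL_j h = \sum_l a_l 2^{-n_l s} 2^{n_l d/p} \cL_j \Psi_{n_l}$. First I would establish the upper bound: by the triangle inequality \eqref{utrianglep>1} (or \eqref{ptrianglelem} when $p<1$) and the pointwise kernel bounds,
\[
2^{js}\|\cL_j h\|_{p,r_0} \lesssim \Big(\sum_l \big(|a_l|\, 2^{(j-n_l)s} 2^{-|j-n_l| M}\big)^{u}\Big)^{1/u}
\]
for a suitable small exponent $u$; taking $\ell^q(j)$ and using Young's inequality for convolution on $\bbZ$ (the kernel $j \mapsto 2^{(j - n_l)s - |j-n_l|M}$ being in $\ell^1$ with $M > |s|$) reduces the whole sum to $\|a\|_{\ell^q}$, since the $n_l$ are distinct and the map $l \mapsto n_l$ embeds into $\bbZ$. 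For the lower bound, I would test the level $j = n_l$: the term $a_l 2^{-n_l s} 2^{n_l d/p}\cL_{n_l}\Psi_{n_l}$ has $\|\cdot\|_{p,r_0} \gtrsim |a_l| 2^{-n_l s}$, while all the other terms $l' \neq l$ contribute at most a geometric error $\sum_{l' \neq l}|a_{l'}| 2^{-n_{l'} s} 2^{-|n_l - n_{l'}|M}$, which — using the monotonicity of $l \mapsto 2^{n_l d/p}|a_l|$ to control how $|a_{l'}| 2^{-n_{l'}s}$ compares to $|a_l| 2^{-n_l s}$, together with the $R$-separation — is bounded by a small constant times $|a_l| 2^{-n_l s}$. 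Hence $2^{n_l s}\|\cL_{n_l} h\|_{p,r_0} \gtrsim |a_l|$, and summing in $\ell^q$ gives $\|h\|_{B^s_q[L^{p,r_0}]} \gtrsim \|a\|_{\ell^q}$.

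\emph{The Triebel-Lizorkin estimate \eqref{hgammaF}.} Here the quantity is $\big\|(\sum_j 2^{js\rho}|\cL_j h|^\rho)^{1/\rho}\big\|_{p,r}$. The new feature is that the supports of the different $\Psi_{n_l}$ at their own scales are all near the point $2^{-n_l}e_1$, which are well-separated points; moreover for $j$ near $n_l$ the function $\cL_j \Psi_{n_l}$ is supported in a ball $B(2^{-n_l}e_1, C 2^{-n_l})$, and these balls are disjoint across $l$ once $R$ is large. So the square-function (i.e. $\ell^\rho$) sum over $j$ essentially splits as a \emph{disjoint} sum over $l$: on the ball attached to $n_l$, the dominant contribution comes from $j \approx n_l$ and equals $\approx |a_l| 2^{-n_l s} 2^{n_l d/p}\bbone_{B(2^{-n_l}e_1, c2^{-n_l})}$ up to the summable off-diagonal-in-$j$ tail. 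Applying the $L^{p,r}$ lower and upper bounds from \S\ref{Lprcomputationslower}--\S\ref{Lprcomputationsupper} to this near-disjoint layered function — with $\mu(A_l) \approx 2^{-n_l d}$, level heights $|a_l| 2^{-n_l s} 2^{n_l d/p} = |a_l| 2^{-n_l s} \cdot (2^{-n_l d})^{-1/p}$ matching exactly the normalization in \eqref{lowerbdassuLpr}/\eqref{upperbdassuLpr} — yields $\|h\|_{F^s_\rho[L^{p,r}]} \approx \|\{a_l 2^{-n_l s} \cdot 2^{n_l s}\}\|_{\ell^r} = \|a\|_{\ell^r}$, the $2^{-n_l s}$ from the coefficient cancelling the $2^{n_l s}$ weight; note $\rho$ disappears because only one $j$ per spatial region matters. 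The monotonicity hypotheses on $\{a_l\}$ are exactly what is needed to invoke the upper-bound computation of \S\ref{Lprcomputationsupper} (which requires $n \mapsto \beta(n)2^{-n/p}$ nonincreasing).

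\emph{Main obstacle.} The routine parts are the kernel decay estimates and Young's inequality bookkeeping. The delicate point is the \emph{lower} bounds: one must verify that the cross terms (different $l$, or different $j$ at a fixed spatial location) genuinely do not cancel the diagonal term, which is where both the large separation constant $R$ and the monotonicity of $l \mapsto 2^{n_l d/p}|a_l|$ and of $\{a_l\}$ are essential — $R$ large makes the geometric tails small, and the monotonicity prevents a single far-away large coefficient from swamping a small near one. Care is also needed to confirm that the spatial balls $B(2^{-n_l}e_1, C2^{-n_l})$ are pairwise disjoint: since $2^{-n_l}$ decreases geometrically and the bump $\Psi_{n_l}$ has support radius $\lesssim 2^{-n_l-3}$, consecutive balls are separated provided $R$ (hence $n_{l+1} - n_l$) is large enough, which I would make explicit at the start.
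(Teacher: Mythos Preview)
Your proposal is correct and follows the same approach as the paper: kernel decay from Lemma~\ref{momentlemma}, a diagonal-plus-error split for the lower bounds (with $R$ large absorbing the error), and the Lorentz computations of \S\ref{Lprcomputationslower}--\S\ref{Lprcomputationsupper} for the Triebel--Lizorkin estimate. One small correction to your heuristic: for the $F$-upper bound the relevant supports after summing the $j$-tail are \emph{nested} balls about the origin (of radius $\approx 2^{-n_l}$) rather than disjoint balls about $2^{-n_l}e_1$ --- this nested, monotone-height structure is exactly what \S\ref{Lprcomputationsupper} is designed for (and why the monotonicity of $l\mapsto |a_l|2^{n_ld/p}$ enters there), whereas your disjoint-ball picture is the correct one for the lower bound via \S\ref{Lprcomputationslower}.
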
 
As an immediate consequence we get
\begin{corollary} Suppose $s_0-s_1=d/p_0-d/p_1\ge 0$. Then we have the following implications
\begin{subequations}\begin{alignat}{2}
\notag
&B^{s_0}_{q_0}[L^{p_0,r_0}] \hookrightarrow B^{s_1}_{q_1}[L^{p_1,r_1}] &&\implies q_0\le q_1,
\\ \notag
&F^{s_0}_{q_0}[L^{p_0,r_0}] \hookrightarrow F^{s_1}_{q_1}[L^{p_1,r_1}] &&\implies r_0\le r_1,
\\ \notag
&B^{s_0}_{q_0}[L^{p_0,r_0}] \hookrightarrow F^{s_1}_{q_1}[L^{p_1,r_1}] &&\implies q_0\le r_1,
\\ \notag
&F^{s_0}_{q_0}[L^{p_0,r_0}] \hookrightarrow B^{s_1}_{q_1}[L^{p_1,r_1}] &&\implies r_0\le q_1.
\end{alignat}
\end{subequations}
\end{corollary}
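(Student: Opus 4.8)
The plan is to prove Lemma~\ref{secondexamplelemma} first, and then read off the Corollary by combining the two norm equivalences with the hypothesized embeddings. Before attacking the Lemma, let me record why the Corollary follows. Suppose $s_0-s_1 = d/p_0-d/p_1\ge 0$. In the first implication, we would apply the embedding to $f = h_{-s_0+d/p_0}$; note that $-s_0+d/p_0 = -s_1+d/p_1$, so the \emph{same} function is being tested on both sides, and \eqref{hgammaB} gives $\|f\|_{B^{s_0}_{q_0}[L^{p_0,r_0}]}\approx\|a\|_{\ell^{q_0}}$ and $\|f\|_{B^{s_1}_{q_1}[L^{p_1,r_1}]}\approx\|a\|_{\ell^{q_1}}$. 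Since $a$ ranges over all sequences for which $l\mapsto 2^{n_ld/p}|a_l|$ is increasing — and this monotonicity constraint does not restrict the $\ell^q$-behavior, because one can always insert a rapidly increasing geometric factor that is absorbed into both sides — the inequality $\|a\|_{\ell^{q_1}}\lesssim\|a\|_{\ell^{q_0}}$ for all such $a$ forces $q_0\le q_1$. The other three implications are identical, pairing \eqref{hgammaB}/\eqref{hgammaF} appropriately: $F\hookrightarrow F$ gives $r_0\le r_1$, $B\hookrightarrow F$ gives $q_0\le r_1$, and $F\hookrightarrow B$ gives $r_0\le q_1$.

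The core of the matter is therefore Lemma~\ref{secondexamplelemma}. The idea is that $h_\gamma$ is a superposition of bumps $\Psi_{n_l}$ concentrated near the origin at scales $2^{-n_l}$, with the frequencies $n_l$ so widely separated (gap $\ge R$) that the Littlewood--Paley pieces essentially decouple: $\cL_j h_\gamma$ is, up to the rapidly-decaying tails controlled by the moment estimates of Lemma~\ref{momentlemma}, equal to the single term $a_{l}2^{n_l\gamma}\cL_j\Psi_{n_l}$ whenever $j$ is near some $n_l$, and negligible otherwise. Concretely, I would write $\cL_j h_\gamma = \sum_l a_l 2^{n_l\gamma}\cL_j\Psi_{n_l}$ and use \eqref{ptwLjh} — with $W$ a single point and $n$ replaced by $n_l$ — to see $\|\cL_j\Psi_{n_l}\|_{p,r}\lesssim 2^{-|j-n_l|M'}2^{-\min(j,n_l)d/p}$ for an $M'$ comparable to $M$ and large; summing the geometric series in $l$ (using $|n_{l+1}-n_l|\ge R$ and taking $R$ large relative to $M'$ and the exponents $s,d/p,1/q,1/r$) shows the off-diagonal contributions are dominated by the diagonal one. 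With $\gamma = -s+d/p$, the diagonal term at $j=n_l$ contributes $\|a_l 2^{n_l(-s+d/p)}\cL_{n_l}\Psi_{n_l}\|_{p,r}\approx |a_l|2^{n_l(-s+d/p)}2^{-n_ld/p} = |a_l|2^{-n_ls}$, by the equivalence $\|\cL_n h\|_{p,r}\approx(2^{-nd}\#W)^{1/p}$ in Lemma~\ref{momentlemma} applied with a single point. Hence $2^{n_l s}\|\cL_{n_l}h_\gamma\|_{p,r}\approx|a_l|$, and inserting this into the local-means characterizations \eqref{localmeanschar} yields \eqref{hgammaB}: the $B$-norm is $(\sum_j 2^{jsq}\|\cL_jh_\gamma\|_{p,r}^q)^{1/q}\approx(\sum_l|a_l|^q)^{1/q}$.

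For \eqref{hgammaF} the argument is the same but one works with the function $g(x) = (\sum_j 2^{jsq}|\cL_j h_\gamma(x)|^q)^{1/q}$ inside a single $L^{p,r}$ quasi-norm. Because the frequencies $n_l$ are widely separated, the bumps $\cL_{n_l}\Psi_{n_l}$ live on essentially disjoint annular regions (the support of $\Psi_{n_l}$ has diameter $\sim 2^{-n_l}$, and these are nested but the \emph{dominant mass at level $n_l$} is where $|\cL_{n_l}\Psi_{n_l}|\gtrsim c$, a set of measure $\sim 2^{-n_l d}$ by \eqref{lowerbdonpsipsi}); so pointwise $g(x)$ is comparable to $\sum_l |a_l|2^{-n_l s}\bbone_{\{|\cL_{n_l}\Psi_{n_l}|\gtrsim c\}}(x)$ up to the same controllable tails, and one then invokes the lower-bound computation of \S\ref{Lprcomputationslower} and the upper-bound computation of \S\ref{Lprcomputationsupper} (with $\rho = 2^R$, the exponent $1/p$, and $\beta(l) = |a_l|2^{-n_l s} 2^{n_l d/p}\cdot 2^{-n_l d/p}$ arranged to give the $\ell^r$ norm, using exactly the monotonicity hypothesis that $l\mapsto 2^{n_l d/p}|a_l|$ is increasing, equivalently $l\mapsto \beta(l)\rho^{-l/p}$ is suitably monotone after reindexing) to conclude $\|g\|_{p,r}\approx\|a\|_{\ell^r}$.

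The main obstacle I anticipate is the bookkeeping in the two-sided estimate for \eqref{hgammaF}: one must verify that the sets on which the distinct bumps $\cL_{n_l}\Psi_{n_l}$ are comparably large are not merely disjoint but separated enough (in measure and in the ordering used by the rearrangement function) that the $L^{p,r}$ quasi-norm genuinely sees them as an $\ell^r$ sum rather than mixing them — this is precisely the role of the separation constant $R$, and it must be chosen \emph{after} $s,p,q,r$ are fixed, large enough to beat both the moment-kernel decay rate $M$ and the geometric growth $2^{n_l(\text{stuff})}$. The off-diagonal/tail estimates for the $B$-case \eqref{hgammaB} are routine geometric series once Lemma~\ref{momentlemma} is in hand; it is the interaction between the Lorentz quasi-norm and the multi-scale structure in \eqref{hgammaF} that requires care.
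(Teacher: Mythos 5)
Your proposal is correct and follows the paper's own route: the Corollary is obtained exactly as in the paper by testing the embedding on the single function $h_{-s_0+d/p_0}=h_{-s_1+d/p_1}$ and pairing \eqref{hgammaB} with \eqref{hgammaF}, and your sketch of Lemma~\ref{secondexamplelemma} (decoupling via the $R$-separation, the moment estimates of Lemma~\ref{momentlemma}, and the rearrangement computations of \S\ref{Lprcomputationslower}--\S\ref{Lprcomputationsupper}) is precisely the paper's argument. The only slip is a bookkeeping one in the $F$-case: the coefficient of $\bbone_{B(2^{-n_l}e_1,2^{-n_l}\eps)}$ in the dominant part of $\big(\sum_j 2^{jsq}|\cL_j h_\gamma(x)|^q\big)^{1/q}$ is $|a_l|2^{n_ld/p}$ (not $|a_l|2^{-n_ls}$), which is what makes \S\ref{Lprcomputationslower} with $\mu(A_l)\approx 2^{-n_ld}$ return $\|a\|_{\ell^r}$.
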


\begin{proof}[Proof of Lemma \ref{secondexamplelemma}]
Let $u<\min \{p,q,r_0\}$. By  \eqref{utrianglep>1} we have
\[\|h_\ga\|_{B^s_q[L^{p,r_0}]}\lc 
\Big(\sum_{j=0}^\infty 2^{jsq}\Big(\sum_{l=1}^\infty
\| a_l 2^{n_l\ga } 
\cL_j \Psi_{n_l}\Big\|_{p,r_0}^u\Big)^{q/u}\Big)^{1/q}.
\]
We use Lemma \ref{momentlemma} for a singleton $W$ to estimate 
for $\gamma=-s+d/p$, the right hand side in the last display by a constant times
\begin{align*}
& \Big(\sum_{j=0}^\infty 2^{jsq}\Big(\sum_{l=1}^\infty |a_l|^u 2^{n_l(\ga-d/p) u} 2^{-|j-n_l|(M-2d)u} \Big)^{q/u}\Big)^{1/q}
\\
&\lc \Big(\sum_{j=0}^\infty \Big(\sum_{l=1}^\infty |a_l|^u  2^{-|j-n_l|(M-2d-|s|)u} \Big)^{q/u}\Big)^{1/q}
\\&\lc
\Big(\sum_{j=0}^\infty \sum_{l=1}^\infty |a_l|^q 2^{-|j-n_l|q}\big)^{1/q}
\lc \Big(\sum_{l=1}^\infty |a_l|^q\Big)^{1/q};
\end{align*}
in this calculation  we have used $M-2d-|s|>1$.
We have proved  the upper bound in \eqref{hgammaB}.

For the lower bound we estimate
$$\|h_{-s+d/p}\|_{B^s_q[L^{p,r_0}]}\gc 
\Big(\sum_{k=1}^\infty 2^{n_k sq} \big\|\cL_{n_k} h_{-s+d/p}\|_{p,r_0}^q\Big)^{1/q}\ge
cI-CII$$ where
\begin{align*}
I&=\Big(\sum_{k=1}^\infty 
|a_k|^q 2^{n_k d/p} 
\|\cL_{n_k} \Psi_{n_k} \|_{p,r_0}^q \Big)^{1/q},
\\
II&=\Big(\sum_{k=1}^\infty 
2^{n_k sq} \Big\|\sum_{\substack {l\ge 1: \\l\neq k}} a_l 2^{- n_l s} 2^{n_l d/p} \cL_{n_k} \Psi_{n_l} \Big\|_{p,r_0}^q \Big)^{1/q}.
\end{align*} 
We have (using \eqref{lowerbdonpsipsi})
$2^{n_k d/p} 
\|\cL_{n_k} \Psi_{n_k} \|_{p,r_0}\ge c>0$ uniformly in $k$ and therefore 
$I\gc \|a\|_{\ell^q}$. The above computation for the upper bound also gives
 $II\lc 2^{-R} \|a\|_{\ell^q}$ and the lower bound in \eqref{hgammaB} follows if $R$ is chosen sufficiently large.
 
We now turn to the proof of \eqref{hgammaF}. For the upper bound we may assume without loss of generality  that $\rho<\min \{1,r,p\}$.
Then by Lemma \ref{momentlemma} 
\[\Big(\sum_{j=0}^\infty\Big|2^{js}\sum_{l=1}^\infty a_l 2^{n_l(\frac dp -s)} \cL_j\Psi_{n_l}(x) \Big|^\rho\Big)^{1/\rho}\le \cE_1(x)+\cE_2(x)\]
where
\begin{align*} 
&\cE_1(x)=\Big(\sum_{j=0}^\infty2^{js\rho} \sum_{\substack{l\in \bbN \\n_l\le j} }|a_l|^\rho  2^{n_l(\frac dp-s)\rho} 2^{-(j-n_l) (M+1)\rho} 
\bbone_{B(0,2^{1-n_l})} (x)\Big)^{1/\rho},
\\&\cE_2(x)= 
\Big(\sum_{j=0}^\infty 2^{js\rho} \sum_{\substack{l\in \bbN \\n_l> j}} |a_l|^\rho  2^{n_l(\frac dp-s)\rho} 2^{-(n_l-j)(M+1-d)\rho} 
\bbone_{B(0, 2^{1-j})} (x)\Big)^{1/\rho}.
\end{align*}

Interchanging the $n_l,j$ summations and summing a geometric series (where $M+1>|s|$) yields
\[ \cE_1(x) \le \Big(\sum_{l=1}^\infty |a_l|^\rho 2^{n_l \frac dp \rho} \bbone_{B(0,2^{1-n_l})}(x)\Big)^{1/p}\]
and, with the parameter $m=n_l-j$,
\Be \label{Etwo} \cE_2(x)\le \Big(\sum_{m=0}^\infty 2^{-m(M+1-d+s-\frac dp)\rho}
 \cE_{2,m} (x)^\rho\Big)^{1/\rho}\Ee
where
\[\cE_{2,m} (x) =\Big(\sum_{\substack {l\in \bbN:\\ n_l\ge m} }|a_l|^\rho 2^{(n_l-m) \frac dp \rho} \bbone_{B(0, 2^{1-n_l+m})}(x)  \Big)^{1/\rho}.\]
We use  \eqref{lor-power},  and \S\ref{Lprcomputationsupper} 
 with the parameter $\rho=2^{-d}$ and  with exponents $(p/\rho,r/\rho)$ in place of $(p,r)$, 
 to get
\[\|\cE_1\|_{p,r} =\|\cE_1^{\rho} \|_{p/\rho, r/\rho}^{1/\rho} \lc 
 \|a\|_{\ell^r}.
\]
Similarly  $\|\cE_{2,m}\|_{p,r}\lc \|a\|_{\ell^r} $ uniformly in $m$, and then  from \eqref{Etwo} we also get
$\|\cE_2\|_{p,r} \lc \|a\|_{\ell^r} $.

For the lower bound 
\begin{align*}&\Big(\sum_{j=0}^\infty\Big|2^{js}\sum_{l=1}^\infty a_l 2^{n_l(\frac dp -s)} \cL_j\Psi_{n_l} (x)\Big|^\rho\Big)^{1/\rho}
\\&\ge 
\Big(\sum_{k=1}^\infty\Big|2^{n_ks}\sum_{l=1}^\infty a_l 2^{n_l(\frac dp -s)} \cL_{n_k}\Psi_{n_l}(x) \Big|^\rho\Big)^{1/\rho}
\, \ge \cE_3(x)-\cE_4(x)\end{align*}
where
\begin{align*}
\cE_3(x)&= 
\Big(\sum_{k=1}^\infty\Big|a_k 2^{n_k\frac dp } \cL_{n_k}\Psi_{n_k}(x) \Big|^\rho\Big)^{1/\rho},
\\
\cE_4(x)&=
\Big(\sum_{k=1}^\infty\Big|2^{n_ks}\sum_{\substack{l\in \bbN:\\l\neq k}}  a_l 2^{n_l(\frac dp -s)} \cL_{n_k}\Psi_{n_l}(x) \Big|^\rho\Big)^{1/\rho}.
\end{align*}

Note that $ \cL_{n_k}\Psi_{n_k} (x)
=\psi_1*\psi_1(2^{n_k} x-e_1)$  and thus
$|\cL_{n_k}\Psi_{n_k} (x)| \ge c>0$ on 
$B(2^{-n_k}e_1, 2^{-n_k}\eps)$. Hence
\[ \cE_3(x) \ge c 
\Big(\sum_{k=1}^\infty\Big|a_k 2^{n_k\frac dp } 
\bbone_{B(2^{-n_k}e_1, 2^{-n_k}\eps)}(x)  \Big|^\rho\Big)^{1/\rho},
\]
which by \S\ref{Lprcomputationslower} implies $\|\cE_3\|_{p,r}\gc \|a\|_{\ell^r}$. Analyzing the proof of the upper bound and taking into account the  $R$-separation of the numbers $n_l$  yields $\|\cE_4\|_{p,r}\lc 2^{-R} \|a\|_{\ell^r}
$ and for large $R$ we get the lower bound in \eqref{hgammaF}.
\end{proof}

\subsection{\it The case $s_0=s_1=s$, $p_0=p_1=p$.}
\label{spequality-nec}

\subsubsection{Necessary conditions on $(p,q_0) $ and $(p, q_1)$ in Theorems \ref{BinFthm}, \ref{FinBthm}} \label{spequality-nec-pq}
Let $R\gg 8$ be large and  $\fN=\{n_l: l\in \bbN\}$ be as in \S\ref{franke-jawerth-nec},
i.e.  $l\mapsto n_l$ is increasing and $R$-separated.
\begin{lemma} \label{firstexamplelemma}
Let $a=\{a\}_{l=1}^\infty$ be a sequence such that  $l\mapsto |a_l|$ is nonincreasing. 
Let, for $\nu\in \bbZ^d$,
 $g_{l,\nu}(x) = 
 \psi(2^{n_l}(x-(l e_1+ 2^{-n_l}\nu)))\,$  and  let 
 \[g_{l} (x)=\sum_{\substack{2^{3}\le\nu_i\le 2^{n_l-3}\\ i=1,\dots,d} }g_{l,\nu}(x).\]
  Define
$f(x)=\sum_{l=1}^\infty   2^{- s n_l} a_l g_{l}(x).
$ Then 
\begin{align}
\label{Bupperboundforf}
\|f\|_{B^s_q[L^{p,r}] }&\approx \Big(\sum_{l=1}^\infty 
|a_l|^q\Big)^{1/q},
\\
\label{lprseqequivalence} 
\|f\|_{F^s_q[L^{p,r}]} &\approx 
\Big(\sum_{l=1}^\infty l^{\frac rp -1 }|a_l|^r \Big)^{1/r},
\end{align}
provided that the separation constant $R$ is large enough.
\end{lemma}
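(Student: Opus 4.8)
The plan is to compute the two norms directly from the local means characterization \eqref{localmeanschar} and Lemma \ref{momentlemma}, exploiting the geometric structure of $g_l$: it is built from $\psi(2^{n_l}(\cdot - w))$ over a grid $W_l$ of $\sim 2^{n_ld}$ points of mutual separation $2^{-n_l}$, contained in a fixed ball around $le_1$. The key quantitative input is that $\# W_l \approx 2^{n_ld}$ while the grid, dilated back, covers a region of measure $\approx 1$, and the distinct blocks $g_l$ live in disjoint balls $B(le_1, 1/2)$, so there is no interaction between different $l$ at the level of supports (the $R$-separation handles the frequency-side near-diagonal interaction).

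First I would establish \eqref{Bupperboundforf}. For fixed $l$, Lemma \ref{momentlemma} gives $\|\cL_j(2^{-sn_l}a_l g_l)\|_{p,r} \lc |a_l| 2^{-sn_l} 2^{-|j-n_l|(M-d)}(2^{-n_ld}\#W_l(j))^{1/p}$; since $\#W_l(j)\approx 2^{jd}$ for $j\le n_l$ and $\#W_l \approx 2^{n_ld}$, this is $\lc |a_l| 2^{-sn_l} 2^{-|j-n_l|(M-d-|s|)}2^{-(n_l-j)_+ d/p}$ after inserting the weight $2^{js}$, which sums (in $j$, with a $u$-triangle inequality \eqref{utrianglep>1} across $l$ to combine blocks) to $\lc (\sum_l |a_l|^q)^{1/q}$; the $R$-separation makes the off-diagonal terms a $2^{-cR}$-factor smaller. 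For the lower bound, test at $j=n_l$: by the last part of Lemma \ref{momentlemma}, $2^{sn_l}\|\cL_{n_l}(2^{-sn_l}a_l g_l)\|_{p,r}\approx |a_l|(2^{-n_ld}\#W_l)^{1/p}\approx |a_l|$, and subtract the off-diagonal contribution which is again $O(2^{-cR}\|a\|_{\ell^q})$, exactly as in the proof of Lemma \ref{secondexamplelemma}.

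For \eqref{lprseqequivalence} the new feature is the factor $l^{r/p-1}$, which comes from the Lorentz (rather than Lebesgue) measurement of a function whose level sets grow with $l$. Here is the point: on $B(le_1,1/2)$ the relevant quantity is $|\cL_{n_l}(2^{-sn_l}a_lg_l)|\approx |a_l|$ (up to the $2^{sn_l}$ weight) on a set of measure $\approx 1$; but these model sets, for $l=1,2,\dots$, have comparable measure while the values $|a_l|$ are nonincreasing, so the nonincreasing rearrangement of the $F$-square-function is roughly the step function equal to $|a_l|$ on the interval $[l-1,l)$ — wait, more precisely on intervals of length $\approx 1$ indexed by $l$, giving $\|\cdot\|_{p,r}^r \approx \sum_l \int_{l-1}^{l} t^{r/p} |a_l|^r \frac{dt}{t} \approx \sum_l l^{r/p-1}|a_l|^r$. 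So I would: (i) bound the $F$-square-function pointwise above and below by $\approx \sum_l |a_l| \bbone_{B(le_1,1/2)}$ using the diagonal term $j=n_l$ for the lower bound (via \eqref{lowerbdonpsipsi}) and the geometric-series estimates from Lemma \ref{momentlemma} plus \eqref{utrianglep>1} for the upper bound and the off-diagonal remainder; (ii) compute $\|\sum_l |a_l|\bbone_{B(le_1,1/2)}\|_{p,r}$ by the explicit rearrangement, i.e. apply the lower-bound computation of \S\ref{Lprcomputationslower} and the upper-bound computation of \S\ref{Lprcomputationsupper} with the sets $A_l = B(le_1,1/2)$ (so $\rho^l$ there is replaced by a quantity $\approx l$, which is the source of the arithmetic-rather-than-geometric weight $l^{r/p-1}$).

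The main obstacle is getting the lower bound in \eqref{lprseqequivalence} clean: one must check that the diagonal contributions $|a_l|\bbone_{B(le_1,1/2)}$ genuinely survive in the $L^{p,r}$-rearrangement — that the off-diagonal error $\cE_4$-type term is $O(2^{-cR})$ smaller in $L^{p,r}$ and does not cancel the main term — and, more subtly, that the sets $B(le_1,1/2)$ being disjoint with essentially equal measures forces the rearrangement to be the claimed step function rather than something smaller; this is precisely where monotonicity of $l\mapsto |a_l|$ and the computations of \S\ref{Lprcomputationslower}–\S\ref{Lprcomputationsupper} (applied after raising to a small power $\sigma$ via \eqref{lor-power} to reduce to the quasi-Banach-friendly range) are essential. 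The $B$-space estimate \eqref{Bupperboundforf} is routine given Lemma \ref{secondexamplelemma}'s method; I expect the write-up to devote most of its length to the careful Lorentz bookkeeping in \eqref{lprseqequivalence}.
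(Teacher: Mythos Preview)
Your proposal is correct and follows essentially the same strategy as the paper: for \eqref{Bupperboundforf} the paper uses exactly the bounds $\|\cL_j g_l\|_{p,r}\lc 2^{-|n_l-j|(M-d)}$ from Lemma~\ref{momentlemma} together with \eqref{utrianglep>1} for the upper bound and the diagonal/off-diagonal split $cI-CII$ for the lower bound; for \eqref{lprseqequivalence} the paper bounds the square function pointwise above by $G(x)=\sum_l|a_l|\bbone_{Q_l}(x)$ via the $L^\infty$ estimates \eqref{ptwLjh}, and below (after the $cI'-CII'$ split) by an analogous $G_{\text{low}}$, then computes $\|G\|_{p,r}$ by writing out the rearrangement directly. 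One small correction: the paper does \emph{not} invoke \S\ref{Lprcomputationslower}--\S\ref{Lprcomputationsupper} here, since those are tailored to sets of geometric measure $\rho^j$; instead it observes that because $|a_l|$ is nonincreasing and the $Q_l$ are disjoint of equal measure, $G^*$ is explicitly the step function $\sum_l|a_l|\bbone_{((l-1)2^{-d},\,l2^{-d}]}$, from which $\|G\|_{p,r}^r\approx\sum_l l^{r/p-1}|a_l|^r$ is immediate---your parenthetical remark already anticipates this adjustment.
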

Note that the expression of the right hand side of \eqref{lprseqequivalence} is equivalent to the $\ell^{p,r}$ norm of $a$ (if $a$ is nonincreasing).

The lemma  implies  the following corollary relevant for Theorems \ref{BinFthm} and \ref{FinBthm}.
\begin{corollary}
\begin{alignat} {2}
\notag
&B^s_{q_0}[L^{p,r_0}] \hookrightarrow F^s_{q_1} [L^{p,r_1}] \, &&\implies q_0\le  p.
\\
\notag
&F^s_{q_0}[L^{p,r_0}] \hookrightarrow  B^s_{q_1}[L^{p,r_1}] &&\implies p\le q_1.
\end{alignat}
\end{corollary}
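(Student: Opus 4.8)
The plan is to prove Lemma~\ref{firstexamplelemma} first, since the corollary is an immediate consequence: once \eqref{Bupperboundforf} and \eqref{lprseqequivalence} are known, the embedding $B^s_{q_0}[L^{p,r_0}]\hookrightarrow F^s_{q_1}[L^{p,r_1}]$ applied to the test function $f$ forces $\big(\sum_l l^{r_1/p-1}|a_l|^{r_1}\big)^{1/r_1}\lc \big(\sum_l|a_l|^{q_0}\big)^{1/q_0}$ for every nonincreasing nonnegative sequence $a$; the right side is $\|a\|_{\ell^{q_0}}$ and the left side is (equivalent to) $\|a\|_{\ell^{p,r_1}}\gc \|a\|_{\ell^{p,\infty}}$, and one sees by testing on $a_l=\bbone_{l\le N}$ that $N^{1/p}\lc N^{1/q_0}$, forcing $q_0\le p$. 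The reverse embedding $F^s_{q_0}[L^{p,r_0}]\hookrightarrow B^s_{q_1}[L^{p,r_1}]$ similarly gives $\|a\|_{\ell^{q_1}}\lc \|a\|_{\ell^{p,r_0}}$ on nonincreasing $a$; testing again on the characteristic sequence yields $N^{1/q_1}\lc N^{1/p}$, hence $p\le q_1$. (Here we use that the $\ell^{p,r}$ (quasi-)norm of a nonincreasing sequence is comparable to $\big(\sum_l l^{r/p-1}|a_l|^r\big)^{1/r}$, which is the content of the remark following the lemma and follows from the discrete analogue of \eqref{qn}.)

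For the lemma itself, the key point is that the function $g_l$ is a sum of $\asymp (2^{n_l})^d\cdot 2^{-dn_l}\cdot$(number of admissible $\nu$) translates of $\psi(2^{n_l}\cdot)$ — precisely, the $\nu$ range over a cube of side $\asymp 2^{n_l}$ inside a ball of radius $\asymp l$ around $le_1$, so $\#\{\nu\}\asymp 2^{n_l d}$ and the union of the supports has measure $\asymp 1$ (up to the bounded constant $l^0$... actually measure $\asymp l^d$ after rescaling; one must keep careful track that the $\nu_i$ run only up to $2^{n_l-3}$, so the points sit in $B(le_1,1)$ roughly, hence the relevant set has measure $\asymp 1$, not $l^d$). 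Wait — the factor $l^{r/p-1}$ on the right of \eqref{lprseqequivalence} must come from somewhere, so in fact the $\nu$-cube has side $\asymp 2^{n_l}$ but we should recount: the translates $le_1+2^{-n_l}\nu$ with $2^3\le\nu_i\le 2^{n_l-3}$ fill out a cube of side $\asymp 1$ centered near $le_1$, giving $\asymp 2^{n_l d}$ well-separated bumps at scale $2^{-n_l}$, so $\|\cL_{n_l}g_l\|_{p,r}\asymp (2^{-n_l d}\cdot 2^{n_l d})^{1/p}=1$ by the equality case in Lemma~\ref{momentlemma}. The $l$-dependence in \eqref{lprseqequivalence} then arises because the $g_l$ live on \emph{disjoint} regions (around $le_1$), so the superposition $\sum_l$ inside the $F$-norm stacks these unit-height plateaus side by side and the $L^{p,r}(\ell^q)$ quasi-norm of a sum of characteristic functions of $\asymp l$ disjoint unit sets weighted by $|a_l|$ is exactly the Lorentz-sequence expression $\big(\sum l^{r/p-1}|a_l|^r\big)^{1/r}$ via \S\ref{Lprcomputationslower}–\S\ref{Lprcomputationsupper}.

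Concretely, the steps are: (1) Using Lemma~\ref{momentlemma} together with the triangle inequality \eqref{utrianglep>1} (choosing $u<\min\{p,q,r\}$), bound $\|\cL_j g_l\|_{p,r}\lc 2^{-|j-n_l|(M-Cd)}(2^{-n_l d}\#\{\nu\text{-points in }W(j)\})^{1/p}$, and observe that for $j<n_l$ the maximal $2^{-j}$-separated subset of the $\nu$-grid still covers essentially the same unit region, so $(2^{-jd}\#W(j))^{1/p}\asymp 1$ while for $j\ge n_l$ one gets the full $\asymp 1$. This gives the upper bound in \eqref{Bupperboundforf} after summing the geometric series (using $M$ large and $R$-separation to kill cross terms from different $n_l$). (2) For the lower bound in \eqref{Bupperboundforf}, test at $j=n_k$, use \eqref{lowerbdonpsipsi} for the diagonal term $\|\cL_{n_k}g_k\|_{p,r}\gc 1$, and absorb the off-diagonal terms using $R$-separation exactly as in the proof of Lemma~\ref{secondexamplelemma}. (3) For \eqref{lprseqequivalence}, split $\big(\sum_j 2^{jsq... }|\cL_j(\cdot)|^\rho\big)^{1/\rho}\le \cE_{\mathrm{main}}+\cE_{\mathrm{tail}}$ as in the proof of Lemma~\ref{secondexamplelemma}; the main term is $\asymp \big(\sum_l |a_l|^\rho\,\bbone_{B(le_1,C)}\cdot(\text{height from the }2^{n_l d}\text{ bumps})\big)^{1/\rho}$, a sum of \emph{disjointly supported} plateaus of height $\asymp |a_l|$ on sets of measure $\asymp 1$, so by the disjointness and the computations in \S\ref{Lprcomputationslower}–\ref{Lprcomputationsupper} its $L^{p,r}$-norm is $\asymp\big(\sum_l l^{r/p-1}|a_l|^r\big)^{1/r}$; the tail is controlled by the geometric decay in $M$ and $R$-separation. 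The main obstacle is the careful bookkeeping in step~(3): one must verify that the overlapping regions $B(le_1,C)$ for different $l$ contribute a genuinely \emph{Lorentz}-type weight $l^{r/p-1}$ rather than an $\ell^r$ weight — this is exactly where the rearrangement enters, because the superposition of plateaus of heights $|a_1|\ge|a_2|\ge\cdots$ on nested-in-measure regions has rearrangement $\asymp\sum_l|a_l|\bbone_{[0,cl^d\cdot(\text{unit})]}$, and one invokes the discrete form of \eqref{qn} to identify the resulting integral with $\big(\sum_l l^{r/p-1}|a_l|^r\big)^{1/r}$. The remaining details (choice of $M$ depending on $s,p,q,\rho$; choice of $R$) are routine and parallel to Lemma~\ref{secondexamplelemma}, so I would say "we omit the details that are identical to the proof of Lemma~\ref{secondexamplelemma}" where appropriate.
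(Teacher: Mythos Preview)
Your strategy is the paper's: prove Lemma~\ref{firstexamplelemma} and deduce the corollary by testing on $a_l=\bbone_{l\le N}$. The derivation of the corollary from the lemma is correct, and your steps (1) and (2) for the Besov estimate match the paper's argument.

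Two corrections on step~(3). First, your rearrangement formula is wrong: the regions $Q_l\approx B(le_1,C)$ are \emph{disjoint} (as you correctly say), each of measure $\asymp 1$ independent of~$l$, so for nonincreasing $|a_l|$ the rearrangement of $G=\sum_l|a_l|\bbone_{Q_l}$ is the step function $G^*(t)\approx\sum_l|a_l|\bbone_{[(l-1)c,\,lc)}(t)$ on \emph{disjoint} intervals --- not $\sum_l|a_l|\bbone_{[0,\,cl^d]}$ on nested intervals (and there is no $l^d$: the sets do not grow with~$l$). The nested version would produce tail sums $\sum_{k\ge l}|a_k|$ on the $l$-th block, not $|a_l|$. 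With the correct step function the Lorentz integral $\int t^{r/p}G^*(t)^r\,dt/t$ yields $\sum_l l^{r/p-1}|a_l|^r$ directly; the machinery of \S\ref{Lprcomputationslower}--\S\ref{Lprcomputationsupper}, which is tailored to sets of geometrically varying measure $\rho^j$, is not the right tool here.

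Second, the paper's route to the upper bound in \eqref{lprseqequivalence} is simpler than your proposed $\cE_{\mathrm{main}}+\cE_{\mathrm{tail}}$ split: the pointwise bound \eqref{ptwLjh} from Lemma~\ref{momentlemma} already gives $\big(\sum_j 2^{jsq}|\cL_j f(x)|^q\big)^{1/q}\lc G(x)=\sum_l|a_l|\bbone_{Q_l}(x)$ in one line, since the geometric decay in $|j-n_l|$ collapses the $j$-sum and the spatial support of $g_l$ is already confined to~$Q_l$. No further decomposition is needed for the upper bound; the split is only used for the lower bound, exactly as in your step~(2).
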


\begin{proof}[Proof of Lemma \ref{firstexamplelemma}]
By Lemma \ref{momentlemma} 
\begin{subequations}
\begin{align} 
\label{boundgnlupper}
\|\cL_j g_{l}\|_{{p,r}} &\lc 2^{-|n_l-j| (M-d)},
\\
\label{boundgnlequiv}
\|\cL_{n_l} g_{l}\|_{{p,r}} &\approx 1.
\end{align}
\end{subequations}
We first establish the upper bound in \eqref{Bupperboundforf} and 
let 
 $u<\min\{p,q,r\}$. Estimate using \eqref{utrianglep>1}, \eqref{boundgnlupper} and then 
 H\"older's inequality 
\begin{align*}
\|f\|_{B^s_q[L^{p,r}] }
&\lc 
 \Big(\sum_{j=0}^\infty 2^{jsq}\Big(\sum_{l=1}^\infty
 |a_{l}|^u    2^{-sn_lu } \big\|\cL_j g_{l} \big\|_{{p,r}}^u
 \Big)^{q/u}
\Big)^{1/q} 
\\
&\lc \Big(\sum_{j=0}^\infty \Big(\sum_{l=1}^\infty |a_l|^u 2^{-(M-d-|s|) |j-n_l|u}\Big)^{q/u}\Big)^{1/q}
\\
&\lc 
 \Big(\sum_{j=0}^\infty \sum_{l=1} ^\infty
 |a_{l}|^q   2^{-|j-n_l| q} 
\Big)^{1/q} \lc 
\Big(\sum_{l=1} ^\infty
 |a_{l}|^q  
\Big)^{1/q} 
\end{align*}
where we used $M-d-|s|>1$.

For the lower bound in \eqref{Bupperboundforf} we have
\begin{align*}
\|f\|_{B^s_q[L^{p,r}] }
&\gc  \Big(\sum_{k=1}^\infty 2^{n_ksq} \Big\| a_k 2^{-s n_k} \cL_{n_k} g_{n_k} + \sum_{\substack{l\in \bbN:\\ l\neq k} }a_l  2^{-sn_l } \cL_{n_k} g_{n_l}\Big\|_{{p,r}}^q
\Big)^{1/q}\\&\ge c_I I- C_{II}  II
\end{align*}
where
\begin{align*}
I&= \Big(\sum_{k=1}^\infty |a_k|^q \|\cL_{n_k}g_{n_k}\|_{p,r}^q\Big)^{1/q},
\\
II&\le \Big(\sum_{k=1}^\infty 2^{n_ksq} \Big\|\sum_{\substack{l\in \bbN:\\ l\neq k} }a_l  2^{-sn_l } \cL_{n_k} g_{n_l}\Big\|_{{p,r}}^q
\Big)^{1/q}.
\end{align*}
By \eqref{boundgnlequiv} we get $I\gc\|a\|_{\ell^q}$. Using the argument for the 
upper bound given above and taking account \eqref{boundgnlupper} with $j=n_k\neq n_l$ yields $II\lc 2^{-R} \|a\|_{\ell^q}$. Thus if $R$ is large enough we obtain the lower bound in 
\eqref{Bupperboundforf}.

We now prove  the upper bound in \eqref{lprseqequivalence}.
By the $L^\infty$ bounds in Lemma \ref{momentlemma} we have
$$\Big(\sum_{j=0}^\infty 2^{-jsq}|\cL_j f(x)|^q\Big)^{1/q} \le C G(x)$$
where $G(x)=\sum_{l=1}^\infty |a_l| \bbone_{Q_l}(x)$,  with $Q_l = l+[-1/4,1/4]^d.
$
The rearrangement of $G$ satisfies
$$G^*(t) \le \sum_{l=1}^\infty |a_l|\bbone_{((l-1)2^{-d}, l 2^{-d}]}(t) $$ and we obtain
\begin{align*}
\|f\|_{F^s_q[L^{p,r}]}\lc \Big(\frac rp\int_0^\infty [t^{1/p}G^*(t)]^r\frac {dt}{t}\Big)^{1/r} \lc 
\Big(\sum_{l=1}^\infty l^{\frac rp-1}|a_l|^r\Big)^{1/r}.
\end{align*}

For the lower bound we estimate 
\begin{align*}
\|f\|_{F^s_q[L^{(p,r)}] }&\ge \Big\| \Big(\sum_{l} 2^{n_l sq} |\cL_{n_l} f|^q\Big)^{1/q} \Big\|_{L^{p,r}}
\,\ge cI' -C II'
\end{align*}
where 
\begin{align*}
I'&=  \Big\| \Big(\sum_{l=1}^N \big |a_l  \cL_{n_l} g_{n_l}\big |^q\Big)^{1/q} 
\Big\|_{p,r},
\\
II'&=  \Big\| \Big(\sum_{l=1}^N 2^{n_l sq}\Big | \sum_{\substack{k\ge 1:\\k\neq l}} a_k 2^{-sn_k  } \cL_{n_l} g_{n_k}\Big |^q\Big)^{1/q} \Big\|_{p,r}.
\end{align*}

Let $\eps$ be as in
\eqref{lowerbdonpsipsi} and
$J_{l,\nu,\eps}=
\{x:|x-le_1-\nu 2^{-n_l}|\le 2^{-n_l}\eps\}$. Let $J_{l,\eps}$ be the union of the $J_{l,\nu,\eps}$ over all $\nu\in \bbZ^d$ with 
$8\le \nu_i\le 2^{n_l-3}$ for $i=1,\dots, d$. Notice that 
$J_{l,\eps}$ is contained in a cube of sidelength $1/2$ centered at $le_1$. 
By the condition \eqref{lowerbdonpsipsi} we have 
$|\cL_{n_l} g_{n_l,\nu}(x)| \ge c $ for $x\in J_{l,\nu,\eps}$.
We have
$$\Big(\sum_{l=1}^N \big |a_l \cL_{n_l} g_{n_l}(x)\big |^q\Big)^{1/q} \ge G_{\text{low}}(x)$$
where
$$G_{\text{low}}(x) = \sum_{l=1}^N |a_l| \sum_{\nu=2^3}^{2^{n_l-3}} \bbone_{J_{l,\nu, \eps}}(x).$$
Note that  the measure of $J_{l,\eps} $ is at least $c_0 \eps^d$ for some fixed positive $c_0$. 
Hence 
$$G_{\text{low}}^*(t) \ge \sum_{l=1}^N |a_l|\bbone_{(c_0\eps^d(l-1), c_0 \eps^d l ]}(t) $$ and thus
\[I'\ge \|G_{\text{low}}\|_{L^{p,r}}\ge c'\Big(\sum_{l=1}^N l^{\frac rp-1} |a_l|^r\Big)^{1/r}.\]

For $II'$  we get a better upper bound. By the argument for the upper bound above we 
obtain due to  separateness condition of the $n_l$
\[ II' \le C2^{-R} \Big(\sum_{l=1}^N l^{\frac rp-1} |a_l|^r\Big)^{1/r}.\] Thus if  $R$  in our  definition of the $n_l$ is chosen large enough
we get the lower bound
\[\|f\|_{F^s_q[L^{p,r}]} \ge c'' (\sum_{l=1}^N l^{\frac rp-1} |a_l|^r)^{1/r}\]
provided that the right hand side is finite.
\end{proof}




\subsubsection{Conditions on $q_0$, $q_1$ when $p_0=p_1$, $s_0=s_1$.}\label{q0q1necsect}
Let $\chi\in C^\infty_c(\bbR^d)$ be supported in $\{\xi:|\xi|\le 2^{-4}\}$ such that $\chi (0)=1$, and $\widehat \chi(0)=1$. Let, given  $N\in \bbN$,
$f(x)=\sum_{l=N+1}^{2N} 2^{-ls} a_l \eta_l(x)$ with
  $\eta_l= \cF^{-1}[\chi(\cdot-2^l e_1)]$.
Using the support properties of $\beta_k$ (\cf. \S\ref{scalingexample})
we have, for $l> 1$,  $\La_l\eta_l=\eta_l$ and $\La_k\eta_l=0$ for $k\neq l$. Hence for large $N$
\begin{align*}
\|f\|_{B^s_{q_0}[L^{p,r}]}&\approx 
\Big(\sum_{l=N+1}^{2N} |a_l|^{q_0}\Big)^{1/q_0},
\\
\|f\|_{F^s_{q_1}[L^{p,r}]}&\approx 
\Big(\sum_{l=N+1}^{2N} |a_l|^{q_1}\Big)^{1/q_1}.
\end{align*} 
This immediately yields that  every  of the embeddings
$B^s_{q_0}[L^{p,r_0}] \hookrightarrow F^s_{q_1}[L^{p_1,r_1}]$,
$F^s_{q_0}[L^{p,r_0}] \hookrightarrow B^s_{q_1}[L^{p_1,r_1}]$,
$B^s_{q_0}[L^{p,r_0}] \hookrightarrow B^s_{q_1}[L^{p_1,r_1}]$,
$B^s_{q_0}[L^{p,r_0}] \hookrightarrow B^s_{q_1}[L^{p_1,r_1}]$ implies that  $q_0\le q_1$.


\subsubsection{Necessary conditions on $(p,r_0) $ and $(p, r_1)$ in Theorems \ref{BinFthm}, \ref{FinBthm}, (vi)}
\label{spequality-nec-pr}
We now show 
\begin{alignat}{2}\label{cexbf4}
& B^s_p[L^{p,r_0}] \hookrightarrow F^s_p[L^{p,r_1}] &&\implies r_0\le p.
\\ \label{cexbf8}
&F^{s}_{p}[L^{p,r_0}] \hookrightarrow  B^s_p[L^{p,r_1}] &&\implies p\le r_1.
\end{alignat}
For both implications we choose large parameters $R, N\in \bbN$ and  use as a test function
\begin{subequations}\label{fwithalog} 
\Be 
f(x)=\sum_{k=N+1}^{2N} 2^{-sR k} \sum_{l=0}^{4N} (1+|R k-R l|)^{-\frac 1p} (\log(2+|R k-R l |))^{-\delta} f_{k,l}(x),\Ee
where \Be f_{k,l}(x)= \cF^{-1}[\chi(\cdot-2^{R k} e_1)](x-R le_1),
\Ee
with $\chi$ as in \S\ref{q0q1necsect}.
\end{subequations}

The parameter $\delta$ will be suitably 
chosen depending on the parameters $r_0,p$ or $r_1,p$ in \eqref{cexbf4}, \eqref{cexbf8}, respectively.
Let $\eps>0$ such that
$$|\cF^{-1}[\chi] (x)|\ge c>0 \text{  for }\, x\in [-\eps,\eps]^d.$$ 
We  have  $\La_{R k}  f_{k,l}=  f_{k,l} $,  and 
$\La_{j}  f_{k,l}= 0$ when $j\neq R k$.
Let $\eta= \cF^{-1}[\chi]$ then $f_{k,l} (x)=\eta(x-R l e_1) e^{2\pi i  2^{R k} x_1}$. Hence
\begin{multline*} 
2^{s R k}| \Lambda_{R k} f(x)| = 
\Big|\sum_{l\in \bbZ} (1+|R k-R l|)^{-\frac 1p} (\log(2+R|k-l|))^{-\delta} \eta(x-R l e_1)\Big| \\\text{ if } N+1\le k\le 2N
\end{multline*} and 
$2^{sj}| \Lambda_{j} f(x)| =  0 $ if $j\notin R \bbN$ or if $j\notin  [R(N+1),2RN].$

\begin{proof}[Proof of \eqref{cexbf4}]
We argue by contradiction and assume  $r_0>p$. Choose $\delta$ so that $1/r_0<\delta<1/p$
and let  $f$ be as in
\eqref{fwithalog}.  We then have
for fixed $k$ that
$$\Big\|  \sum_{l\in \bbZ} (1+|R k-R l|)^{-\frac 1p} (\log(2+R|k-l|))^{-\delta}
 \eta(\cdot-R l e_1)\Big\|_{p,r_0}\lc 1$$ and hence 
 \Be\label{fBupp}\|f\|_{B^s_p(L^{p,r_0})} \lc N^{1/p}.\Ee
 
To derive a lower bound for  $\|f\|_{F^s_p [L^{p,\infty}]} $ we let
$$\cU_\eps=\{x: |x-k_0Re_1 |\le \eps \text{ for some $k_0\in [5N/4, 7N/4]$}\}.$$
Fix $x\in \cU_\eps$.  Then
$(\sum_j|2^{js} \Lambda_j f(x)|^p)^{1/p}\ge c_1 I(x)-c_2II(x)$ where
\[I(x)= c\Big(\sum_{k=N+1}^{2N}(1+|Rke_1-x|)^{-1} (\log(2+|Rke_1-x|))^{-\delta p}\Big)^{1/p}\]
and $II(x)=$ \begin{multline*}
C\Big(\sum_{k=N+1}^{2N}\Big|\sum_{l\neq k} (1+R|k-l|)^{-1/p} (\log(2+R|k-l|))^{-\delta}
\eta(x-{R l}e_1)\Big|^p\Big)^{1/p}.
\end{multline*}
Now 
$I(x)\gc (R^{-1} \sum _{2\le l\le N/4} l^{-1}  (\log l)^{-\delta p} )^{1/p}
\gc R^{-1/p}
(\log N)^{\frac{1-\delta p}{p}}$, as $\delta p<1$. Using the decay of $\eta$ we also get
$II(x)\lc C_{N_1} R^{-N_1} (\log N)^{\frac{1-\delta p}{p}}$ for any $N_1>0$.  Hence for $R$ large we see that the measure of the subset of $\cU_\eps$  where $(\sum_j|2^{js} \Lambda_j f(x)|^p)^{1/p}\ge  
c(\log N)^{\frac{1-\delta p}{p}}$  is bounded below by  times $c\eps^d N$. Hence
\Be \label{fFlow}\|f\|_{F^s_p(L^{p,r_1})}
\gc \|f\|_{F^s_p(L^{p,\infty})}
\gc_\eps  N^{1/p} (\log N)^{\frac{1-\delta p}{p}}.\Ee
Comparing \eqref{fBupp} and \eqref{fFlow}, and choosing $N$ large, we get a  contradiction. Hence $r_0\le p$.
\end{proof}
\begin{proof}[Proof of  \eqref{cexbf8}]
Again we argue by contradiction and assume  $r_1<p$. Let $\delta$ be such that
 $1/p<\delta<1/r_1$ and let  $f$ be as in
\eqref{fwithalog}. 
Since $\delta>1/p$ we have, for any $M_1$,
$$\Big(\sum_{ k} 2^{R k s p} | \La_{R k} f(x)|^p\Big)^{1/p} \le C_{M_1}\begin{cases} 1 &\text{ if } -N\le x\le 2N
\\
(1+|x|)^{-M_1} &\text{ otherwise.}
\end{cases}
$$
This gives  
\Be
\label{Lpr0ellpupp}
\Big\|\Big(\sum_{k} 2^{R k s p} | \La_{R k} f|^p\Big)^{1/p} \Big\|_{p,r_0} \lc N^{1/p}, \quad r_0>0.
\Ee
On the other hand we claim that 
\Be \label{lowerbdk} 2^{R k s} \|  \La_{R k} f\|_{p,r_1} \gc (\log N)^{-\delta+1/r_1},\quad
5N/4\le k\le 7N/4.
\Ee
Let $\cV_{\eps,k,j}= \cup_{2^{j-1}\le k-l\le 2^j} B(Rle_1, \eps)$, for $j$ with $0<2^j\le N/4$.
We have
\begin{align*}2^{Rks}|\La_{Rk} f(x)|&= \Big|\sum_{l=0}^{4N}
(1+R|k-l|)^{-1/p} \log (2+R|k-l|)^{-\delta} \eta(x-Rle_1)\Big|
\\& \ge c_1 I_k(x)-C_1 II_k(x), \quad\text{ with}
\end{align*}
\begin{align*}
I_k(x)&= \sum_{0<2^j< N/4}(1+R 2^j)^{-1/p} \log (2+R2^j)^{-\delta} \bbone_{\cV_{\eps,k,j}}(x),
\\
II_k(x) &= \sum_{\substack{0\le l\le 4N\\ |Rle_1-x|\ge R/2}} 
\frac{\log(2+R|k-l|)^{-\delta}}{
(1+R|k-l|)^{1/p} }
 R^{-N_1}|x-Rl e_1 |^{-N_1}.
\end{align*}
It is immediate that $\|II_k\|_p \lc  R^{-N_1}$, for any $N_1$, 
 and by interpolation also 
$\|II_k\|_{p,r_1} \lc  R^{-N_1}$.

Notice that  $\meas(\cV_{\eps, k, j})\ge c\eps^d 2^j$.
For the rearrangement of $I_k$ we have
$$I_k^*(t) \ge c \sum_{0\le 2^j\le N/8} 2^{-j/p}R^{-1/p} [\log(R2^j)]^{-\delta}(t) 
\bbone_{[0,c\eps^d 2^j]}(t)$$ and thus
\begin{align*}
\|I_k\|_{p,r_1} &\ge \Big(\sum_{0<2^j<N/8} \int_{c\eps 2^{j-1}}^{c\eps 2^j} \big[
(R2^{j})^{-1/p} 
(\log(R2^j))^{-\delta} \big]^{r_1} t^{r_1/p}\frac{dt}{t}\Big)^{1/r_1}
\\ &\ge R^{-1/p} (\log N)^{\frac{1-\delta r_1}{r_1}}.
\end{align*}
The two estimates for $\|I_k\|_{p,r_1} $ and $\| II_k \|_{p,r_1}$ imply \eqref{lowerbdk}. Then also
\Be\label{fBlowbd} 
\|f\|_{B^s_p(L^{p,r_1})} \gc R^{-1/p} N^{1/p}(\log N)^{-\delta+1/r_1}.\Ee 
Comparing \eqref{Lpr0ellpupp} and \eqref{fBlowbd}, and choosing  $N$ large,  we  get a contradiction when $\delta<1/r_1$. This means we must have $r_1\ge p$ in \eqref{cexbf8}.
\end{proof}



\section{Sequences of vector-valued functions}\label{vectvalemb}
In order to prove the  positive results in parts (iv)-(vi) of Theorems \ref{BinFthm} and  \ref{FinBthm} we derive  corresponding embeddings  for spaces of sequences 
$\ell^{q}(L^{p,r})$ and $L^{p,r}(\ell^q)$, for fixed $p,r$. 

\begin{proposition}
 \label{btlprepthm}
Let $0<p<\infty$, $0< q_0,q_1, r_0, r_1\le \infty$ 
and assume $q_0\le\min\{p,q_1,r_1\}$, $r_0\le r_1$.
The embedding 
$$\ell^{q_0}(L^{p,r_0})\hookrightarrow L^{p,r_1}(\ell^{q_1})$$
holds in each of the following three cases:

(i) $p\neq q_1$,  (ii) $p=q_1\ge r_0$,  (iii) $q_0<p=q_1<r_0$.

\end{proposition}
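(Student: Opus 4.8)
The plan is to reduce, by the monotonicity embeddings \eqref{embineq} (namely $L^{p,r_0}\hookrightarrow L^{p,r_1}$) and $\ell^{q_0}\hookrightarrow\ell^{q_1}$, together with the homogeneity identities \eqref{lor-power} and \eqref{powers-for-seqnorms}, to the two kinds of triangle inequalities for Lorentz spaces \eqref{trianglep>1} and \eqref{utrianglep>1} announced in the introduction. The identities \eqref{lor-power}, \eqref{powers-for-seqnorms} enter in the form: raising a claimed embedding to the power $\sigma>0$ replaces the parameters $(p,r_0,r_1,q_0,q_1)$ by $(p/\sigma,r_0/\sigma,r_1/\sigma,q_0/\sigma,q_1/\sigma)$, so that choosing $\sigma=q_0$ turns the left-hand $\ell^{q_0}$-norm into an $\ell^1$-norm, while choosing $\sigma<p$ turns $p$ into an exponent $>1$.

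\emph{The main range $q_0<p$} (all of (iii), and the parts of (i), (ii) with $q_0<p$). Since $r_0\le r_1$ and $q_0\le q_1$, it suffices to prove $\ell^{q_0}(L^{p,r_1})\hookrightarrow L^{p,r_1}(\ell^{q_0})$, because then
\[
\ell^{q_0}(L^{p,r_0})\hookrightarrow\ell^{q_0}(L^{p,r_1})\hookrightarrow L^{p,r_1}(\ell^{q_0})\hookrightarrow L^{p,r_1}(\ell^{q_1}).
\]
Raising this embedding to the power $q_0$ and invoking \eqref{powers-for-seqnorms} and \eqref{lor-power}, it becomes the assertion that $\big\|\sum_k g_k\big\|_{p/q_0,\,r_1/q_0}\le C\sum_k\|g_k\|_{p/q_0,\,r_1/q_0}$ for arbitrary nonnegative $g_k$, which is precisely the triangle inequality \eqref{trianglep>1} for the space $L^{p/q_0,\,r_1/q_0}$: its first exponent $p/q_0$ is $>1$ since $q_0<p$, and its second exponent $r_1/q_0$ is $\ge1$ since $q_0\le r_1$.

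\emph{The boundary $q_0=p$.} Here $q_1\ge q_0=p$. If $q_1=p$ (so we are in (ii) and $r_0\le p$), raising the desired embedding to the power $p$ reduces it, via \eqref{powers-for-seqnorms}, to $\big\|\sum_k g_k\big\|_{1,\,r_1/p}\le C\sum_k\|g_k\|_{1,\,r_0/p}$ for nonnegative $g_k$; since $r_1/p\ge1\ge r_0/p$ this follows from
\[
\Big\|\sum_k g_k\Big\|_{1,\,r_1/p}\le\Big\|\sum_k g_k\Big\|_{1}\le\sum_k\|g_k\|_{1}\le\sum_k\|g_k\|_{1,\,r_0/p},
\]
the two outer inequalities being instances of \eqref{embineq} and the middle one the triangle inequality in $L^{1}$. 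If $q_1>p$ (so we are in (i)) and $r_0\le p$, the same chain gives $\ell^{p}(L^{p,r_0})\hookrightarrow L^{p,r_1}(\ell^{p})\hookrightarrow L^{p,r_1}(\ell^{q_1})$.

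\textbf{The main obstacle.} The remaining configuration, $q_0=p<q_1$ with $r_0>p$ (inside case (i)), is the delicate one: the elementary triangle inequalities no longer apply directly, and the \emph{strict} inequality $q_1>p$ must be exploited — indeed \S\ref{spequality-nec-pr} shows that with $q_1=p$ the embedding fails, which is the failure of a triangle inequality in $L^{1,\rho}$, $\rho>1$, alluded to in the introduction. My plan for this case is to go through real interpolation: first use the power trick with a small $\sigma$ to arrange $p>1$, so that $L^{p}$-duality and the identification of the dual of $L^{p,r}$ with $L^{p',r'}$ are available; then realize $L^{p,r_1}$ (or $\ell^{q_1}$) as a real interpolation space between Lebesgue (resp.\ $\ell$-) spaces, use the $K$-functional description $K(t,f;X(A_0),X(A_1))\approx\big\|\,K(t,f(\cdot);A_0,A_1)\,\big\|_X$ of vector-valued real interpolation together with the interpolation identities of \S\ref{interpolationpara}, and reduce the statement to Minkowski's inequality for Lebesgue spaces and to the $L^{1}$-based argument above. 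Carrying out this interpolation bookkeeping correctly — in particular handling the endpoint where one cannot simply move the $\ell^{q_1}$-norm inside an $L^{a}$-norm with $a<p$ — is where the real work lies, and it is the part I expect to be genuinely subtle rather than routine.
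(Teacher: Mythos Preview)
Your argument for $q_0<p$ is correct and is exactly the paper's Lemma~\ref{vectineq-prqlem}. Your argument for $q_0=p$ with $r_0\le p$ is also correct (note that $q_0=p$ together with $q_0\le r_1$ forces $r_1\ge p$, so the chain through $L^1$ is legitimate). These parts match the paper.

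The genuine gap is your ``main obstacle'' $q_0=p<q_1$ with $r_0>p$. Here you have not given a proof, only a plan, and the plan is problematic: the identity $K(t,f;X(A_0),X(A_1))\approx\big\|K(t,f(\cdot);A_0,A_1)\big\|_X$ that you invoke does \emph{not} hold in general for mixed $L^{p,r}(\ell^q)$ spaces---this is precisely the phenomenon behind Cwikel's results that such interpolation identities may fail. So the interpolation route, as written, does not close.

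The tool you are missing is already in \S\ref{reviewsect}: the $p$-triangle inequality \eqref{ptrianglelem} for $L^{p,r}$ with $p<1$, $r\ge p$ (Stein--Taibleson--Weiss). It gives, for $p<q\le r$,
\[
\Big\|\Big(\sum_k|f_k|^q\Big)^{1/q}\Big\|_{L^{p,r}}
=\Big\|\sum_k|f_k|^q\Big\|_{L^{p/q,r/q}}^{1/q}
\lc\Big(\sum_k\big\||f_k|^q\big\|_{L^{p/q,r/q}}^{p/q}\Big)^{1/p}
=\Big(\sum_k\|f_k\|_{L^{p,r}}^{p}\Big)^{1/p},
\]
applying \eqref{ptrianglelem} to $L^{p/q,r/q}$ (where $p/q<1$ and $r/q\ge 1\ge p/q$). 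This is the paper's Lemma~\ref{ppr-prr}(i). With it the obstacle dissolves: since $r_0\le r_1$ and (from $q_0=p\le r_1$, $r_0>p$) necessarily $p<r_1$, one has
\[
\ell^{p}(L^{p,r_0})\hookrightarrow\ell^{p}(L^{p,r_1})\hookrightarrow L^{p,r_1}(\ell^{\min(q_1,r_1)})\hookrightarrow L^{p,r_1}(\ell^{q_1}),
\]
the middle embedding being the display above with $q=\min(q_1,r_1)\le r_1$ and $p<\min(q_1,r_1)$. No interpolation is needed.
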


\begin{proposition} \label{btldualprepthm}
Let $0<p<\infty$, $0< q_0,q_1, r_0, r_1\le \infty$, $r_0\le r_1$ and 
$q_1\ge \max\{p, q_0, r_0\}$.
The embedding 
$$
L^{p,r_0}(\ell^{q_0})\hookrightarrow 
\ell^{q_1}(L^{p,r_1}) 
$$
holds in each of the following three cases:

(i) $p\neq q_0$,  (ii) $p=q_0\le r_1$, (iii) 
$r_1<q_0=p<q_1$.
 \end{proposition}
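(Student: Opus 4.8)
The plan is to obtain Proposition~\ref{btldualprepthm} as the functional-analytic dual of Proposition~\ref{btlprepthm}, which lets one import all of the real work already done there. First I would dispose of degenerate exponents. If $q_1=\infty$ the embedding is trivial: $|f_k|\le(\sum_j|f_j|^{q_0})^{1/q_0}$ pointwise, so \eqref{embineq} gives $\sup_k\|f_k\|_{p,r_1}\le\|(\sum_j|f_j|^{q_0})^{1/q_0}\|_{p,r_1}\le\|(\sum_j|f_j|^{q_0})^{1/q_0}\|_{p,r_0}$; and this also covers $q_0=\infty$ and $r_0=\infty$, since each of those forces $q_1\ge\max\{p,q_0,r_0\}=\infty$. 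If $r_1=\infty$ but $r_0<\infty$, then by \eqref{embineq} it suffices to prove the assertion with $r_1$ replaced by any finite value $\ge\max\{r_0,p\}$, which leaves the hypotheses in case~(i) or case~(ii). Finally, the homogeneity relation \eqref{powers-for-seqnorms} shows the embedding for $(p,q_0,q_1,r_0,r_1)$ is equivalent to the one for $(p/\sigma,q_0/\sigma,q_1/\sigma,r_0/\sigma,r_1/\sigma)$ for every $\sigma>0$, and all hypotheses are scale invariant; choosing $\sigma<\min\{p,q_0,r_0\}$ puts us in the range $1<p,q_0,q_1,r_0,r_1<\infty$.

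In that range the spaces are reflexive, and pairing sequences of functions by $\langle f,g\rangle=\sum_k\int f_kg_k$ one has $(L^{p,r_0}(\ell^{q_0}))^*=L^{p',r_0'}(\ell^{q_0'})$ and $(\ell^{q_1}(L^{p,r_1}))^*=\ell^{q_1'}(L^{p',r_1'})$; this combines the scalar Lorentz duality recalled in \S\ref{reviewsect} (valid on $\bbR^d$ and on the index set with counting measure) with the standard duality of $\ell^q$-valued Bochner spaces. Consequently the identity embedding $L^{p,r_0}(\ell^{q_0})\hookrightarrow\ell^{q_1}(L^{p,r_1})$ is bounded if and only if its adjoint, again the identity map on sequences, gives $\ell^{q_1'}(L^{p',r_1'})\hookrightarrow L^{p',r_0'}(\ell^{q_0'})$. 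But this last embedding is precisely Proposition~\ref{btlprepthm} applied with $(p',q_1',q_0',r_1',r_0')$ in place of $(p,q_0,q_1,r_0,r_1)$: there the constraint $q_1'\le\min\{p',q_0',r_0'\}$ is equivalent to $q_1\ge\max\{p,q_0,r_0\}$, the constraint $r_1'\le r_0'$ to $r_0\le r_1$, and the three alternatives $p'\ne q_0'$, $\ p'=q_0'\ge r_1'$, $\ q_1'<p'=q_0'<r_1'$ correspond exactly to cases (i) $p\ne q_0$, (ii) $p=q_0\le r_1$, (iii) $r_1<q_0=p<q_1$. So Proposition~\ref{btlprepthm} provides the dual embedding, and we are done.

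The step I expect to need the most care is the duality itself: one must make sure $(L^{p,r})^*=L^{p',r'}$ holds for the whole range $1<p<\infty$, $1\le r<\infty$ (in particular on the discrete index set, as in \S\ref{reviewsect}) and that this survives passage to $\ell^q$-valued functions, which rests on the Radon-Nikodym property of $L^{p,r}$ (equivalently of $\ell^q$) in this range. If one prefers an argument that makes the triangle inequalities of the introduction explicit, one can instead split on $q_1$ versus $p$. When $q_1>p$, chain $L^{p,r_0}(\ell^{q_0})\hookrightarrow L^{p,r_0}(\ell^{q_1})\hookrightarrow\ell^{q_1}(L^{p,r_0})\hookrightarrow\ell^{q_1}(L^{p,r_1})$, where the middle step is a reverse-Minkowski inequality for Lorentz sequence spaces — the companion of Proposition~\ref{btlprepthm} — which after the substitution $u_k=|f_k|^{q_1}$ together with \eqref{lor-power} and \eqref{powers-for-seqnorms} reduces to $\sum_k\|u_k\|_{P,R}\lesssim\|\sum_ku_k\|_{P,R}$ for $u_k\ge0$, $P=p/q_1<1$, $R=r_0/q_1\le1$, a Lorentz triangle inequality provable from \eqref{Lorentznorms}. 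When $q_1=p$ one has $q_0,r_0\le p$; the subcase $r_1\ge p$ collapses by \eqref{embineq} to $\|(\sum_k|f_k|^p)^{1/p}\|_{L^p}\le\|(\sum_k|f_k|^{q_0})^{1/q_0}\|_{p,r_0}$, and the only remaining configuration ($q_1=p$, $r_0\le r_1<p$, which forces case (i) and $q_0<p$) likewise reduces, after lowering $r_1$ to $r_0$ by \eqref{embineq} and rescaling by $\sigma=q_0$, to a triangle-type inequality $(\sum_k\|g_k\|_{P,R}^P)^{1/P}\lesssim\|\sum_kg_k\|_{P,R}$ with $P=p/q_0>1$, $R=r_0/q_0\le P$, handled via \eqref{Lorentznorms} and the level-set estimates of \S\ref{Lprcomputationsupper}.
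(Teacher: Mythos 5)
Your main argument is correct, and it reaches the proposition by a shortcut the paper does not take: you dualize Proposition~\ref{btlprepthm} globally, whereas the paper first dualizes the individual vector-valued Minkowski inequalities (Lemma~\ref{vectineq-prqlem} becomes Lemma~\ref{dualitylem}, Lemma~\ref{ppr-prr} becomes Lemma~\ref{secdualitylem}, each via a pairing argument after rescaling the exponents above $1$) and then re-runs the entire case analysis of Proposition~\ref{btlprepthm} with the arrows reversed. Your preliminary reductions (disposing of $q_0,r_0,q_1,r_1=\infty$, then rescaling by $\sigma<\min\{p,q_0,r_0\}$ to land in $1<p,q_i,r_i<\infty$) are sound, and the correspondence of the hypotheses and of cases (i)--(iii) under $x\mapsto x'$ checks out exactly. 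What your route buys is that the six-subcase bookkeeping is done once, in Proposition~\ref{btlprepthm}, and never repeated. The price is the identification $(L^{p,r}(\ell^q))^*=L^{p',r'}(\ell^{q'})$; note that you do not actually need the full Banach-space duality or the Radon--Nikodym property, only the two K\"othe-duality inequalities (H\"older for the pairing of $L^{p,r_0}(\ell^{q_0})$ with $L^{p',r_0'}(\ell^{q_0'})$, and the norming property $\|f\|_{\ell^{q_1}(L^{p,r_1})}\approx\sup\{|\langle f,g\rangle|:\|g\|_{\ell^{q_1'}(L^{p',r_1'})}\le 1\}$), both of which follow from the scalar Lorentz duality recalled in \S\ref{reviewsect} together with $\ell^q$--$\ell^{q'}$ duality; this is essentially the same amount of duality the paper uses inside the proof of Lemma~\ref{dualitylem}.

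One caution about the alternative argument in your last paragraph: the ``reverse Minkowski'' inequality $\sum_k\|u_k\|_{P,R}\lesssim\|\sum_k u_k\|_{P,R}$ for $P<1$, $R\le 1$ is not an elementary consequence of \eqref{Lorentznorms}; after undoing the substitution $u_k=|f_k|^{q_1}$ it is precisely Lemma~\ref{dualitylem}, which the paper itself obtains only by duality from Lemma~\ref{vectineq-prqlem}. Likewise your final ``triangle-type inequality'' with $P=p/q_0>1$ is Lemma~\ref{secdualitylem} in disguise (and in the subcase $r_0>q_0$ one must first enlarge $q_0$ to $r_0$, as in the paper's Case 3-3'). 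So the alternative route, as sketched, is not independent of duality but a restatement of the paper's lemmas; this does not affect your main proof.
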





\begin{remark} In both Proposition \ref{btlprepthm} and Proposition \ref{btldualprepthm} the assumptions (i), (ii), (iii) cannot be improved (unless one imposes very restrictive conditions  on the  underlying measure spaces). 
This follows from the examples for the spaces $B^s_q[L^{p,r}]$, $F^s_q[L^{p,r}]$ discussed in 
 \S\ref{meassmoothness}, although one can give less technical examples for the propositions. \end{remark}



We split the proof into several lemmata.
\begin{lemma}\label{vectineq-prqlem}
Suppose  $  q\le r\le p \text{ or  }q<p\le r$. Then
\Be
\label{p>q}
\ell^q(L^{p,r})\hookrightarrow L^{p,r}(\ell^q).
\Ee
\end{lemma}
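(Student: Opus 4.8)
The plan is to reduce the embedding \eqref{p>q} to a scalar (generalized) triangle inequality for Lorentz quasi-norms by means of the power identities \eqref{powers-for-seqnorms}. Spelling out the definitions in \eqref{seq-norms}, the embedding \eqref{p>q} is the estimate
\[
\Big\| \Big(\sum_k |f_k|^q\Big)^{1/q}\Big\|_{p,r}\ \lc\ \Big(\sum_k \|f_k\|_{p,r}^q\Big)^{1/q};
\]
raising both sides to the $q$-th power and writing $g_k=|f_k|^q\ge 0$, the identities \eqref{powers-for-seqnorms} with $\sigma=q$ turn this into the equivalent statement
\[
\Big\| \sum_k g_k\Big\|_{p/q,\,r/q}\ \lc\ \sum_k \|g_k\|_{p/q,\,r/q}.
\]
Under either hypothesis of the lemma one has $p/q\ge 1$ and $r/q\ge 1$, so the target is exactly a triangle inequality for a Lorentz quasi-norm whose first index is at least $1$.

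I would first treat the case $q\le r\le p$. Here $1\le r/q\le p/q$, so by the theorem of Lorentz \cite{lorentz2} --- the constant $C_{p,r}$ in \eqref{trianglep>1} may be taken equal to $1$ when $1\le r\le p$ --- applied with the pair $(p/q,r/q)$, the quasi-norm $\|\cdot\|_{p/q,r/q}$ satisfies the ordinary triangle inequality, and the second display above holds with constant $1$. In the degenerate subcase $p=q$ one necessarily has $r=q$, and the estimate reduces to Minkowski's inequality in $L^1$.

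In the remaining case $q<p\le r$ one has $1<p/q\le r/q$, and \eqref{trianglep>1} applies directly with exponents $(p/q,r/q)$ --- the strict inequality $p/q>1$ being precisely what makes the constant there finite --- so that the second display holds with constant $(1-q/p)^{-1}$ (or with the sharp constant from \cite{bks} when $r<\infty$). Combined with the reduction above, this establishes \eqref{p>q} with an implicit constant depending only on $p,q,r$; the passage from finite to countable index sets is automatic, since all the $g_k$ are nonnegative and monotone convergence applies.

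This lemma presents no genuine difficulty; the only point requiring care is the index bookkeeping, namely checking that both hypotheses place $(p/q,r/q)$ in the regime where a triangle inequality for $L^{p/q,r/q}$ is available --- that is, $p/q\ge 1$ always, and $p/q>1$ in the second case so that the constant in \eqref{trianglep>1} is finite. One could alternatively argue straight from the distribution-function formula \eqref{Lorentznorms} together with the additivity of the maximal function $f\mapsto f^{**}$ (see \eqref{**fct}), but the power-reduction route is the shortest. The substantive work of this section lies rather in the reverse embeddings $L^{p,r}(\ell^q)\hookrightarrow\ell^q(L^{p,r})$ and in the off-diagonal and endpoint cases handled in the lemmata that follow.
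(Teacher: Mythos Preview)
Your proof is correct and follows essentially the same route as the paper: reduce via the power identities to the triangle inequality \eqref{trianglep>1} in $L^{p/q,r/q}$, which is available since $p/q>1$ and $r/q\ge 1$ outside the trivial case $p=q=r$. The paper handles both hypotheses at once after noting $p>q$, whereas you split into cases and track the constants more explicitly, but the argument is the same.
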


\begin{proof}
The asserted inequality is trivial when $p=q=r$.
We may thus assume $p>q$.  Then

\begin{align*}
\| f\|_{L^{p,r}(\ell^q)}^q
=\Big\| \sum_k |f_k|^q \Big\|_{L^{p/q, r/q}}
\lc 
\sum_k
\big\|  |f_k|^q \big\|_{L^{p/q, r/q}}
=
 \sum_k \|f_k\|_{L^{p,r}}^q .
\end{align*}
Here we have used the triangle inequality in  
\eqref {trianglep>1}, for the space $L^{p/q, r/q}$, and twice the formula 
\eqref{lor-power}.
\end{proof}

\begin{lemma} \label{dualitylem}
 Suppose that  either $p\le r\le q$  or  $ r\le p<q$. Then
\[
L^{p,r}(\ell^q)\hookrightarrow \ell^q(L^{p,r}).
\]
\end{lemma}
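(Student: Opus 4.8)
Suppose that either $p\le r\le q$ or $r\le p<q$. Then $L^{p,r}(\ell^q)\hookrightarrow \ell^q(L^{p,r})$.

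The statement is exactly the dual of Lemma \ref{vectineq-prqlem}, and the plan is to deduce it by duality. First I would record the trivial case $p=q=r$ separately; outside it we have $q>p$, which is where the real content lies. For the range $1<p<\infty$, $1\le r\le q$ (with $q>p$), the idea is to use the normed versions $\trn\cdot\trn$ of the Lorentz norms introduced via the $f^{**}$ maximal function, together with the duality identifications quoted in the excerpt: the dual of $L^{p,r}$ is $L^{p',r'}$ on a non-atomic measure space, and on the discrete space $\bbN$ with counting measure the dual of $\ell^q$ is $\ell^{q'}$, so that $L^{p,r}(\ell^q)$ has dual $L^{p',r'}(\ell^{q'})$ and $\ell^q(L^{p,r})$ has dual $\ell^{q'}(L^{p',r'})$. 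Under our hypotheses on $(p,r,q)$ the conjugate triple $(p',r',q')$ satisfies $q'\le r'\le p'$ or $q'<p'\le r'$ (checking: $q>p$ gives $q'<p'$; $r\le q$ gives $r'\ge q'$; $r\le p$ gives $r'\ge p'$, while $p\le r$ gives $r'\le p'$ — one verifies the two cases match the two hypotheses of Lemma \ref{vectineq-prqlem} applied to the conjugate exponents). Hence Lemma \ref{vectineq-prqlem} gives $\ell^{q'}(L^{p',r'})\hookrightarrow L^{p',r'}(\ell^{q'})$, and dualizing this continuous inclusion yields the desired embedding $L^{p,r}(\ell^q)\hookrightarrow \ell^q(L^{p,r})$.

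To make the duality argument rigorous I would test: given $f=\{f_k\}\in L^{p,r}(\ell^q)$, estimate $\big(\sum_k\|f_k\|_{p,r}^q\big)^{1/q}$ by pairing $f$ against a well-chosen sequence $g=\{g_k\}$. Concretely, one picks for each $k$ a function $g_k$ with $\trn g_k\trn_{p',r'}\le 1$ nearly attaining $\trn f_k\trn_{p,r}$, then rescales by the $\ell^{q'}$-dual weights, so that $\langle f,g\rangle\approx \big(\sum_k\trn f_k\trn_{p,r}^q\big)^{1/q}$ while $\trn g\trn_{\ell^{q'}(L^{p',r'})}\le 1$; applying the already-proven inclusion $\ell^{q'}(L^{p',r'})\hookrightarrow L^{p',r'}(\ell^{q'})$ bounds $\trn g\trn_{L^{p',r'}(\ell^{q'})}\lesssim 1$, and Hölder's inequality in the $L^{p',r'}$–$L^{p,r}$ and $\ell^{q'}$–$\ell^q$ dualities bounds $|\langle f,g\rangle|\lesssim \trn f\trn_{L^{p,r}(\ell^q)}$. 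Since $\trn\cdot\trn$ and $\|\cdot\|$ are equivalent for $p>1$, this gives the claim on that range.

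The remaining issue is the quasi-Banach range $p\le 1$ (with $q>p$, and still $p\le r\le q$ or $r\le p<q$), where duality is not directly available. Here the plan is the homogeneity trick used elsewhere in the paper: fix $u>0$ with $u<p$ and $u\le 1$, $u<q$, and apply the already-established normed case to the triple $(p/u, r/u, q/u)$ — note $p/u>1$ and the inequalities $p\le r\le q$ or $r\le p<q$ are preserved under division by $u$ — to get $L^{p/u,r/u}(\ell^{q/u})\hookrightarrow \ell^{q/u}(L^{p/u,r/u})$; then use the power formulas \eqref{powers-for-seqnorms} (applied to the sequence $\{|f_k|^u\}$) to convert this into $L^{p,r}(\ell^q)\hookrightarrow \ell^q(L^{p,r})$ with the $u$-th powers cancelling. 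I expect this last descent to $p\le 1$ to be the only genuinely delicate point: one must make sure the conditions $u<p$, $u\le 1$, $u<q$ can always be met (they can, since $p,q>0$) and that the case $r=\infty$ or $q=\infty$ is handled by the usual sup-conventions, which I would mention but not belabor.
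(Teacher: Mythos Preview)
Your approach is essentially the paper's: use Lemma \ref{vectineq-prqlem} on the conjugate exponents via duality, with the power trick to reach the Banach range. The paper organizes it slightly more efficiently by applying the power trick \emph{first}---taking $a<\min\{p,q,r\}$ and setting $(P,Q,R)=(p/a,q/a,r/a)$---so that the duality argument is run just once, for all $r<\infty$ simultaneously. Your two-stage version (Banach case first, then descend) is fine in principle, but as written it has two small holes. First, your ``normed case'' requires $r\ge 1$, not just $p>1$; the subcase $r<1<p$ (possible when $r\le p<q$) is not covered by your Banach argument and must also go through the power trick, so you need $u<\min\{p,q,r\}$ rather than just $u<p$, $u\le 1$, $u<q$. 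Second, the endpoint $r=\infty$ (which forces $q=\infty$) is not handled by ``the usual sup-conventions'': the duality pairing genuinely breaks down there. The paper treats it by the one-line direct estimate
\[
\mu(\{|f_{k_0}|>\alpha\})\le \mu(\{\sup_k|f_k|>\alpha\}),
\]
which immediately gives $\sup_k\|f_k\|_{p,\infty}\le \|\sup_k|f_k|\|_{p,\infty}$.
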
 

\begin{proof}
We first consider the case  $r<\infty$ and argue by duality.
Recall that if $A$ is a Banach space, $A'$  its dual and $1<u<\infty$ then the dual of $\ell^u(A)$ is $\ell^{u'}(A')$,
with the natural pairing.
Let $a<\min \{p,q,r\}$ and set $(P,Q,R)= (p/a,q/a, r/a)$ so that $1<P,Q,R<\infty$.
Since for $1<P,R<\infty$ the dual of $L^{P,R}$ is $L^{P',R'}$ we see that
\begin{align*}&\| f\|_{\ell^q(L^{p,r})}^a=
\Big( \sum_k\|f_k\|_{L^{p,r}}^q\Big)^{a/q}
\\&= 
\Big(\sum_k\| |f_k|^a \|_{L^{P,Q}}^Q\Big)^{1/Q}
\le 
\Big(\sum_k\trn |f_k|^a \trn_{L^{P,Q}}^Q\Big)^{1/Q}
\\&\lc \sup \Big\{ \sum_k\int|f_k(x)|^a g_k(x) d\mu(x) \,: \trn g\trn_{\ell^{Q'}(L^{P',R'})}\le 1\Big\}
\end{align*}
where the implicit constants depend on $p,q,r$.
Now, let $\trn g\trn_{\ell^{Q'}(L^{P',R'})} \le 1$. Then
\begin{align*}
\sum_k \int   |f_k(x)|^a&|g_k(x)| d\mu \le 
\int\Big(\sum_k|f_k(x)|^{aQ}\Big)^{1/Q} \Big(\sum_k|g_k(x)|^{Q'}\Big)^{1/Q'} d\mu
\\
&\lc \Btrn
\Big(\sum_k|f_k|^{aQ}\Big)^{1/Q} \Btrn_{L^{P,R}} \,\Btrn \Big(\sum_k|g_k|^{Q'}\Big)^{1/Q'} \Btrn_{L^{P',R'}}
\\
&\lc \Big\| \Big(\sum_k|f_k|^{aQ} \Big)^{1/aQ} \Big\|_{L^{aP,aR}}^a
\trn g\trn_{\ell^{Q'}(L^{P',R'})} 
\,\lc \big\|f\big\|_{L^{p,r}(\ell^q)}^a ;
\end{align*}
here we have used for the second to last inequality that 
$$
 \trn g \trn_{L^{P',R'}(\ell^{Q'})} \lc \| g \|_{L^{P',R'}(\ell^{Q'})} \lc\|g\|_{\ell^{Q'}(L^{P',R'})},
$$ by Lemma \ref{vectineq-prqlem} since $Q'\le R'\le P'$ or $Q'<P'\le R'$.
This completes the proof for  $r<\infty$.

Next assume $r=\infty$, then also $q=\infty$. 
Clearly we have 
for any fixed $k_0$ 
\[\mu(\{x: |f_{k_0}(x) |>\alpha\}) \le 
\mu(\{x: \sup_k|f_{k}(x) |>\alpha\}).
\]
Hence
$ \sup_k \sup_{\alpha>0}  \alpha[\mu_{f_k}(\alpha)]^{1/p}
\le \sup_{\alpha>0}  \alpha [ \mu_{\sup_k|f_k|}(\alpha)]^{1/p}$
which yields the case  $r=q=\infty$.
\end{proof}

Next we state some weaker embedding properties for the case $p<q\le r$.

\begin{lemma}\label{ppr-prr}
(i) Let $p<q\le r$ or $p=q=r$. Then 
\[\ell^p(L^{p,r})\hookrightarrow L^{p,r}(\ell^q)\,.\]
 (ii) Let $v<p\le r$. Then 
\[\ell^v(L^{p,r})\hookrightarrow L^{p,r}(\ell^p).\]
\end{lemma}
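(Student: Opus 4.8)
The idea is to convert each vector-valued inequality into a triangle-type inequality for a \emph{single} Lorentz space by means of the power identity \eqref{lor-power}, and then to invoke whichever of the triangle inequalities \eqref{ptrianglelem} or \eqref{trianglep>1} is legitimate for the exponents that arise. For part (i), the case $p=q=r$ is trivial (both sides equal $\|f\|_{L^p(\ell^p)}$), so assume $p<q\le r$. Setting $h=\big(\sum_k|f_k|^q\big)^{1/q}$ and applying \eqref{lor-power} with $\sigma=q$ gives
\[
\|f\|_{L^{p,r}(\ell^q)}^q=\|h\|_{p,r}^q=\Big\|\sum_k|f_k|^q\Big\|_{L^{p/q,\,r/q}}.
\]
Because $q>p$ \emph{strictly}, the first exponent $p/q$ is $<1$, and because $r\ge q\ge p$ the second exponent $r/q$ is $\ge p/q$; hence \eqref{ptrianglelem} applies in $L^{p/q,r/q}$ and, followed by a second use of \eqref{lor-power}, yields
\[
\Big\|\sum_k|f_k|^q\Big\|_{L^{p/q,r/q}}\le C\Big(\sum_k\big\||f_k|^q\big\|_{L^{p/q,r/q}}^{p/q}\Big)^{q/p}=C\Big(\sum_k\|f_k\|_{p,r}^p\Big)^{q/p}.
\]
Taking $q$-th roots is the assertion. (When $q=\infty$, which forces $r=\infty$, one first picks a finite $q_0$ with $p<q_0<\infty$ and uses $\sup_k|f_k|\le\big(\sum_k|f_k|^{q_0}\big)^{1/q_0}$ to reduce to the case already treated.)

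For part (ii) the essential feature is that $v<p$ \emph{strictly}, which keeps us away from the first exponent $1$, where the analogue of \eqref{trianglep>1} fails. I would factor the embedding as
\[
\ell^{v}(L^{p,r})\hookrightarrow L^{p,r}(\ell^{v})\hookrightarrow L^{p,r}(\ell^{p}).
\]
The first arrow is precisely Lemma \ref{vectineq-prqlem} in the case $v<p\le r$ (with the dummy exponent there taken to be $v$). The second arrow is elementary: since $v\le p$ one has the pointwise bound $\big(\sum_k|f_k|^p\big)^{1/p}\le\big(\sum_k|f_k|^v\big)^{1/v}$, and $g\mapsto\|g\|_{p,r}$ is monotone on nonnegative functions because the nonincreasing rearrangement is, so $\|f\|_{L^{p,r}(\ell^p)}\le\|f\|_{L^{p,r}(\ell^v)}$. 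Composing the two gives (ii).

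I do not expect a genuine obstacle; the only thing to watch is the exponent bookkeeping and the repeated passage through \eqref{lor-power}. The one conceptual point worth recording is \emph{why} part (i) cannot be obtained by simply dominating $\ell^q$ by $\ell^p$ inside the norm and citing an embedding $\ell^p(L^{p,r})\hookrightarrow L^{p,r}(\ell^p)$: the latter is false once $r>p$, since by \eqref{lor-power} it is equivalent to the ordinary triangle inequality in $L^{1,r/p}$, which fails for $r/p>1$. The argument above circumvents this by using \eqref{lor-power} to land in $L^{p/q,r/q}$ with first exponent \emph{below} $1$, where the genuinely sublinear inequality \eqref{ptrianglelem} is available; in part (ii) the strict inequality $v<p$ plays the dual role of landing us in $L^{p/v,r/v}$ with first exponent \emph{above} $1$, where \eqref{trianglep>1}, built into Lemma \ref{vectineq-prqlem}, applies.
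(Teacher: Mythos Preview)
Your proof is correct and follows essentially the same approach as the paper. For part (i) it is identical: reduce via \eqref{lor-power} to $L^{p/q,r/q}$ with $p/q<1$ and invoke \eqref{ptrianglelem}. For part (ii) the paper performs the same two steps in the opposite order (pointwise $\ell^v\hookrightarrow\ell^p$ first, then the triangle inequality in $L^{p/v,r/v}$ directly rather than via Lemma~\ref{vectineq-prqlem}), but the content is the same.
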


\begin{proof} The statement is trivial for $q=r=p$.
Let $p<q\le r$.  We use the modified $p/q$-triangle inequality in $L^{p/q,s}$ for $s=r/q\ge 1$, as in  \eqref{ptrianglelem}, and estimate
\begin{align*}
&\Big\|\Big(\sum_k|f_k|^q\Big)^{1/q}\Big\|_{L^{p,r}} = \Big\|\sum_k |f_k|^q \Big\|_{L^{p/q,r/q}}^{1/q}
\\
&\lc
\Big( \Big(   \sum_k  \big \| |f_k|^q \big\|_{L^{p/q,r/q}}^{p/q}\Big )^{{q}/{p}}\Big)^{ 1/q}
=
\Big(\sum_k  \big \|f_k \big\|_{L^{p,r}}^{p}\Big )^{1/p}\,. 
\end{align*}

For (ii) we  use the embedding $\ell^v\subset \ell^p$ and then the triangle inequality in $L^{p/v, r/v}$ (cf.  \eqref{trianglep>1}) to obtain
\begin{align*}
&\Big\|\Big(\sum_k|f_k|^p\Big)^{1/p}\Big\|_{L^{p,r}} \le  \Big\|\Big(\sum_k |f_k|^v \Big)^{1/v}\Big\|_{L^{p,r}}
\\
 &\le  \Big\|\sum_k |f_k|^v \Big\|_{L^{p/v,r/v}}^{1/v} \lc \Big( 
 \sum_k \| |f_k|^v \|_{L^{p/v,r/v}}\Big)^{1/v}  \lc 
  \Big(\sum_k \|f_k\|_{L^{p,r}}^v\Big)^{1/v}.  \qedhere
\end{align*}
\end{proof}

\begin{lemma}\label{secdualitylem}
(i) Let $0< r\le q< p$ or $r=p=q$. Then
\[
L^{p,r}(\ell^q)\hookrightarrow \ell^p(L^{p,r}).
\]

(ii) Let $ 0< r\le p<w$. Then
\[
L^{p,r}(\ell^p)\hookrightarrow \ell^w(L^{p,r}).
\]
\end{lemma}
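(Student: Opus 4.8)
The plan is to deduce both embeddings by duality, in exact analogy with the proof of Lemma \ref{dualitylem}: statements (i) and (ii) are the duals --- under conjugate Lorentz and $\ell^q$ exponents --- of the two embeddings in Lemma \ref{ppr-prr}, so the same argument should go through with Lemma \ref{ppr-prr} used on the dual side in place of Lemma \ref{vectineq-prqlem}. First I would dispose of the degenerate cases: in (i) the case $r=p=q$ is Fubini's theorem, and in (ii) the case $w=\infty$ follows at once from the lattice property of $L^{p,r}$ (pointwise $|f_j|\le(\sum_k|f_k|^p)^{1/p}$ forces $\|f_j\|_{p,r}\le\|(\sum_k|f_k|^p)^{1/p}\|_{p,r}$), while $r=p$ in (ii) reduces to $\ell^p\hookrightarrow\ell^w$. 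So one may assume $0<r\le q<p<\infty$ in (i) and $0<r\le p<w<\infty$ in (ii).

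For (i), fix $a$ with $0<a<r$, so that $(P,Q,R):=(p/a,q/a,r/a)$ lies in $(1,\infty)^3$ with $R\le Q<P$. By the power formula \eqref{lor-power}, $\|f\|_{\ell^p(L^{p,r})}^a=\big(\sum_k\| |f_k|^a\|_{L^{P,R}}^P\big)^{1/P}$, which is $\le\trn\{|f_k|^a\}\trn_{\ell^P(L^{P,R})}$ after passing to the equivalent true norm $\trn\cdot\trn_{P,R}$ (licit since $P>1$ and $1<R<\infty$). Since $(\ell^P(L^{P,R}))'=\ell^{P'}(L^{P',R'})$, this equals $\sup\{\sum_k\int|f_k|^a g_k\,d\mu:\trn g\trn_{\ell^{P'}(L^{P',R'})}\le1\}$, where one may take $g_k\ge0$. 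For such $g$, H\"older in $k$ with the pair $(Q,Q')$ and then the H\"older inequality $\int|uv|\,d\mu\lc\|u\|_{P,R}\|v\|_{P',R'}$ for conjugate Lorentz indices give
\begin{align*}
\sum_k\int|f_k|^a g_k\,d\mu
&\le\int\Big(\sum_k|f_k|^{aQ}\Big)^{1/Q}\Big(\sum_k g_k^{Q'}\Big)^{1/Q'}d\mu\\
&\lc\Btrn\Big(\sum_k|f_k|^{aQ}\Big)^{1/Q}\Btrn_{L^{P,R}}\,\Btrn\Big(\sum_k g_k^{Q'}\Big)^{1/Q'}\Btrn_{L^{P',R'}}.
\end{align*}
By \eqref{lor-power} the first factor is $\|(\sum_k|f_k|^q)^{1/q}\|_{L^{p,r}}^a=\|f\|_{L^{p,r}(\ell^q)}^a$, and the second factor is $\trn\{g_k\}\trn_{L^{P',R'}(\ell^{Q'})}$, which is $\lc\trn g\trn_{\ell^{P'}(L^{P',R'})}\le1$ by Lemma \ref{ppr-prr}(i) with Lorentz exponents $(P',R')$ and sequence exponents satisfying $P'<Q'\le R'$ --- the conjugate of $R\le Q<P$. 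Hence $\|f\|_{\ell^p(L^{p,r})}\lc\|f\|_{L^{p,r}(\ell^q)}$.

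Part (ii) is completely parallel: take $0<a<r$, set $(P,R,W):=(p/a,r/a,w/a)\in(1,\infty)^3$ with $R\le P<W$, dualize $\ell^W(L^{P,R})$ against $\ell^{W'}(L^{P',R'})$, apply H\"older in $k$ now with the conjugate pair $(P,P')$, and estimate the resulting $g$-factor $\trn\{g_k\}\trn_{L^{P',R'}(\ell^{P'})}$ by $\trn g\trn_{\ell^{W'}(L^{P',R'})}$ via Lemma \ref{ppr-prr}(ii), whose hypothesis $v<p\le r$ is met with Lorentz exponents $(P',R')$ and sequence exponents $W'<P'\le R'$ --- the conjugate of $R\le P<W$.

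I expect the only genuinely delicate point to be bookkeeping rather than analysis: checking that passing to conjugate exponents turns $R\le Q<P$ into precisely the admissible range $p<q\le r$ of Lemma \ref{ppr-prr}(i) (and $R\le P<W$ into the range $v<p\le r$ of part (ii)), tracking the harmless passage between quasi-norms and the true norms $\trn\cdot\trn$ on the dual side (where all exponents exceed $1$), and the separate treatment of the endpoint $w=\infty$ in (ii). The substantive ingredients --- the vector-valued inequalities of Lemma \ref{ppr-prr} and the Lorentz duality/H\"older facts recalled in \S\ref{reviewsect} --- are all in hand.
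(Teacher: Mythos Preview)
Your proposal is correct and follows exactly the approach the paper prescribes: the paper's proof is a two-line instruction to ``argue by duality exactly as in the proof of Lemma \ref{dualitylem}, basing the argument on Lemma \ref{ppr-prr},'' and you have carried this out in full detail, including the conjugate-exponent bookkeeping and the separate treatment of the endpoint cases ($r=p=q$, $w=\infty$, $r=p$) that the paper leaves implicit.
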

\begin{proof}
The statement is trivial for $r=p=q$. If $0<r\le q<p<\infty$,  set
 $(P,Q,R)= (p/a,q/a, r/a)$ for some $a<\min\{ p,q,r\}$ and argue by duality exactly as in the proof of Lemma 
 \ref{dualitylem}, basing the argument on Lemma \ref{ppr-prr}.
 \end{proof}

\begin{proof} [Proof of Proposition \ref{btlprepthm}]
Let $q_0\le \min \{p,q_1,r_1\}$. We  distinguish the three cases according to whether $p$, $q_1$, or $r_1$ is the smallest exponent.

{\it Case 1:} $q_0\le q_1\le \min\{p,r_1\}$.  
If either $q_0\le r_1\le p$ or $q_0<p\le r_1$
then $\ell^{q_0}(L^{p,r_1})\hookrightarrow L^{p,r_1}(\ell^{q_0})$ by Lemma \ref{vectineq-prqlem} 
and hence
\[\ell^{q_0}(L^{p,r_0}) \hookrightarrow
\ell^{q_0}(L^{p,r_1})\hookrightarrow L^{p,r_1}(\ell^{q_0})
\hookrightarrow L^{p,r_1}(\ell^{q_1}).
\] 
In the remaining subcase we have $q_0=q_1=p\le r_1$ and  by assumption of the proposition we also have 
$r_0\le p$. Thus 
$\ell^{p}(L^{p,r_0})\hookrightarrow \ell^p(L^p)=L^p(\ell^p)\hookrightarrow L^{p,r_1}(\ell^p)$.

{\it Case 2:}  $q_0\le r_1\le\min\{p,q_1\}$. Note that 
$\ell^{r_1}(L^{p,r_1})\hookrightarrow L^{p,r_1}(\ell^{r_1}) $ for $r_1\le p$,  again by  Lemma  \ref{vectineq-prqlem}. Thus we obtain
\[\ell^{q_0}(L^{p,r_0}) \hookrightarrow \ell^{r_1}(L^{p,r_1})\hookrightarrow L^{p,r_1}(\ell^{r_1}) \hookrightarrow L^{p,r_1}(\ell^{q_1}).
\]

In the third case $q_0\le p\le \min\{q_1,r_1\}$. The embedding is trivial when $p=q_1=r_1$.
We distinguish three remaining subcases.

{\it Case 3-1:}  $p<q_1\le r_1$. We apply Lemma \ref{ppr-prr}, (i), 
to get 
\[\ell^{q_0}(L^{p,r_0})\hookrightarrow 
\ell^p(L^{p,r_1})\hookrightarrow  
L^{p,r_1}(\ell^{q_1}).\]

{\it Case 3-2:} $q_0\le p=q_1<r_1$. 
If   $q_0<p$  then by part (ii) of  Lemma \ref{ppr-prr}.
\[\ell^{q_0}(L^{p,r_0})  \hookrightarrow \ell^{q_0}(L^{p,r_1})\hookrightarrow L^{p,r_1}(\ell^p).\]
If $q_0=p$ then we have by assumption also $r_0\le p$ and therefore 
$\ell^p(L^{p,r_0})\hookrightarrow \ell^p(L^p)= L^p(\ell^p)\hookrightarrow L^{p,r_1}(\ell^p)$.

{\it Case 3-3:}   $q_0\le p\le r_1<q_1$. Now  observe that 
$\ell^p(L^{p,r_1})\hookrightarrow L^{p,r_1}(\ell^{r_1})$ 
by  Lemma  \ref{ppr-prr}, (i). Hence
\[ \ell^{q_0}(L^{p,r_0})\hookrightarrow 
\ell^p(L^{p,r_1})\hookrightarrow  
L^{p,r_1}(\ell^{r_1})\hookrightarrow  L^{p,r_1}(\ell^{q_1})\,.  \qedhere\]
\end{proof}

\begin{proof} [Proof of Proposition \ref{btldualprepthm}]
The proof is `dual' to the proof of 
Proposition  \ref{btlprepthm}. Formally the proof goes by reversing the arrows in the proof of Proposition \ref{btlprepthm} and replacing the subscripts $(0,1)$ by $(1,0)$. We now use Lemma \ref{dualitylem} and 
Lemma \ref{secdualitylem} in place of Lemma \ref{vectineq-prqlem} and
Lemma \ref{ppr-prr}. We  run through the cases:

{\it Case 1':} $q_1\ge q_0\ge \max\{p,r_0\}$. If 
$p\le r_0\le q_1$, or $r_0\le p< q_1$ and  the second of the following embeddings holds by 
Lemma \ref{dualitylem}:
\[ L^{p,r_0}(\ell^{q_0}) \hookrightarrow
L^{p,r_0}(\ell^{q_1}) \hookrightarrow
\ell^{q_1}(L^{p,r_0}) \hookrightarrow
\ell^{q_1}(L^{p,r_1}).
\]
If  $r_0\le p=q_0=q_1$ then also by assumption $r_1\ge p$ and hence $L^{p,r_0}(\ell^{q_0})\hookrightarrow L^p(\ell^p)=\ell^p(L^p)\hookrightarrow \ell^p(L^{p,r_1})$.

{\it Case 2':} $q_1\ge r_0\ge \max \{p,q_0\}$. First observe that
Lemma \ref{dualitylem} also implies 
$L^{p,r_0}(\ell^{r_0})\hookrightarrow \ell^{r_0}(L^{p,r_0})$ for $p\ge r_0$.
Hence
\[ L^{p,r_0}(\ell^{q_0}) \hookrightarrow
L^{p,r_0}(\ell^{r_0}) \hookrightarrow
\ell^{r_0}(L^{p,r_0}) \hookrightarrow
\ell^{q_1}(L^{p,r_1}).
\]

The third case $q_1\ge p\ge \max \{q_0, r_0\}$ is again split into three subcases (ignoring the trivial case $p=q_0=r_0$).

{\it Case 3-1':} $p>q_0\ge r_0$. We apply Lemma \ref{secdualitylem} to obtain
\[ L^{p,r_0}(\ell^{q_0}) \hookrightarrow
\ell^p(L^{p,r_0}) \hookrightarrow
\ell^{q_1}(L^{p,r_1}).
\]

{\it Case 3-2':} $q_1\ge p=q_0>r_1$. If 
$q_1>p$ we get by part   (ii) of Lemma \ref{secdualitylem},
\[ L^{p,r_0}(\ell^p) \hookrightarrow
\ell^{q_1}(L^{p,r_0}) \hookrightarrow
\ell^{q_1} (L^{p,r_1}). 
\]
If $q_1=p$ then by assumption $r_1\ge p$ and therefore 
$L^p(\ell^{r_0})\hookrightarrow L^p(\ell^p)=\ell^p(L^p)\hookrightarrow \ell^p(L^{p,r_1})$.

{\it Case 3-3':} $q_1 \ge p\ge r_0>q_0$. We use  that $L^{p,r_0}(\ell^{r_0})\hookrightarrow \ell^p(L^{p,r_0})$, by Lemma \ref{secdualitylem}.
Hence 
\[L^{p,r_0}(\ell^{q_0}) \hookrightarrow
L^{p,r_0}(\ell^{r_0}) \hookrightarrow
\ell^p(L^{p,r_0}) \hookrightarrow
\ell^{q_1}(L^{p,r_1}). \qedhere
\]
\end{proof}

We now get the statements (iv)-(vi) in Theorems \ref{BinFthm} and \ref{FinBthm}.

\begin{corollary}
 \label{btlthm}
Let $0<p<\infty$, $0< q_0,q_1,r_0, r_1\le \infty$. 

(i)  Suppose that either $p\neq q_1$ or that $p=q_1\ge r_0$. 
Then the embedding 
\Be \label{BvinFq} 
B^s_{q_0}[L^{p,r_0}] \hookrightarrow F^s_{q_1}[L^{p,r_1}]
\notag\Ee  holds if and only if $q_0\le\min\{p,q_1,r_1\}$ and $r_0\le r_1$.

(ii) Let $r_0>p$. Then
the embedding 
 $$B^s_{q_0}[L^{p,r_0}] \hookrightarrow F^s_{p}[L^{p,r_1}]$$  holds if and only if $q_0<p$ and $r_0\le r_1$.
\end{corollary}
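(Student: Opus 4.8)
The plan is to deduce the corollary directly from the sequence-space embeddings of Proposition \ref{btlprepthm} for the sufficiency part, and from the counterexamples already collected in \S\ref{meassmoothness} for the necessity part; no new analytic input is required.

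For sufficiency the key observation is that, by the very definitions of the two functors,
\[
\|f\|_{F^s_{q_1}[L^{p,r_1}]}=\big\|\{2^{ks}\La_k f\}_k\big\|_{L^{p,r_1}(\ell^{q_1})},
\qquad
\|f\|_{B^s_{q_0}[L^{p,r_0}]}=\big\|\{2^{ks}\La_k f\}_k\big\|_{\ell^{q_0}(L^{p,r_0})}.
\]
Since Proposition \ref{btlprepthm} gives a universal norm inequality valid for every sequence of functions, applying it to the sequence $\{2^{ks}\La_k f\}_k$ immediately yields $B^s_{q_0}[L^{p,r_0}]\hookrightarrow F^s_{q_1}[L^{p,r_1}]$. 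For part (i) the hypothesis ``$p\neq q_1$ or $p=q_1\ge r_0$'' is precisely cases (i)--(ii) of Proposition \ref{btlprepthm}, so under the additional assumptions $q_0\le\min\{p,q_1,r_1\}$ and $r_0\le r_1$ that proposition applies. For part (ii) we have $q_1=p<r_0$ and $q_0<p$; together with $r_0\le r_1$ this forces $q_0<p=q_1<r_0$ and $q_0<r_1$, i.e. exactly the situation of case (iii) of Proposition \ref{btlprepthm}, and again the proposition applies.

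For necessity we collect the counterexamples already produced. An embedding $B^s_{q_0}[L^{p,r_0}]\hookrightarrow F^s_{q_1}[L^{p,r_1}]$ forces $r_0\le r_1$ by \S\ref{p0lep1}, $q_0\le p$ by \S\ref{spequality-nec-pq}, $q_0\le q_1$ by \S\ref{q0q1necsect}, and $q_0\le r_1$ by \S\ref{franke-jawerth-nec}; together these give $q_0\le\min\{p,q_1,r_1\}$ and $r_0\le r_1$, which is the ``only if'' part of (i). For (ii), where $q_1=p$ and $r_0>p$, the same facts give $q_0\le p$ and $r_0\le r_1$; if one had $q_0=p$ the embedding would read $B^s_p[L^{p,r_0}]\hookrightarrow F^s_p[L^{p,r_1}]$, which by \eqref{cexbf4} forces $r_0\le p$, contradicting $r_0>p$; hence $q_0<p$.

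There is no genuine obstacle here: the corollary is a bookkeeping statement, and all the substantive work --- the vector-valued triangle inequalities underlying Proposition \ref{btlprepthm} and the explicit test functions of \S\ref{meassmoothness} --- has already been carried out. The only point requiring care is to check that the case hypotheses of the corollary line up exactly with cases (i)--(iii) of Proposition \ref{btlprepthm}, and that the strict inequality $q_0<p$ in (ii) is precisely what \eqref{cexbf4} rules out when $r_0>p$.
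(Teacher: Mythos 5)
Your proposal is correct and follows exactly the paper's own route: the sufficiency is obtained by applying Proposition \ref{btlprepthm} to the sequence $\{2^{ks}\La_k f\}$, and the necessity is assembled from the examples in \S\ref{p0lep1}, \S\ref{spequality-nec-pq}, \S\ref{q0q1necsect}, \S\ref{franke-jawerth-nec} and \eqref{cexbf4}, just as the paper indicates in its guide \S\ref{overview}. Your case-matching (in particular the check that $r_0>p$ and $q_0<p$ together with $r_0\le r_1$ place you in case (iii) of the proposition with $q_0\le r_1$ automatic) is the only point needing care, and you handle it correctly.
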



\begin{corollary} \label{btldualthm}
Let $0<p<\infty$, $0< q_0,q_1, r_0, r_1\le \infty$. 

(i) Suppose  either  that $p\neq q_0$ or that $p=q_0\le r_1$. 
Then the embedding 
\Be\label{FqinBw}
F^s_{q_0}[L^{p,r_0}]\hookrightarrow 
B^s_{q_1}[L^{p,r_1}]
\notag
\Ee
holds  if  and only if $q_1\ge\max\{p,q_0,r_0\}$ and $r_0\le r_1$. 

(ii) Let $r_1<p$. Then 
the  embedding $$F^s_p[L^{p,r_0}]\hookrightarrow B^s_{q_1}[L^{p,r_1}] $$ 
holds  if and only if  $q_1>p$ and $r_0\le r_1$. 
\end{corollary}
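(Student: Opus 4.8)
The plan is to obtain this corollary entirely from results already in place: the local means characterization \eqref{localmeanschar}, the vector-valued embedding of Proposition \ref{btldualprepthm} for the sufficiency, and the necessary conditions of \S\ref{meassmoothness} for the converse. No new estimates should be required.

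\textbf{Sufficiency.} First I would use \eqref{localmeanschar} on both sides: for every distribution $f$,
\[\|f\|_{F^s_{q_0}[L^{p,r_0}]}\approx \big\|\{2^{ks}\cL_k f\}_k\big\|_{L^{p,r_0}(\ell^{q_0})},\qquad \|f\|_{B^s_{q_1}[L^{p,r_1}]}\approx \big\|\{2^{ks}\cL_k f\}_k\big\|_{\ell^{q_1}(L^{p,r_1})},\]
so the embedding $F^s_{q_0}[L^{p,r_0}]\hookrightarrow B^s_{q_1}[L^{p,r_1}]$ reduces to the vector-valued inclusion $L^{p,r_0}(\ell^{q_0})\hookrightarrow \ell^{q_1}(L^{p,r_1})$ applied to the sequence $\{2^{ks}\cL_k f\}_k$. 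Then I would check that the stated hypotheses land us in the correct case of Proposition \ref{btldualprepthm}: in part (i) the assumptions ``$q_1\ge\max\{p,q_0,r_0\}$, $r_0\le r_1$, and ($p\neq q_0$ or $p=q_0\le r_1$)'' are precisely the hypotheses of that proposition in its cases (i) and (ii); in part (ii), where $q_0=p$ and $r_0\le r_1<p$ forces $\max\{p,q_0,r_0\}=p$, the assumptions ``$q_1>p$, $r_0\le r_1$'' put us in case (iii) (since $r_1<q_0=p<q_1$). The proposition then supplies the required sequence-space embedding.

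\textbf{Necessity.} Here I would simply assemble the counterexamples from \S\ref{meassmoothness}, all of which apply when $s_0=s_1$ and $p_0=p_1$. The condition $r_0\le r_1$ is \S\ref{p0lep1}; $q_0\le q_1$ is \S\ref{q0q1necsect}; $p\le q_1$ is the second implication in the corollary to Lemma \ref{firstexamplelemma} (\S\ref{spequality-nec-pq}); and $r_0\le q_1$ is the last implication in the corollary to Lemma \ref{secondexamplelemma} (\S\ref{franke-jawerth-nec}). Together these give $q_1\ge\max\{p,q_0,r_0\}$ and $r_0\le r_1$, which is the necessity part of (i). For (ii), when $r_1<p$ the implication \eqref{cexbf8} of \S\ref{spequality-nec-pr} shows $F^s_p[L^{p,r_0}]\not\hookrightarrow B^s_p[L^{p,r_1}]$, so $q_1=p$ is impossible; combined with $q_1\ge p$ this forces the strict inequality $q_1>p$.

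\textbf{Main obstacle.} There is no serious analytic difficulty left; the substantive work is all in Proposition \ref{btldualprepthm} and in the example constructions of \S\ref{meassmoothness}. What needs care is the endpoint bookkeeping: matching the disjunction in the hypothesis of part (i) to cases (i)/(ii) of the proposition, and making sure that in part (ii) the strict inequality $q_1>p$ is genuinely forced by \eqref{cexbf8} rather than only by the weaker $q_1\ge p$. It is also worth noting that this proof is formally dual to that of Corollary \ref{btlthm} — one reverses the inclusions and swaps the subscripts $0\leftrightarrow 1$, exactly as Proposition \ref{btldualprepthm} is dual to Proposition \ref{btlprepthm}.
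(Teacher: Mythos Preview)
Your proposal is correct and follows essentially the same approach as the paper: sufficiency via Proposition \ref{btldualprepthm} applied to the sequence $\{2^{ks}\La_k f\}$ (the paper uses $\La_k$ directly rather than passing through the local means characterization \eqref{localmeanschar}, but this is immaterial), and necessity via the examples already assembled in \S\ref{meassmoothness}. Your case-matching and the identification of the relevant subsections for each necessary condition are all accurate.
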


\begin{proof} [Proof of  Corollary \ref{btlthm} and Corollary \ref{btldualthm}]
The positive   results  follow immediately from the corresponding results in
Propositions \ref{btlprepthm} and \ref{btldualprepthm}  
when  applied  to
$\{ f_k\}_{k=0}^\infty$   with $f_k=2^{ks}\La_kf$. The necessity of the conditions was proved in \S\ref{meassmoothness}.
\end{proof}

\section{Embeddings of Jawerth-Franke type}
\label{jawerthfrankesect}
Jawerth's and Franke's  versions of the Sobolev embedding theorem  were  reproved by Vyb\'iral \cite{vyb} using sequence spaces  which are discrete variants of  Besov and Triebel-Lizorkin spaces.
The proofs are inspired by  \cite{vyb}.  For reasons of brevity and preference  we  choose not to introduce   sequence spaces in the Lorentz category.

\subsection*{\it Preliminary considerations}
We first need a straightforward  Lorentz space version of Peetre's maximal theorem.
\begin{lemma}
Let $f_k\in \cS'$ be such that $\widehat f_k$ is supported in $\{\xi:|\xi|\le 2^k\}$.
Let
$$\fM_k f_k(x) =\sup_{|h|\le d 2^{-k}} |f_k(x+h)|\,.$$
Then for $0<p<\infty$, $0<q,r\le \infty$,
\[
\big\|\{\fM_k f_k\}\big\|_{L^{p,r}(\ell^q)} \lc C_{p,q,r} 
\big\|\{ f_k\}\big\|_{L^{p,r}(\ell^q)} .
\]
\end{lemma}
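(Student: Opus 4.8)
The plan is to reduce the statement to the classical Fefferman–Stein vector-valued maximal inequality for the Hardy–Littlewood maximal operator, transferred to the Lorentz setting by real interpolation. First I would dominate $\fM_k f_k$ pointwise by a Hardy–Littlewood maximal average of $|f_k|$: since $\widehat{f_k}$ is supported in $\{|\xi|\le 2^k\}$, a standard argument (write $f_k = f_k * \Phi_k$ where $\widehat{\Phi_k}$ equals $1$ on the support of $\widehat{f_k}$ and $\Phi_k = 2^{kd}\Phi(2^k\cdot)$ with $\Phi\in\cS$) gives, for any $0 < a \le 1$,
\[
\fM_k f_k(x) \lc \big( M(|f_k|^a)(x)\big)^{1/a},
\]
where $M$ is the Hardy–Littlewood maximal operator and the implied constant depends only on $a, d$ (and the choice of $\Phi$). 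This is the usual Peetre maximal function estimate and its proof does not involve the ambient norm at all, so it carries over verbatim.

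Next I would choose $a < \min\{p, q\}$ and raise to the power $a$: it suffices to prove
\[
\Big\| \Big(\sum_k \big(M(|f_k|^a)\big)^{q/a}\Big)^{a/q}\Big\|_{L^{p/a, r/a}} \lc \Big\| \Big(\sum_k |f_k|^{q}\Big)^{a/q}\Big\|_{L^{p/a,r/a}},
\]
using \eqref{lor-power} and \eqref{powers-for-seqnorms} to pass between the $\sigma$-th powers. Setting $g_k = |f_k|^a$, $P = p/a > 1$, $Q = q/a > 1$, $R = r/a$, this is exactly the assertion that the vector-valued maximal operator $\{g_k\} \mapsto \{M g_k\}$ is bounded on $L^{P,R}(\ell^Q)$ for $1 < P < \infty$, $1 < Q \le \infty$, $0 < R \le \infty$. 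For $R = P$ this is the Fefferman–Stein theorem; the general Lorentz case follows by real interpolation, since for fixed $Q$ one has $(L^{P_0}(\ell^Q), L^{P_1}(\ell^Q))_{\theta, R} = L^{P,R}(\ell^Q)$ (see \S\ref{interpolationpara} and \cite[\S5.2,5.6]{BL}) and $M$ (applied coordinatewise) is bounded on $L^{P_0}(\ell^Q)$ and $L^{P_1}(\ell^Q)$ for any $1 < P_0 < P < P_1 < \infty$. The endpoint $Q = \infty$ (hence the original $q = \infty$) is handled directly: $\sup_k M g_k \le M(\sup_k g_k)$, reducing to the scalar boundedness of $M$ on $L^{P,R}$, which again follows from interpolation off $L^{P_0}$, $L^{P_1}$, or directly from the weak-type $(1,1)$ and $L^\infty$ bounds.

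The only mild subtlety — and the step I would be most careful about — is the endpoint behavior in $q$ and $r$. When $q \le 1$ or $r \le 1$ one cannot interpolate directly at the level of $(p,q,r)$, which is precisely why the exponent $a < \min\{p,q\}$ is introduced: after taking $a$-th powers all three rescaled exponents $P, Q$ (and the interpolation parameter) exceed $1$, so the Banach-space interpolation machinery and the Fefferman–Stein inequality both apply. For $r = \infty$ with $q < \infty$ one interpolates with the second index fixed at $\infty$; for $q = \infty$ one uses the pointwise reduction above. Once these cases are organized, the estimate assembles immediately, and I would omit the routine verification of the Peetre pointwise bound, citing the standard references.
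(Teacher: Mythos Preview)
Your proposal is correct and follows essentially the same route as the paper: dominate $\fM_k f_k$ pointwise by $(M_{HL}(|f_k|^\rho))^{1/\rho}$ via Peetre's inequality, choose $\rho$ small enough to push all exponents above $1$, and then invoke the Fefferman--Stein vector-valued maximal inequality on $L^{p/\rho,r/\rho}(\ell^{q/\rho})$, the Lorentz version of which follows from the Lebesgue case by real interpolation. The paper simply takes $\rho<\min\{p,q,r\}$ and states the resulting bound in one line, whereas you organize the endpoint cases $q=\infty$ and $r=\infty$ more explicitly; the substance is the same.
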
 

\begin{proof} 
Let $M\ci{HL}$ be the Hardy-Littlewood maximal operator.
We have 
\Be\label{HL}\|\{M\ci{HL} g_k\}\|_{L^{p_0,r_0}(\ell^{q_0})} 
\le C(p_0,r_0, q_0) 
\|\{ g_k\}\|_{L^{p_0,r_0}(\ell^{q_0})} 
\Ee
for $1<p_0, r_0, q_0<\infty$. 
The version for $p_0=r_0$ was  proved by Fefferman and Stein \cite{fs} and the general version follows by real interpolation.

From  \cite{Pe}  we have the inequality 
$$\fM_k f_k(x) \le C_{\rho} \big(  M\ci{HL} (|f_k|^\rho)(x) \big)^{1/\rho}$$
for all $\rho>0$. We choose $\rho<\min \{p,q,r\}$, and apply \eqref{HL} with
$(p_0, r_0, q_0)= (p/\rho, r/\rho, q/\rho)$.
Then 
\begin{align*}
&\big\| \{ \fM_kf_k\}\big\|_{L^{p,r}(\ell^q)}
\lc 
\big\| \{ ( M\ci{HL} (|f_k|^\rho))^{1/\rho}\}\big\|_{L^{p,r}(\ell^q)}
\\&=
\big\| \{ M\ci{HL} (|f_k|^\rho)\}\big\|_{L^{\frac{p}\rho, \frac r\rho}(\ell^{\frac q\rho})}^{1/\rho} 
\lc 
\big\| \{ |f_k|^\rho\}\big\|_{L^{\frac p\rho, \frac r\rho}(\ell^{\frac q\rho})}^{1/\rho} 
=
\big\| \{ f_k\}\big\|_{L^{p,r}(\ell^q)}.\qedhere
\end{align*}
\end{proof}

\begin{theorem} \label{jawerththm} 
Suppose $0<d(1/p_0-1/p_1)=s_0-s_1$,  $0<q_0,q_1,r_0,r_1\le \infty$.
Then the embedding
$
F^{s_0}_{q_0} [L^{p_0,r_0}] \hookrightarrow
B^{s_1}_{q_1} [L^{p_1,r_1}]
$
holds if and only if $r_0\le q_1$.
\end{theorem}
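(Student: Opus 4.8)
The plan is to prove sufficiency, since the necessity of $r_0\le q_1$ is exactly the last displayed implication in the Corollary following Lemma~\ref{secondexamplelemma}. So assume $r_0\le q_1$. First I would normalize the target: because $\ell^{r_0}\hookrightarrow\ell^{q_1}$ one has $\|f\|_{B^{s_1}_{q_1}[L^{p_1,r_1}]}\le\|f\|_{B^{s_1}_{r_0}[L^{p_1,r_1}]}$, so it suffices to take $q_1=r_0$; and by the embedding \eqref{embineq} the case $r_1\ge r_0$ is dominated by $r_1=r_0$, so only $0<r_1\le r_0$ needs to be handled (the value $r_0=\infty$, which forces $q_1=\infty$, being the supremum analogue of what follows). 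Put $\sigma=s_0-s_1=d(1/p_0-1/p_1)>0$, $g_j=2^{js_0}\La_jf$, and define the Peetre-regularized blocks $\tilde g_j(x)=\sup_{|h|\le 2^{-j}}|g_j(x+h)|$. The Lorentz form of Peetre's maximal inequality proved just above gives $\|\{\tilde g_j\}\|_{L^{p_0,r_0}(\ell^{q_0})}\lesssim\|\{g_j\}\|_{L^{p_0,r_0}(\ell^{q_0})}=\|f\|_{F^{s_0}_{q_0}[L^{p_0,r_0}]}$, so, writing $\tilde G=(\sum_j\tilde g_j^{q_0})^{1/q_0}$ and using $2^{js_1}\|\La_jf\|_{p_1,r_1}\le 2^{-j\sigma}\|\tilde g_j\|_{p_1,r_1}$, the whole theorem reduces to the sequence inequality
\[
\Big(\sum_j 2^{-j\sigma r_0}\|\tilde g_j\|_{p_1,r_1}^{r_0}\Big)^{1/r_0}\ \lesssim\ \|\tilde G\|_{p_0,r_0},\qquad \|\tilde G\|_{p_0,r_0}\lesssim\|f\|_{F^{s_0}_{q_0}[L^{p_0,r_0}]}.
\]

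The heart of the matter is a Nikol'skii-type regularity of the $\tilde g_j$: for any $\lambda>0$ the set $\{\tilde g_j>\lambda\}$, if nonempty, contains a ball of radius $2^{-j}$ (if $|g_j(y)|>\lambda$ and $|h|\le 2^{-j}$ then $\tilde g_j(y-h)>\lambda$), hence has measure $0$ or $\gtrsim 2^{-jd}$; consequently $\tilde g_j^*(t)=\|\tilde g_j\|_\infty$ for $t<c\,2^{-jd}$ with $c$ dimensional. Combined with $\tilde g_j\le\tilde G$ (so $\tilde g_j^*\le\tilde G^*$ and $\|\tilde g_j\|_\infty\le\tilde G^*(c\,2^{-jd})$), inserting the two resulting bounds for $\tilde g_j^*$ into the formula \eqref{qn} and using $\sigma/d+1/p_1=1/p_0$ should give, with $\theta'=r_1(1/p_0-1/p_1)>0$,
\[
2^{-j\sigma r_1}\|\tilde g_j\|_{p_1,r_1}^{r_1}\ \lesssim\ \big((2^{-jd})^{1/p_0}\tilde G^*(c\,2^{-jd})\big)^{r_1}+\int_{c\,2^{-jd}}^\infty\Big(\tfrac{2^{-jd}}{t}\Big)^{\theta'}\big(t^{1/p_0}\tilde G^*(t)\big)^{r_1}\,\frac{dt}{t}.
\]
I would then raise to the power $r_0/r_1\ge1$ and sum in $j$. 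The first term sums to $\lesssim\int_0^\infty(t^{1/p_0}\tilde G^*(t))^{r_0}\,dt/t\approx\|\tilde G\|_{p_0,r_0}^{r_0}$, since $\tilde G^*$ is nonincreasing and $\{c\,2^{-jd}\}$ is geometric. For the second term, with $\Phi(t)=(t^{1/p_0}\tilde G^*(t))^{r_1}$ and $\Psi(s)=s^{\theta'}\int_s^\infty t^{-\theta'}\Phi(t)\,dt/t$, the inner integral at level $j$ is $\lesssim\Psi(c\,2^{-jd})$; comparing the geometric sum with $\int_0^\infty\Psi(s)^{r_0/r_1}\,ds/s$ and invoking the weighted Hardy inequality $\|\Psi\|_{L^{r_0/r_1}(ds/s)}\lesssim\|\Phi\|_{L^{r_0/r_1}(ds/s)}$ — which, after the substitution $s=e^{-u}$, is Young's convolution inequality with the integrable kernel $e^{-\theta' u}\bbone_{\{u>0\}}$, legitimate exactly because $\theta'>0$ — bounds it by $\int_0^\infty\Phi(s)^{r_0/r_1}\,ds/s=\int_0^\infty(s^{1/p_0}\tilde G^*(s))^{r_0}\,ds/s\approx\|\tilde G\|_{p_0,r_0}^{r_0}$.

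I expect the main obstacle to be securing the two ingredients that drive the $j$-summation. First, one must pass from $\La_jf$ to its Peetre maximal function so that the superlevel sets of the $j$-th block genuinely occupy volume $\gtrsim 2^{-jd}$ and $\tilde g_j^*$ is flat on an interval of that length; this regularity, rather than a naive Bernstein inequality, is what lets the tall thin part of $\tilde g_j$ be absorbed into $\tilde G^*$ near $t\sim 2^{-jd}$. Second, the decay in $j$ needed to sum the tails — equivalently the positive exponent $\theta'$ and the integrability of the Hardy kernel — is available \emph{precisely} because $p_0<p_1$, i.e.\ $\sigma>0$; when $p_0=p_1$ this gain disappears and the scheme collapses, in accordance with the necessity of $s_0-s_1=d/p_0-d/p_1>0$ established in \S\ref{scalingexample}.
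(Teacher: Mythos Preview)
Your argument is correct and follows essentially the same strategy as the paper's proof: reduce to $q_1=r_0$ and $r_1\le r_0$, pass to the Peetre-regularized blocks so that each $\tilde g_j^*$ is flat on an interval of length $\sim 2^{-jd}$, dominate by the rearrangement of a global envelope $\tilde G$, and then sum in $j$ using the gain $\sigma>0$ coming from $p_0<p_1$.

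The only differences are cosmetic. The paper replaces $\tilde g_j$ by the dyadic-cube step function $g_k(x)=2^{ks_0}\sup_{y\in Q_k(x)}|\La_kf(y)|$, which is \emph{exactly} constant on dyadic cubes and hence has rearrangement exactly constant on intervals of length $2^{-kd}$; it then writes $\|g_k\|_{p_1,r_1}$ as a discrete sum over $n$, dyadically groups $n$, and interchanges sums via the triangle inequality in $L^{\rho/r_1}$. Your version instead extracts the flat part of $\tilde g_j^*$ from the ball-containment property of the superlevel sets of $\tilde g_j$, splits the $L^{p_1,r_1}$ integral at $t\sim 2^{-jd}$, and handles the tail with a continuous Hardy/Young inequality. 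These are equivalent bookkeeping devices; the substantive ingredients (Peetre maximal inequality, $\tilde g_j^*\le \tilde G^*$, and the summability from $1/p_0-1/p_1>0$) are identical. One tiny cosmetic point: the inequality $\|\tilde g_j\|_\infty\le \tilde G^*(c\,2^{-jd})$ should strictly read $\tilde G^*(c'\,2^{-jd})$ for any $c'<c$, by right-continuity of $\tilde G^*$; this is harmless.
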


\begin{proof} The necessity of the condition $r_0\le q_1$ has been established in \S\ref{franke-jawerth-nec}.

 Let $Q_k(x)$ be the unique dyadic cube of sidelength $2^{-k}$ which contains $x$, (the sides being half open intervals). Set
 \begin{align*}
 g_k(x)&= 2^{ks_0}  \sup_{y\in Q_k(x)}
  \La_kf(y),
 \\
 G(x)&=\sup_k \,2^{ks_0}  \fM_k \La_k f(x).
\end{align*}
Clearly \[g_k(x) \le 2^{ks_0}\, \fM_k \La_kf (x) \le G(x)\]
and therefore $g_k^*(t)\le G^*(t)$.

Since $g_k$ is constant on the dyadic cubes of sidelength $2^{-k}$ we see that $g_k^*$ is constant on dyadic intervals of length $2^{-kd}$.
In particular
\begin{align*}
\|g_k\|_{p_1,r_1} &= \Big( \frac{r_1}{p_1} \int_0^\infty t^{r_1/p} g_k^*(t)^{r_1} \frac{dt}{t}\Big)^{1/r_1}
\\&\approx \Big( \sum_{n=1}^\infty  
\big[(2^{-kd} n)^{1/p_1} g_k^*(2^{-kd}(n-1)) 
\big]^{r_1} \frac{ 2^{-kd}}{2^{-kd}n}\Big)^{1/r_1}\,.
\end{align*}

We now begin with the proof of the sufficiency of the condition $q_1\ge r_0$.  Since the 
$ B^{s}_{q} (L^{p,r})$, $ F^{s}_{q} (L^{p,r})$ norms increase when $r$ decreases or when $q$ decreases, it suffices to consider the case $q_1=r_0=:\rho$ and to prove 
for $0<\rho\le \infty$  and $r_1<\rho$ 
\Be\label{jawerthextreme}
F^{s_0}_{\infty}[L^{p_0,\rho}] \hookrightarrow B^{s_1}_{\rho}[L^{p_1,r_1}].
\Ee
 We write the proof for $\rho<\infty$ but this is not essential as the case $\rho=\infty$ will only require  notational changes. 
 
 Now
 \begin{align*}
 \|f\|_{ B^{s_1}_{\rho} (L^{p_1,r_1})} &=\Big(\sum_{k=0}^\infty 2^{ks_1 \rho} \|\La_k f\|_{p_1,r_1}^{\rho} \Big)^{1/\rho}
 \\&= 
 \Big(\sum_{k=0}^\infty  \| 2^{ks_0} \La_k f\big \|_{p_1,r_1}^{\rho}2^{-kd(\frac{1}{p_0}-\frac {1}{p_1})\rho }\Big)^{1/\rho}
 \end{align*}
 Here we have used the relation $d/p_0 - d/p_1\,=s_0-s_1$.
 The last displayed expression is dominated by
\begin{align*}
&\Big(\sum_{k=0}^\infty  2^{-kd(\frac 1{p_0}-\frac 1{p_1})\rho} \| g_k\|_{p_1,r_1}^{\rho} 
\Big)^{1/\rho}
 \\
 &\lc \Big( \sum_{k=0}^\infty
 2^{-kd(\frac{1}{p_0}-\frac {1}{p_1})\rho}\Big[ \sum_{n=1}^\infty (2^{-kd}n)^{r_1/p_1} 
 g_k^*(2^{-kd}(n-1))^{r_1} \frac1{n}\Big]^{\rho /r_1} \Big)^{1/\rho}
 \\
 &\le \Big(\sum_{k=0}^\infty \Big(\sum_{n=1}^\infty n^{r_1(\frac 1{p_1}-\frac{1}{p_0})-1 } G^*(2^{-kd}(n-1))^{r_1}
  (2^{-kd}n)^{r_1/p_0}
 \Big)^{\rho/r_1}\Big)^{1/\rho}\,.
 \end{align*}
The last  expression is comparable to
\begin{align*}
 &\Big(\sum_{k=0}^\infty \Big(\sum_{j=0}^\infty 2^{jr_1( 1/{p_1}-{1}/{p_0}) } G^*(2^{-kd+j-1})^{r_1}
  (2^{-kd+j})^{r_1/p_0}
 \Big)^{\rho/r_1}\Big)^{1/\rho}
 \\
  & \lc
\Big(\sum_{j=0}^\infty 2^{jr_1({1}/{p_1}-{1}/{p_0})} 
 \Big( \sum_{k=0}^\infty G^*(2^{-kd+j-1})^{\rho} (2^{-kd+j})^{\rho/p_0} \Big)^{r_1/\rho} \Big)^{1/r_1}
 \\  & \lc
\Big(\sum_{j=0}^\infty 2^{jr_1(1/p_1-1/p_0)} 
 \Big( \sum_{k=0}^\infty \int_{2^{-kd+j-1}}^{2^{-kd+j}} t^{\rho/p_0} G^*(t)^{\rho} \frac{dt}{t} \Big)^{r_1/\rho }\Big)^{1/r_1}.
 \end{align*}
 Here we have used the triangle inequality in $L^{\rho/r_1}$  (as  $\rho/r_1\ge 1$).
Now for fixed $j$ the intervals $[2^{-kd+j-1}, 2^{-kd+j}]$ have disjoint interior and therefore the last expression is dominated by
\begin{align*}
&\Big(\sum_{j=0}^\infty 2^{jr_1(1/p_1-1/p_0)} \|G\|_{p_0,\rho}^{r_1}\Big)^{1/r_1}
\\ &\lc \|G\|_{p_0,\rho} \lc \big\|\sup_k 2^{ks_0}| \La_k f|\big\|_{p_0,\rho}
=
\|f\|_{F^{s_0}_\infty[L^{p_0,\rho}]}
\end{align*}
and \eqref{jawerthextreme} is proved. 
\end{proof}


\begin{theorem}\label{frankethm}
Suppose $0<d(1/p_0-1/p_1)=s_0-s_1$,  $0<q_0,q_1,r_0,r_1\le \infty$.
Then the embedding
$B^{s_0}_{q_0} [L^{p_0,r_0}] \hookrightarrow
F^{s_1}_{q_1} [L^{p_1,r_1}]$ holds if and only if $q_0\le r_1$.
\end{theorem}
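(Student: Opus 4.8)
The plan is to establish this as the "dual" counterpart of Theorem \ref{jawerththm}, exploiting the same philosophy but with the roles of the sequence-space and Lorentz-space exponents interchanged. The necessity of $q_0\le r_1$ has already been recorded in \S\ref{franke-jawerth-nec} (the third implication in the Corollary following Lemma \ref{secondexamplelemma}), so only sufficiency remains. As in the proof of Theorem \ref{jawerththm}, monotonicity of the $B^{s_0}_{q_0}[L^{p_0,r_0}]$ norm in $r_0$ and of the $F^{s_1}_{q_1}[L^{p_1,r_1}]$ norm in $q_1$ lets me reduce to the extreme case $q_0=r_1=:\rho$ and prove, for $0<\rho\le\infty$ and $r_0<\rho$ (the case $r_0=\rho$ being no harder),
\Be\label{frankeextreme}
B^{s_1}_{\rho}[L^{p_0,r_0}]\hookrightarrow F^{s_1}_{\infty}[L^{p_1,\rho}]
\Ee
— wait, more carefully, the reduction should land on $B^{s_0}_{\rho}[L^{p_0,r_0}]\hookrightarrow F^{s_1}_{\rho}[L^{p_1,\rho}]$ with $\rho$ on both the $q_0$ and $r_1$ slot, and I will want to feed through $F^{s_1}_{\rho}\hookleftarrow$ something with $q_1=\infty$ on the target. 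I will write the proof for $\rho<\infty$; the case $\rho=\infty$ requires only notational changes.

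The core estimate goes through Peetre's maximal function. Set $g_k = 2^{ks_0}\fM_k\La_k f$, so that by the Lorentz–Peetre maximal inequality proved just above, $\|\{g_k\}\|_{L^{p_1,\rho}(\ell^\infty)}$ and more usefully the pointwise chain $|2^{ks_1}\La_k f(x)|\le g_k(x)$ hold. The key point is that $g_k$ is essentially constant on dyadic cubes of sidelength $2^{-k}$, so its rearrangement $g_k^*$ is constant on dyadic intervals of length $2^{-kd}$; hence
\[
\Big\|\Big(\sum_k |2^{ks_1}\La_k f|^\rho\Big)^{1/\rho}\Big\|_{p_1,\rho}
\lc \Big\|\Big(\sum_k g_k^\rho\Big)^{1/\rho}\Big\|_{p_1,\rho}
\approx \Big(\sum_n (2^{-? }n)^{\rho/p_1}\big[\textstyle\sum_k g_k^*(\cdot)^\rho\big]\,\tfrac1n\Big)^{1/\rho},
\]
and I rewrite the whole quantity using the scaling identity $d/p_0-d/p_1=s_0-s_1$ exactly as in Theorem \ref{jawerththm}, but now organizing the double sum over $(k,n)$ (equivalently $(k,j)$ after the substitution $n\sim 2^j$) so that I apply the triangle inequality in $L^{\rho/r_0}$ — which is legitimate because $\rho/r_0\ge 1$ — to peel off the $k$-sum, collapse the disjoint dyadic intervals $[2^{-kd+j-1},2^{-kd+j}]$ into a single $L^{p_0,r_0}$-norm of the majorant, and finally sum the resulting geometric series in $j$ (geometric because $1/p_1<1/p_0$). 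This yields a bound by $\|\sup_k 2^{ks_0}|\La_k f|\|_{p_0,r_0}$-type quantities, and one more application of the maximal inequality and of $\|\La_k f\|_{p_0,r_0}\le(\sum\|\cdot\|^{r_0})^{1/r_0}$ brings it back to $\|f\|_{B^{s_0}_\rho[L^{p_0,r_0}]}$.

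I expect the main obstacle to be bookkeeping rather than a genuine difficulty: getting the direction of every triangle inequality right (here $L^{\rho/r_0}$ with $\rho\ge r_0$, mirroring the $L^{\rho/r_1}$ with $\rho\ge r_1$ used in Theorem \ref{jawerththm}), and making sure the rearrangement-vs-sum manipulation for $g_k^*$ on $L^{p_1,\rho}$ is carried out in the slot ($\ell^\rho$-valued $L^{p_1,\rho}$, i.e.\ $F^{s_1}_\rho$) rather than the slot used in the Jawerth proof. A secondary subtlety is the reduction step: since $B^{s_0}_{q_0}[L^{p_0,r_0}]\hookrightarrow B^{s_0}_{\rho}[L^{p_0,r_0}]$ requires $q_0\le\rho$ and $F^{s_1}_{\rho}[L^{p_1,r_1}]\hookrightarrow F^{s_1}_{q_1}[L^{p_1,r_1}]$ requires $r_1\ge\rho$, the choice $\rho$ between $q_0$ and $r_1$ is consistent precisely when $q_0\le r_1$, which is exactly the hypothesis — so I should state this reduction explicitly. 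Everything else is a faithful transcription of the Jawerth argument with $p_0\leftrightarrow p_1$ roles swapped appropriately, and I will omit the routine parts accordingly.
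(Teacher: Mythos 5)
There is a genuine gap, and it starts already at the reduction step. You propose to reduce to the single extremal case $q_0=r_1=:\rho$ \emph{with $q_1=\rho$ as well}, i.e.\ to $B^{s_0}_{\rho}[L^{p_0,r_0}]\hookrightarrow F^{s_1}_{\rho}[L^{p_1,\rho}]$. But $F^{s_1}_{\rho}[L^{p_1,\rho}]\hookrightarrow F^{s_1}_{q_1}[L^{p_1,r_1}]$ requires $\rho\le q_1$, so this only yields the theorem when $q_1\ge r_1$; the substantive content of the Franke-type statement is precisely that $q_1$ may be taken \emph{arbitrarily small}. The correct extremal target of the reduction is $B^{s_0}_{\rho}[L^{p_0,\infty}]\hookrightarrow F^{s_1}_{q}[L^{p_1,\rho}]$ for \emph{every} $q>0$ (with $r_0=\infty$ on the source, which you also do not exploit), and any proof must confront a small inner exponent $q$ head-on. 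Your sketch never explains how the $\ell^{q_1}$-sum inside the Lorentz norm is handled when $q_1$ is small, and your reduction quietly removes exactly that difficulty.

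The second gap is in the core estimate: the Franke direction is \emph{not} a role-swapped transcription of the Jawerth argument. In Theorem \ref{jawerththm} one passes from the target structure $\ell^{\rho}(L^{p_1,r_1})$ to the source structure $L^{p_0,\rho}$ of $G=\sup_k 2^{ks_0}\fM_k\La_kf$, and Minkowski in $L^{\rho/r_1}$ ($\rho/r_1\ge1$) moves the sums in the favorable direction; crucially the $\sup_k$ sits on the source because the source is a Triebel--Lizorkin space with $q_0=\infty$. Here the source is a Besov space, so there is no such $\sup_k$ to fall back on, and the interchange you gesture at (``peel off the $k$-sum by the triangle inequality in $L^{\rho/r_0}$'') would have to move an $\ell^{q}$-sum \emph{out of} an $L^{p_1,\rho}$-norm and into an $\ell^{\rho}$-sum of $L^{p_0,\infty}$-norms, which is the hard, non-Minkowski direction. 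The paper resolves this by duality: it majorizes $2^{ks_1}|\La_kf|$ by functions $h_k$ constant on dyadic cubes, writes $\|\sum_k h_k^q\|_{p_1/q,\rho/q}$ as a supremum of pairings $\sum_k\int h_k^q g\,d\mu$ over $\trn g\trn_{(p_1/q)',(\rho/q)'}\le1$, and then uses rearrangements, $g^{**}$, and H\"older in $k$ with exponents $\rho/q$ and $(\rho/q)'$ to split off $\|f\|_{B^{s_0}_{\rho}[L^{p_0,\infty}]}$. Some device of this kind (duality, or an equivalent mechanism for arbitrarily small $q$) is the missing idea; without it the argument as sketched does not close.
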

\begin{proof} For the necessity of the condition $q_0\le r_1$ see 
\S\ref{franke-jawerth-nec}. It now suffices to prove for any $q>0$ and for $0<\rho\le \infty$,
\Be\label{frankeextreme}
B^{s_0}_\rho[L^{p_0,\infty}] \hookrightarrow
F^{s_1}_{q} [L^{p_1,\rho}].
\Ee
Assume that $f\in B^{s_0}_{\rho}[L^{p_0,\infty}]$.
Define
$$
h_k(x) = 2^{ks_1} \sum_{Q\in \fQ_k}\bbone_Q(x) \inf_Q \fM_k(\La_k f)
$$
where $\fQ_k$ denotes the grid of dyadic cubes with side length $2^{-k}$. Then
\begin{align}
\|f\|_{F^{s_1}_q(L^{p_1,\rho})}&=\Big\|\sum_{k=0}^\infty 2^{ks_1q}|\La_k f|^q \Big\|_{p_1/q,\rho/q}^{1/q}
\le \Big\| \sum_{k=0}^\infty h_k^q \Big\|_{p_1/q,\rho/q}^{1/q}
\notag
\\
&=\Big(\sup_{\|g\|_{(p_1/q)',(\rho/q)'}=1} \sum_{k=0}^\infty \int h_k(x)^q g(x) \,dx \Big)^{1/q}.
\label{duality}
\end{align}
Note that the rearrangement function of $h_k$ is constant on the intervals 
$[2^{-kd}(n-1), 2^{-kd}n)$ for $n=1,2,\dots$.
Thus for fixed $k$
\begin{align*} 
&\Big|\int h_k(x)^q g(x) dx\Big| \le \int_0^\infty (h_k^q)^*(t) g^*(t) dt 
\\
&= \int_0^\infty (h_k^*(t))^q g^*(t) dt  = \sum_{n=1}^\infty h_k^*(2^{-kd}(n-1))\int_{2^{-kd}(n-1)}^{2^{-kd}n} g^*(t) dt
\\
&\le \sum_{n=1}^\infty (h_k^*(2^{-kd}(n-1)))^q 2^{-kd} g^{**} (2^{-kd}n)
\end{align*}
We sum in $k$ and get
\begin{align*} 
&\sum_{k=0}^\infty\sum_{n=1}^\infty (h_k^*(2^{-kd}(n-1)))^q 2^{-kd} g^{**} (2^{-kd}n)
\\
&\lc \sum_{k=0}^\infty\sum_{l=0}^\infty(h_k^*(2^{ld-kd-1}))^q 2^{ld-kd} g^{**} (2^{ld-kd})
\\
&= \sum_{l=0}^\infty \sum_{k=0}^\infty 2^{(ld-kd)q/p_0}2^{k(s_0-s_1)q} h_k^*(2^{ld-kd-1})^q \times \\
&\qquad\qquad\qquad  
2^{(ld-kd)(1-q/p_0)}
 g^{**} (2^{ld-kd}) 
2^{-kd(1/p_0-1/p_1)q}
\end{align*} 
where we have used $s_0-s_1=d/p_0-d/p_1$. By H\"older's inequality
the last displayed expression is dominated by
\begin{align*} 
& \sum_{l=0}^\infty \Big( \sum_{k=0}^\infty 2^{(ld-kd)\rho/p_0}2^{k(s_0-s_1)\rho} h_k^*(2^{ld-kd-1})^\rho\Big)^{q/\rho} \times \\
&\qquad\qquad\qquad  
\Big(\sum_{k=0}^\infty \Big[ 2^{(ld-kd)(1-q/p_0)}
 g^{**} (2^{ld-kd}) 
2^{-kd(1/p_0-1/p_1)q}\Big]^{(\rho/q)'}\Big)^{1-q/\rho}
\\
& \lc \Big( \sum_{k=0}^\infty2^{k(s_0-s_1)\rho}   \sup_{m\ge 0} 2^{(md-kd-1)\rho/p_0}
h_k^*(2^{md-kd-1})^\rho\Big)^{q/\rho} 
\times \\
&\qquad\qquad\quad\sum_{l=0}^\infty 
\Big(\sum_{k=0}^\infty \Big[ 2^{(ld-kd)(1-q/p_0)}
 g^{**} (2^{ld-kd}) 
2^{-kd(1/p_0-1/p_1)q}\Big]^{(\rho/q)'}\Big)^{1-q/\rho}.
\end{align*}
Now
we have 
\begin{align*}
&\Big( \sum_{k=0}^\infty2^{k(s_0-s_1)\rho}   \sup_{m\ge 0} 2^{(md-kd-1)\rho/p_0}
h_k^*(2^{md-kd-1})^\rho\Big)^{q/\rho} 
\\&\lc 
\Big( \sum_{k=0}^\infty2^{ks_0\rho}  \| 2^{-ks_1} h_k \|_{p_0,\infty}^\rho\Big)^{q/\rho}
\lc 
\Big( \sum_{k=0}^\infty2^{ks_0\rho}  \| \fM_k (\La_k f) \|_{p_0,\infty}^\rho\Big)^{q/\rho}
 \\&\lc 
\Big( \sum_{k=0}^\infty2^{ks_0\rho}  \| \La_k f \|_{p_0,\infty}^\rho\Big)^{q/\rho} = 
\|f\|_{B^{s_0}_\rho[L^{p_0,\infty}]}^q.
\end{align*}
Finally we estimate,
summing $\sum_{l=0}^\infty 2^{-ld(1/p_0-1/p_1)q}\lc 1$, 
\begin{align*}&\sum_{l=0}^\infty 
\Big(\sum_{k=0}^\infty \Big[ 2^{(ld-kd)(1-q/p_0)}
 g^{**} (2^{ld-kd}) 
2^{-kd(1/p_0-1/p_1)q}\Big]^{(\rho/q)'}\Big)^{1-q/\rho}
\\ &\lc 
\sup_{l\ge 0} \Big(\sum_{k=0}^\infty \Big[ 2^{(ld-kd)(1-q/p_0)}
 g^{**} (2^{ld-kd}) 
2^{(ld-kd)(1/p_0-1/p_1)q}\Big]^{(\rho/q)'}\Big)^{1-q/\rho}
\\
&=\sup_{l\ge 0} \Big( \sum_{k=0}^\infty \big[ 2^{(kd-ld)(1-q/p_1)} g^{**} (2^{ld-kd}) \big] ^{(\rho/q)'} 
\Big)^{1-\rho/q} \lc \|g\|_{(p_1/q)' ,(\rho/q)'} \lc 1.
\end{align*}
 Going back to \eqref{duality} we see that 
 $\|f\|_{F^{s_1}_q[L^{p_1,\rho}]} \lc \|f\|_{B^{s_0}_\rho[L^{p_0,\infty}]}$ and the proof of \eqref{frankeextreme}  is complete.
 \end{proof}


\section{Conclusion of the proofs of  Theorems \ref{BinFthm}, \ref{FinBthm},  \ref{BinBthm},  \ref{FinFthm}}\label{conclusion}

The necessity of all conditions was shown in \S\ref{meassmoothness}. 
The proofs of  the embeddings in parts  (iv)-(vi) of  Theorem \ref{BinFthm} and Theorem \ref{FinBthm} were shown in  \S\ref{vectvalemb} (see Corollaries \ref{btlthm} and \ref{btldualthm}).  The embeddings in part (iii)  of Theorem \ref{BinFthm}  
and  Theorem \ref{FinBthm}  are  covered by 
Theorem \ref{frankethm}   and Theorem \ref{jawerththm}, respectively.

We consider part (ii) of Theorem \ref{BinFthm}. Let $s_0>s_1$, $r_0\le r_1$, $p_0=p_1=p$.
 Let $\eps>0$ such that $s_0-\eps>s_1$, and let $v<\min \{q_0, p_1, q_1,r_1\}$. By parts (iv) or (v)
of Theorem \ref{BinFthm} we have $B_v^{s_0-\eps}[L^{p,r_0}]\hookrightarrow F^{s_0-\eps}_{q_1}[L^{p,r_1}]$ 
and (ii) follows if we combine this with the trivial embeddings
$B^{s_0}_{q_0}[L^{p,r_0}]\hookrightarrow 
B^{s_0-\eps}_{v}[L^{p,r_0}]$ and 
$F^{s_0-\eps}_{q_1}[L^{p,r_1}]\hookrightarrow 
F^{s_1}_{q_1}[L^{p,r_1}]$. 

The proof of part (ii) of Theorem \ref{FinBthm} is similar. Moreover if we use part (iii) in Theorems 
\ref{BinFthm} and \ref{FinBthm} the proofs of part (i) in those theorems  follows the same pattern as above.

Finally we consider Theorems \ref{BinBthm} and \ref{FinFthm}.
Part (iv) of these theorems are proved  by using  embeddings of $L^{p,r}$ spaces and of $\ell^q$ spaces.

To see part (iii) of Theorem \ref{BinBthm}, assume $q_0\le q_1$ and let $\widetilde p$ and $\widetilde s$ be such that
$p_0<\widetilde p<p_1$, $s_1<\widetilde s<s_0$ and 
$\widetilde s-s_1=d/\widetilde p-d/p_1$ and thus $s_0-\widetilde s= d/p_0-d/\widetilde p$.
Pick $\widetilde r$ such that $q_0\le \widetilde r\le q_1$ and then 
$$B^{s_0}_{q_0}[L^{p_0,r_0}] \hookrightarrow F^{\widetilde s}_{\widetilde q}[L^{\widetilde p,\widetilde r}] \hookrightarrow B^{s_1}_{q_1}[L^{p_1,r_1}]
$$
for arbitrary $\widetilde q$, 
by 
Theorem \ref{frankethm} for the first embedding and Theorem \ref{jawerththm} for the second.

To see part (iii) of Theorem \ref{FinFthm} assume $r_0\le r_1$. Pick $\widetilde q$ such that $r_0\le \widetilde q\le r_1$ and then
$$F^{s_0}_{q_0}[L^{p_0,r_0}] \hookrightarrow B^{\widetilde s}_{\widetilde q}[L^{\widetilde p,\widetilde r}] \hookrightarrow F^{s_1}_{q_1}[L^{p_1,r_1}]
$$
for arbitrary $\widetilde r$, 
by 
Theorem \ref{jawerththm} for the first embedding and Theorem \ref{frankethm} for the second. 
 
 Given parts (iv), (iii) of Theorems \ref{BinBthm} and \ref{FinFthm} the parts (i), (ii) in the noncritical ranges can be  obtained by the argument  above.

\appendix

\section{Remarks on Mikhlin-H\"ormander multipliers} \label{Hoersect}
Part (iii) of Theorems 
 \ref{BinFthm}  and  \ref{FinBthm} 
(i.e. Theorems 
  \ref{frankethm} and \ref{jawerththm})
 can be applied to clarify the connection  between  certain sharp versions of 
the Mikhlin-H\"ormander multiplier theorem (\cite{hoer}). Set $T_mf=\cF^{-1}[m\widehat f]$.  
Let $\varphi$ be a nontrivial  radial smooth functions which is compactly supported in $\bbR^d\setminus \{0\}$.
We first recall the endpoint bound
\Be \label{see-a}\|T_m\|_{L^p\to L^{p,2}}  
\lc \sup_{t>0} \| \varphi m(t\cdot) \|_{B^{s}_1 [L^{d/s}]}, \quad s=d(1/p-1/2), \,\,1<p\le 2,
\Ee 
which was proved by one of the authors  in \cite{seeger1}. Moreover one gets $H^1\to L^{1,2}$ boundedness under the condition  
$\sup_{t>0} \| \varphi m(t\cdot) \|_{B^{d/2}_1 [L^2]}<\infty$,
see \cite{seeger2}.  Note that \eqref{see-a} immediately  implies that
\begin{subequations} 
\Be\label{see-b}\|T_m\|_{L^p\to L^{p}} 
\lc \sup_{t>0} \| \varphi m(t\cdot) \|_{B^{s}_1 [L^{d/s}]}, \quad d|1/p-1/2|<s<d.
\Ee
Indeed, by the standard Sobolev imbedding theorem for Besov spaces we may assume that $s<d/2$. Define $p_0$ by $d(1/p_0-1/2)=s$, so that $1<p_0<p<2$. Then  \eqref{see-a} gives $L^{p_0}\to L^{p_0,2}$ boundedness, and by the Marcinkiewicz interpolation theorem, and a subsequent duality argument we get \eqref{see-b}.


A  recent
result  by Grafakos and Slav\'ikov\'a \cite{grafakos-slavikova} 
states that for $1<p<\infty$ 
\begin{align}\label{grsl}\|T_m\|_{L^p\to L^p} 
&\lc \sup_{t>0} \| (I-\Delta)^{s/2}[\varphi m(t\cdot)] \|_{L^{d/s,1}}, 
\\
&\approx \sup_{t>0}\|\varphi m(t\cdot)\|_{F^s_2(L^{d/s,1})}, 
\quad d|1/p-1/2|<s<d, \nonumber
\end{align}
\cf. \eqref{besselpot}. 
\end{subequations}
For fixed $s$ the  relation between the norms on the right hand side in \eqref{see-b} and \eqref{grsl} is not immediately clear. The spaces  
$F^s_2[L^{d/s,1}]$ and $B^s_1[L^{d/s}]$ are not comparable;  we have 
$F^s_2[L^{d/s,1}]
\nsubseteq B^s_1[L^{d/s}]$ by \S\ref{q0q1necsect},  and we get 
 $B^s_1[L^{d/s}] \nsubseteq F^s_2[L^{d/s,1}]$ by the necessity of the condition $r_0\le r_1$ in \S\ref{p0lep1}.
However we do have the embeddings
\Be \label{applH}
B^{s_3}_1[L^{d/s_3}]
\hookrightarrow
F^{s_2}_q[L^{d/s_2, 1}]
\hookrightarrow
B^{s_1}_1[L^{d/s_1}], \quad s_1<s_2<s_3,\,\, q>0.
\Ee
The first inclusion  in \eqref{applH}  follows from Theorem \ref{BinFthm} (iii) and the second from Theorem \ref{FinBthm} (iii). Since both 
statements \eqref{see-b}, \eqref{grsl} involve the same  open $s$-interval we may apply  \eqref{applH} for $s_1>d|1/p-1/2|$ and $q=2$ to see  
   that they 
  cover $L^p$ boundedness for exactly the same set  of multiplier transformations.

\medskip 

\noindent{\it Further directions and open problems.}
\subsection{\it} 
 Is  there  an endpoint inequality such as  \eqref{see-a} in terms of localized
$F^{s}_2[L^{d/s,1}]$ spaces, when $s=d|1/p-1/2|$?

\subsection{\it}
It was proved in \cite{seeger3}  that
\Be\label{seeger3result}\|T_m\|_{L^p\to L^{p}} \lc \sup_{t>0} \| \varphi m(t\cdot) \|_{B^{d|1/p-1/2|}_1 [L^{q}]}\Ee for 
$1/q>|1/p-1/2|$,  $1<p<\infty$ (see also  \cite{bs}, \cite{seeger3} for corresponding results on Hardy spaces).  In the first   version of our  paper posted as arXiv:1801.10570 in January 2018  we raised the question  whether
 the space $B^{d|1/p-1/2|}_1 [L^{q}]$ in this result  can be replaced with the Lorentz-Sobolev space 
$H^{d|1/p-1/2|}_{(q,r)}\equiv F^{d|1/p-1/2|}_2 [L^{q,r}]$.  A negative answer to this question was  recently given by  Slav\'ikov\'a \cite{slavikova}.
However the following question remains interesting.

\subsubsection{\it Question} \label{openqu} Let $1/q>|1/p-1/2|$. 
For which $u>1$, if any, can one replace $B^{d|1/p-1/2|}_1 [L^{q}]$ 
in  inequality \eqref{seeger3result} 
 with $B^{d|1/p-1/2|}_u [L^{q}]$?
 
 The parameter range $1<u\le p$ seems of particular interest, as will be discussed in the following subsection.

\subsection{\it }\label{moreonconj} 
Let $1\le p\le 2$ and let $M^p$ be the usual Fourier multiplier space. We  note the following obvious chain of inequalities
$$\sup_{t>0} \|\cF^{-1}[\varphi m(t\cdot)]\|_{L^p} 
\lc 
\sup_{t>0} \|\varphi m(t\cdot)\|_{M^p} \lc  \|m\|_{M^p}.$$
The corresponding inclusions for the spaces  defined by the above norms are known to be strict.
It is of interest to identify function spaces  for which
$\sup_{t>0} \|\cF^{-1}[\varphi m(t\cdot)]\|_{L^p} $ is finite and the corresponding multiplier question is open. As an example one has  the inequality
\Be \label{locBernsteinvariant}
\sup_{t>0} \|\cF^{-1} [\varphi m(t\cdot)]\|_{L^p} \lc 
\sup_{t>0} \|\varphi m(t\cdot)\|_{B^{d(1/p-1/2)}_p(L^{2})}
\Ee 
by using Bernstein's inequality. In view of the compact support of $\varphi$ we can  replace 
$B^{d(1/p-1/2)}_p(L^{2})$ with 
 $B^{d(1/p-1/2)}_p(L^{q,r})$ for $2<q<\infty$, $0<r\le \infty$.

We observe that a slight modification of the above mentioned example  by Slav\'ikov\'a  \cite{slavikova} exhibits a sequence of functions  $g_K$  which is bounded 
in $F^{d(1/p-1/2)}_2(L^{q,r})$, with $2\le q<\infty$, $r>0$, for which 
$\sup_{t>0}  \|\cF^{-1}[\varphi g_K(t\cdot)]\|_{L^p} $ is unbounded. Indeed one  may choose this  sequence  to be  supported in $\{\xi: 1/4<|\xi|<  2\}$.

To define  $g_K$   let 
$\Psi$ be any  nontrivial $ C^\infty_c(\bbR^d)$ function  
supported on a ball of radius $1/10$ centered at the origin of  $\bbR^d$. 
Let $K$, $N$ be large integers let $(k,N)\mapsto \nu(k,N)$ be an enumeration of $\bbN^d\times\bbN$ and denote by $r_\nu$ the Rademacher functions.
Let, for $0\le s< \floor {M+1}$ and $t\in [0,1]$, 
\begin{multline*}g_{s, K}(\xi,t)
= \sum_{K/2<N\le K} (K2^N)^{-s} h_{K,N}(\xi,t)\\
\text{where }  h_{K,N}(\xi,t)=
\sum_{\substack {\ka\in \bbN^d:\\ N<2^{-N}|\ka|<N+1/2}} r_{\nu(k,N)}(t) \Psi(2^NK (\xi - 2^{-N} K^{-1}\ka)).
\end{multline*}
Then $\xi\mapsto g_s(\xi,t)$ is supported in the annulus $\sA_1=\{1/4<|\xi|\le 1\}$.
The calculation in \cite{slavikova} (using Khinchine's inequality) shows that 
\Be\label{sl-lowerbd}
\Big(\int_0^1 \big\|\cF^{-1}[ g_{s,K}(\cdot,t)]\big\|_p^p dt\Big)^{1/p} \gc_s  K^{1/p-1/2}, \text{ when } s=d(\frac 1p-\frac 12).
\Ee
Moreover, 
\Be
\sup_{t\in [0,1]} \|g_{s,K} (\cdot,t) \|_{F^s_2(L^{q,r})} \le C(s),  \quad 1< q<\infty, \, r>0,
\Ee
with polynomial growth in $s$.
This can be verified by direct calculation as in \cite{slavikova}. Alternatively,  one can immediately check this inequality for $r=q$ and $s=0,1,2, \dots$ by using the standard Sobolev norms, and then apply an analytic interpolation argument to get the same statement for all $s\ge 0$. Fixing $s$,  one can then apply real interpolation (with varying $q$) to get the statement for  all $r>0$, $s\ge 0$.

In view of question \ref{openqu} it is instructive to replace 
$F^s_2(L^{q,r})$ by $B^s_u(L^{q,r})$ for some $q\ge 2$, $u\le q$.
We shall now impose  the assumption  that the generating function $\Psi$ satisfies 
$\int\Psi(y) P(y) dy=0$ for all polynomials of degree at most $M$, for some $M> s+2d$. It is easy to check that the argument \cite{slavikova}  for the lower bound \eqref{sl-lowerbd} still goes through. We also have  
\Be\label{BesovgKupper}
\sup_{t\in [0,1]} \|g_{s,K}(\cdot,t) \|_{B^s_u(L^{q,r}) }
\lc K^{1/u-1/q}, \quad 2\le q<\infty, \, r>0,\, u\le q.
\Ee 
This is verified by using the Littlewood-Paley type characterization  using compactly supported $\psi_k$  in \eqref{localmeanschar}, imposing the condition that $\psi_k$ for $k>0$ have a large number of  vanishing moments, say more than $s+d+A$ for large $A$.
One  verifies that  for $k\ge 0$
$$\|\psi_k* h_{K,N}\|_{L^{q,r}} \lc K^{-1/q} \min\{ 2^N K 2^{-k}, (2^N K 2^{-k})^{-1}
\}^{-1-s}, $$
first for $q=r$ and then for general $r$ by real interpolation. This can be used to check
\eqref{BesovgKupper}. 

Inequalities   \eqref{sl-lowerbd}  and \eqref{BesovgKupper} 
show that the inequality 
\Be \label{Bernsteinvariant}
\|\cF^{-1} g\|_{L^p} \lc \|g\|_{B^{d(1/p-1/2)}_u(L^{q,r})}
\Ee 
may fail for compactly supported $g$
when  $u\le q$, $1/u-1/q<1/p-1/2$,  $r>0$.  
By \eqref
{locBernsteinvariant}
and its version with 
 $B^{d(1/p-1/2)}_p(L^{q,r})$ for $q>2$ the range $1<u\le p$ is of particular interest in question \ref{openqu}.

\section{On the Constant in the \\Triangle Inequality for $L^{p,r}$, $p<1$}\label{appendix}
In what follows we work with the quasinorm $\|\cdot\|_{p,r} $ on $L^{p,r}$ as defined in
\eqref{qn} or \eqref{Lorentznorms}. The following result was referenced in \S\ref{reviewsect}, together with an open question.

\begin{proposition}\label{ptrianglelemlor}  
Let $0<p<1$, $p<r<\infty$. Then 
\[
\Big\|\sum_k f_k \Big\|_{{p,r}} \le C(p,r) \Big(\sum_k \|f_k\|_{{p,r}}^p \Big)^{1/p}, 
\]
where 
\Be\label{constants}C(p,r) \le A^{1/p}  \Big( \frac 1{1-p}\Big )^{1/p-1/r} \Big(1 + \frac pr \log \frac{1}{1-p} \Big)^{1/p-1/r}
\Ee
and $A$ does not depend on $p$ and $r$.
 \end{proposition}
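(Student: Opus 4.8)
The plan is to obtain the inequality by real interpolation between the trivial case $\rho=p$ and the case $\rho$ finite (or $\rho=\infty$), keeping every constant explicit and uniform in $p$. Since $\|\cdot\|_{p,\rho}$ is monotone and $\bigl|\sum_k f_k\bigr|\le\sum_k|f_k|$, and the inequality is homogeneous, it is equivalent to a norm bound for the linear operator $T\colon\{f_k\}_k\mapsto\sum_k f_k$ between the $p$-normed spaces $\ell^p(L^{p,\rho})$ and $L^{p,\rho}$; write $C(p,\rho)=\|T\|_{\ell^p(L^{p,\rho})\to L^{p,\rho}}$ for the best constant, so that $C(p,r)$ is the quantity in the Proposition. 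At $\rho=p$ the bound is trivial, with constant $1$: by $|a+b|^p\le|a|^p+|b|^p$ for $0<p\le1$,
\[
\Big\|\sum_k f_k\Big\|_{L^p}^p=\int\Big|\sum_k f_k\Big|^p\,d\mu\le\int\sum_k|f_k|^p\,d\mu=\sum_k\|f_k\|_{L^p}^p .
\]

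For $\rho$ finite (and, with obvious modifications, for $\rho=\infty$) I would first re-derive the elementary bound $C(p,\rho)\le\bigl(p^p(1-p)\bigr)^{-1/p}$, essentially as in \cite{sttw}. Normalizing $\sum_k\|f_k\|_{p,\rho}^p=1$, for each $\alpha>0$ split $f_k=f_k\bbone_{\{f_k\le\lambda\alpha\}}+f_k\bbone_{\{f_k>\lambda\alpha\}}$ at the threshold $\lambda\alpha$, and use $\{\sum_k f_k>\alpha\}\subseteq\bigcup_k\{f_k>\lambda\alpha\}\cup\{\sum_k f_k\bbone_{\{f_k\le\lambda\alpha\}}>\alpha\}$ together with a union bound on the first set, Chebyshev's inequality on the second, the distribution-function form \eqref{Lorentznorms} of the quasi-norm, and Hardy's inequality in $L^{\rho/p}$ (whose sharp constant is $(1-p)^{-1}$). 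This gives $C(p,\rho)^p\le\lambda^{-p}+\lambda^{1-p}(1-p)^{-1}$ for every $\lambda>0$, hence $C(p,\rho)^p\le\bigl(p^p(1-p)\bigr)^{-1}$ upon choosing $\lambda=p$, valid for all $\rho\ge p$; this already reproves \eqref{ptrianglelem}. It misses the improvement of the Proposition because it never uses the exponent $\rho$ — that improvement will come from interpolating against the endpoint $\rho=p$.

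The interpolation step: fix an auxiliary exponent $\rho$ with $r\le\rho\le\infty$ and let $\theta=\theta(\rho)=\dfrac{1/p-1/r}{1/p-1/\rho}\in(0,1)$, so that $(L^p,L^{p,\rho})_{\theta,r}=L^{p,r}$. Interpolating $T$ by the real $K$-method between $\ell^p(L^p)\to L^p$ (norm $\le1$) and $\ell^p(L^{p,\rho})\to L^{p,\rho}$ (norm $\le\bigl(p^p(1-p)\bigr)^{-1/p}$), and invoking the vector-valued embedding $\ell^p(L^{p,r})\hookrightarrow\bigl(\ell^p(L^p),\ell^p(L^{p,\rho})\bigr)_{\theta,r}$ — which holds because the outer exponent $p$ does not exceed the interpolation index $r$ (cf.\ the discussion of vector-valued real interpolation in \S\ref{interpolationpara} and \cite{BL}) — yields
\[
C(p,r)\ \le\ \kappa(\theta,p,r,\rho)\,\bigl(p^p(1-p)\bigr)^{-\theta/p},
\]
where $\kappa$ is the product of the equivalence constants in the two identifications just used. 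Making $\kappa$ explicit and then optimizing over the free parameter $\rho$ should produce \eqref{constants}: the power $1/p-1/r=\theta/p$ on $(1-p)^{-1}$ is the interpolation exponent $\theta$, and the factor $\bigl(1+\tfrac pr\log\tfrac1{1-p}\bigr)^{1/p-1/r}$ is the residue of the $\rho$-optimization, which is why it collapses to $1$ both as $r\to p$ and as $r\to\infty$.

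The main obstacle is exactly this constant bookkeeping through the real interpolation functor for quasi-Banach ($p<1$) spaces: one must make the dependence of $\kappa$ on $\theta,p,r,\rho$ fully quantitative and uniform in $p$ — in particular the $\rho$-dependence of the constants in $(L^p,L^{p,\rho})_{\theta,r}=L^{p,r}$ and in the vector-valued embedding — since the abstract theory supplies these only qualitatively; once that is done the final optimization in $\rho$ is elementary. Consistently with the open problem stated after \eqref{Cpr}, one should not expect the logarithmic factor obtained in this way to be sharp.
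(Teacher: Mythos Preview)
Your strategy is genuinely different from the paper's direct decomposition, but it has a real gap at its core.

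The identity you invoke, $(L^p,L^{p,\rho})_{\theta,r}=L^{p,r}$, is \emph{not} the interpolation result recorded in \S\ref{interpolationpara} of the paper or in \cite[\S5.2]{BL}: those cover only couples $(L^{p_0},L^{p_1})$ with $p_0\neq p_1$. Your couple has the \emph{same} first index $p$, so standard reiteration (Holmstedt) does not apply --- the two endpoint spaces sit at the same $\theta$-level of an ambient couple, and interpolation between such spaces typically produces Lorentz--Zygmund corrections rather than simply shifting the second index. Even if the identity can be salvaged for this particular couple, it requires its own proof with explicit constants, and that proof would likely be of comparable difficulty to the proposition itself.

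More seriously, you concede that tracking the constant $\kappa(\theta,p,r,\rho)$ through both identifications is ``the main obstacle'' and then do not carry it out. Your heuristic that the logarithmic factor ``is the residue of the $\rho$-optimization'' is not supported: pushing $\rho\to\infty$ sends $\theta\to 1-p/r$ and delivers the power $(1-p)^{-(1/p-1/r)}$ directly, with no logarithm in sight. Any logarithm would have to come from $\kappa$, i.e.\ from the very constants you leave implicit --- so the argument as written does not establish \eqref{constants}.

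By contrast, the paper proceeds directly: after the standard good/bad split at level $\alpha$ (which yields \eqref{Ealphaest}), it further splits the good part $g_{k,\alpha}$ into a low piece $l_{k,\alpha}=f_k\bbone_{|f_k|\le\alpha/B}$ and a middle piece $m_{k,\alpha}=f_k\bbone_{\alpha/B<|f_k|\le\alpha}$ with a free parameter $B>1$. The low piece is handled by Chebyshev and Hardy, contributing a factor $B^{p-1}/(1-p)$; the middle piece is controlled by decomposing $(0,\infty)$ into $B$-adic intervals, applying Lemma~\ref{embeddinglemma} on each, and then the $L^p$ triangle inequality, contributing a factor involving $(\log B)^{1-p/r}$. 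The logarithm in \eqref{constants} arises explicitly from choosing $B=(1-p)^{-p/(r(1-p))}$ to balance these two contributions.
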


We shall need the following lemma. It can be used to prove the inequality \eqref{embineq} 
 when applied in combination with  \eqref{Lorentznorms}.

\begin{lemma}\label{embeddinglemma}
Let $g:\bbR^+\to \bbR$ be a Riemann integrable function
and let $r<q$.
Then for $0\le \alpha< \beta \le  \infty$, $0<p<\infty$,
\[\sup_{\alpha\le\sigma\le \beta} \sigma|g(\sigma)|^{1/p}\le 
\Big( q \int_\alpha^\beta \sigma^{q} |g(\sigma)|^{q/p} \frac {d\sigma}{\sigma} \Big)^{1/q}
\le
\Big( r \int_\alpha^\beta \sigma^{r} |g(\sigma)|^{r/p} \frac {d\sigma}{\sigma} \Big)^{1/r}.
\]
\end{lemma}

\begin{proof} 
We prove the second inequality, as the first one follows by letting $q\to\infty$.
We may assume that $g$ is a  nonnegative step function
on $[\alpha,\beta]$, i.e. there is a partition $\alpha=b_0<b_1<\dots<b_N=\beta$ so that
$g(s)=c_j$ if $b_{j-1}<s<b_j$; here $c_j\ge 0$. The inequality then reduces to
\[\Big(\sum_{j=1}^N c_j^{ q/p}(b_j^q-b_{j-1}^q)\Big)^{1/q}
\le
 \Big(\sum_{j=1}^N c_j^{r/p}(b_j^r-b_{j-1}^r)\Big)^{1/r}
\]
We set $v_j=c_j^{ r/p}$, $a_j=b_j^r$, and $s=q/r$ so that $s>1$, and see that the last inequality follows from
\Be\label{normalization}
\Big(\sum_{j=1}^N v_j^{s}(a_j^s-a_{j-1}^s)
\Big)^{1/s}
\le
 \sum_{j=1}^N v_j(a_j-a_{j-1}).
\Ee
Since $s\ge 1$ we may  (by the triangle inequality for the $s$-norms) 
replace
$(a_j^s-a_{j-1}^s)$ on the left hand side of \eqref{normalization} with $(a_j-a_{j-1})^s$, and \eqref{normalization} follows from  $\|\cdot\|_{\ell^s}\le \|\cdot\|_{\ell^1}$.
\end{proof}
%

\begin{proof}[Proof of  Proposition \ref{ptrianglelemlor}]
The proof is based on ideas in \cite{st-nw}, \cite{sttw}. For given $\alpha>0$ we split $f_k= g_{k,\alpha}+ b_{k,\alpha}$ where
\[g_{k,\alpha}(x) = \begin{cases} f_{k}(x) &\text{ if } |f_k(x)|\le \alpha
\\
0 &\text{if }  |f_k(x)|> \alpha\end{cases}
\]
and let $b_{k,\alpha}=f_k-g_{k, \alpha}$.
Let 
$$E_\alpha= \{x: b_{k,\alpha}(x) \neq 0 \text{ for some $k$}\}.$$
Then 
\[ \mu(\{x: |\sum_k f_k(x)|>\alpha\}) \le \mu(E_\alpha) + \mu(\{x:\sum_k|g_{k,\alpha}(x)|>\alpha\})
\] and therefore
\begin{multline}\label{decomp}
\Big\|\sum_k f_k\Big\|^p_{{p,r}} \le 
 \Big(r \int_0^\infty \alpha^r \mu(E_\alpha)^{r/p} \frac{d\alpha}{\alpha}\Big)^{p/r}
\\+ 
\Big( r\int_0^\infty \alpha^r \Big[\mu\big(\{x:\sum_k|g_{k,\alpha}(x)|>\alpha\}\big)\Big]^{r/p}\frac{d\alpha}{\alpha}\Big)^{p/r}.
\end{multline}
Now 
\[\mu(E_\alpha)\le \sum_{k} \mu (\{x: b_{k,\alpha}(x) \neq 0\})
\le \sum_{k}\mu\ci{f_k}(\alpha)
\]
and hence
\begin{align}
&\Big(r \int_0^\infty \alpha^r \mu(E_\alpha)^{r/p}\frac{d\alpha}{\alpha}\Big)^{p/r} 
\le 
\Big(r \int_0^\infty \alpha^r \Big(\sum_k \mu\ci{f_k}(\alpha)\Big)^{r/p}\frac{d\alpha}{\alpha}\Big)^{p/r} 
\notag
\\
\label{Ealphaest}
&\le \sum_{k} \Big(r \int_0^\infty \alpha^r  [\mu\ci {f_k}(\alpha)]^{r/p}\frac{d\alpha}{\alpha}\Big)^{p/r} 
 \le \sum_k \|f_k\|_{{p,r}}^p 
\end{align}
here we have used Minkowski's inequality in $L^{r/p}$ (and thus our assumption $r\ge p$).

We now further decompose $g_{k,\alpha}= l_{k, \alpha} +m_{k,\alpha}$ into a low and a middle part where for a suitable constant $B>1$,
\[
l_{k,\alpha}(x)= \begin{cases}
f_k(x) &\text {if } |f_k(x)|\le \alpha/B 
\\
0 &\text{ if } |f_k(x)|>\alpha/B
\end{cases}
\]

and
\[
m_{k,\alpha}(x)= \begin{cases}
0 &\text {if } |f_k(x)|\le \alpha/B 
\\
f_k(x)
&\text{ if }\alpha/B<|f_k(x)|\le \alpha 
\\
0 &\text{ if } |f_k(x)|>\alpha
\end{cases}
\]
For a  favorable choice for $B$ see \eqref{Bchoice} below.
Now
\begin{multline*}
\mu(\{x:\sum_k|g_{k,\alpha}(x)|>\alpha\})
\\ \le
\mu(\{x:\sum_k|l_{k,\alpha}(x)|>\alpha/2\})
+
\mu(\{x:\sum_k|m_{k,\alpha}(x)|>\alpha/2\})
\end{multline*}
and therefore by Minkowski's inequality and a subsequent change of variable
\begin{subequations}
\begin{align}
&\Big(r \int_0^\infty \alpha^r \Big[\mu(\{x:\sum_k|g_{k,\alpha}(x)|>\alpha\})\Big]^{r/p} \frac{d\alpha}{\alpha}\Big)^{p/r}\notag
\\ &\le 2^p
 \Big(r \int_0^\infty \alpha^r \Big[\mu(\{x:\sum_k|l_{k,2\alpha}(x)|>\alpha\})\Big]^{r/p} \frac{d\alpha}{\alpha}\Big)^{p/r}
 \label{lkterms}
\\& +2^p\Big(r \int_0^\infty \alpha^r \Big[\mu(\{x:\sum_k|m_{k,2\alpha}(x)|>\alpha\})\Big]^{r/p} \frac{d\alpha}{\alpha}\Big)^{p/r}.
\label{mkterms}
 \end{align} 
\end{subequations}

Next, by Tshebyshev's inequality
\begin{align*}
&\Big(r \int_0^\infty \alpha^r \Big[\mu(\{x:\sum_k|l_{k,2\alpha}(x)|>\alpha\})\Big]^{r/p} \frac{d\alpha}{\alpha}\Big)^{p/r}
\\
&\le \Big(r \int_0^\infty \alpha^r\Big [\frac {1}{\alpha}\int \sum_k|l_{k,2\alpha}| d\mu
 \Big]^{r/p} \frac{d\alpha}{\alpha}\Big)^{p/r}
 \\
 &\le \Big(r \int_0^\infty \Big [\sum_k\alpha^p\,\frac {1}{\alpha}\int_0^{2B^{-1}\alpha} \mu\ci{l_{k,2\alpha}}(\beta)d\beta
 \Big]^{r/p} \frac{d\alpha}{\alpha}\Big)^{p/r};
 \end{align*} 
  here we have used 
 $\mu_{l_{k,2\alpha}}(\beta)=0$ when $\beta>B^{-1}2\alpha$.
 We now use Minkowski's inequality in $L^{r/p}(d\alpha/\alpha)$. 
 Since $|l_{k,2\alpha}|\le |f_k|$ we see that the last displayed expression is dominated by
 \begin{align*}
 & \sum_k\Big(r \int_0^\infty \Big [\alpha^p \frac {1}{\alpha}\int_0^{2B^{-1}\alpha} \mu\ci{f_k}(\beta)d\beta
 \Big]^{r/p} \frac{d\alpha}{\alpha}\Big)^{p/r}\\
 &=\sum_k 
 \Big(r \int_0^\infty \alpha^r \Big[ \int_0^{2B^{-1}} \mu\ci{f_k}(s\alpha)ds 
 \Big]^{r/p} \frac{d\alpha}{\alpha}\Big)^{p/r}.
 \end{align*}
 We continue as in the proof of Hardy's inequalities and estimate using the integral Minkowski inequality
 \begin{align*}
 &\Big(r \int_0^\infty \alpha^r \Big[ \int_0^{2B^{-1}} \mu\ci{f_k}(s\alpha)ds 
 \Big]^{r/p} \frac{d\alpha}{\alpha}\Big)^{p/r}
 \\ &\le
 \int_{0}^{2B^{-1}} s^{-p}\Big( \int_0^\infty r\beta^{r-1} \mu\ci{f_k}(\beta)^{r/p} d\beta\Big)^{p/r} 
= \frac{2^{1-p}B^{p-1}}{1-p}  \|f_k\|_{L^{p,r}}^p
 \end{align*}

Thus, combining estimates we get
\Be\eqref{lkterms}\,\le \,\frac{2}{1-p}  B^{p-1}\sum_k \|f_k\|_{{p,r}}^p.
\label{lkest}
\Ee

We now estimate the terms in \eqref{mkterms}. We write $[0,\infty)$ as a union over the intervals $I_n=[B^n,B^{n+1}]$, $n\in \bbZ$ and apply Lemma \ref{embeddinglemma} to each interval.
Then \eqref{mkterms} is estimated by
\begin{align}
&2^p\Big(\sum_{n\in \bbZ}r\int_{B^n}^{B^{n+1}} \alpha^r \Big[\mu(\{x:\sum_k|m_{k,2\alpha}(x)|>\alpha\})\Big]^{r/p} \frac{d\alpha}{\alpha}\Big)^{p/r}
\notag
\\
&\le
2^p\Big( \sum_{n\in \bbZ}\Big[p\int_{B^n}^{B^{n+1}} \alpha^p \mu(\{x:\sum_k|m_{k,2\alpha}(x)|>\alpha\}) \frac{d\alpha}{\alpha}\Big]^{r/p}\Big)^{p/r}.
\notag
\end{align}
We now define
\[f_{k,n}(x)= 
\begin{cases} f_k(x) &\text{ if } B^n \le| f_k(x)|\le B^{n+2}
\\ 0 &\text{ otherwise}
\end{cases}
\]
and observe
\[|m_{k, 2\alpha}(x)| \le |f_{k,n}(x)| \text{ if } B^n\le \alpha \le B^{n+1}.\]
Hence we get 
\begin{align*}
&\Big(r\int_{0}^{\infty} \alpha^r \Big[\mu(\{x:\sum_k|m_{k,2\alpha}(x)|>\alpha\})\Big]^{r/p} \frac{d\alpha}{\alpha}\Big)^{p/r}
\\
&\le \Big(\sum_{n=-\infty}^\infty 
\Big[\int_{0}^{\infty} p\alpha^{p-1} \mu(\{x:\sum_k|f_{k,n}(x)|>\alpha\}) d\alpha\Big]^{r/p}\Big)^{p/r}
\\
&\le \Big(\sum_{n=-\infty}^\infty \Big\|\sum_k |f_{k,n}| \Big\|_p^r \Big)^{p/r}
\,\le \Big(\sum_{n=-\infty}^\infty \Big[ \sum_k \|f_{k,n}\|_p^p\Big]^{r/p}  \Big)^{p/r}
\\
&\le \sum_k \Big(\sum_{n=-\infty}^\infty \|f_{k,n}\|_p^r  \Big)^{p/r}.
\end{align*}
Here we have used the triangle inequality in $L^p$, $p<1$ and Minkowski's inequality for the sequence space 
$\ell^{r/p}$, $r\ge p$. Now
\begin{align*}
&\|f_{k,n}\|_p^p\le 
\int_0^{B^n}p\alpha^{p-1} \mu_{f_k}(B) d\alpha+ 
\int_{B^n}^{B^{n+2}} p\alpha^{p} \mu_{f_k}(\alpha)\frac{ d\alpha}\alpha
\\
&\le B^{np} \mu_{f_k}(B) + (\log B^2)^{1-p/r} 
\frac{p}{r^{p/r}}
\Big(\int_{B^n}^{B^{n+2}} r\alpha^{r} \mu_{f_k}(\alpha)^{r/p} \frac{d\alpha}{\alpha}\Big)^{p/r},
\end{align*}
by H\"older's inequality, and
\begin{align*}
&\Big(\sum_n \|f_{k,n}\|_p^r\Big)^{p/r}
\le \Big(\sum_n B^{nr} \mu_{f_k}(B)^{r/p} \Big)^{p/r}\\
&\qquad\qquad\qquad \qquad+
(\log B^2)^{1-p/r} 
\frac{p}{r^{p/r}}\Big(\sum_n\int_{B^n}^{B^{n+2}}r\alpha^{r-1} \mu_{f_k}(\alpha)^{r/p}d\alpha\Big)^{r/p}
\\
&\le \Big(1+\frac{p}{r^{p/r}}2^{p/r}
(\log B^2)^{1-p/r}  \Big) 
\Big(\int_0^\infty r\alpha^{r-1}\mu_{f_k}(\alpha) d\alpha\Big)^{p/r}.
\end{align*} Consequently
\Be
\eqref{mkterms}\,\le \, 2^p\Big(1+\frac{p}{r^{p/r}}2^{p/r}
(2\log B)^{1-p/r}  \Big) \sum_k \|f_k\|_{L^{p,r}}^p\,.
\label{mkest}
\Ee
We now  make the choice
\Be\label{Bchoice}
B= (1-p)^{-\frac p{r(1-p)} }
\Ee
so that
 \[
 \frac{B^{p-1}}{1-p}=(1-p)^{-(1-p/r)}, \qquad  \log B= \frac{p}{r(1-p)} \log \frac{1}{p-1}\,.
 \]
Combining the various estimates we obtain
\begin{multline}
\Big\|\sum_k f_k \Big\|^p_{L^{p,r}} \\ \le
\Big( 1+ \frac 2{(1-p)^{1-p/r}} + \frac{p2^{p+1}}{r^{p/r}} \frac p{r(1-p) } \log(\frac 1{1-p})^{1-p/r}\Big)
\sum_k\|f_k\|_{L^{p,r}}^p
\end{multline}
which yields the lemma.
\end{proof}


\begin{thebibliography}{10}






\bibitem{bs}
Albert Baernstein, II; Eric T. Sawyer.
\emph{Embedding and multiplier theorems for $H^p(\bbR^n)$.}
Mem. Amer. Math. Soc. 53 (1985), no. 318.


\bibitem{bks} Sorina Barza, Viktor Kolyada, Javier Soria. Sharp constants to the triangle inequality in Lorentz spaces. Trans. Amer. Math. Soc. {361}, no. 10 (2009).

\bibitem{besh} Colin Bennett and Robert Sharpley. \emph{Interpolation of operators}, Academic Press, 1988.

\bibitem{BL} Joram Bergh and J\"orgen  L\"ofstr\"om. \emph{Interpolation Spaces. An Introduction.} Springer, Berlin, 1976. 


\bibitem{cianchi-pick} Andrea 
Cianchi, Lubo\v s  Pick.
An optimal endpoint trace embedding. 
Ann. Inst. Fourier (Grenoble) 60 (2010), no. 3, 939--951. 



 \bibitem{fs} Charles Fefferman, Elias M. Stein. Some maximal inequalities. Amer. J. Math. 93 (1971) 107--115. 

\bibitem{franke} Jens Franke. On the spaces $F^s_{pq}$  of Triebel-Lizorkin type: pointwise multipliers and spaces on domains. Math. Nachr. 125 (1986), 29-- 68. 

\bibitem{gott} Amiran 
Gogatishvili, Bohum\'ir  Opic, Sergey  Tikhonov, Walter  Trebels.
Ulyanov-type inequalities between Lorentz-Zygmund spaces. 
J. Fourier Anal. Appl. 20 (2014), no. 5, 1020--1049. 

\bibitem{grafakos-slavikova} Loukas Grafakos, Lenka Slav\'ikov\'a. A sharp version of the 
H\"ormander multiplier theorem. To appear in International Mathematics Research Notices.


\bibitem{hedberg-netrusov}  Lars Inge Hedberg, Yuri  Netrusov.  {\it An axiomatic approach to function spaces, spectral synthesis, and Luzin approximation.} Mem. Amer. Math. Soc. 188 (2007), no. 882, vi+97 pp.

\bibitem{hoer} 
Lars H\"ormander. Estimates for translation invariant operators in $L^p$  spaces. Acta Math. 104 (1960),  93--140.

\bibitem{hunt}
 Richard Hunt. On $L(p,q)$ spaces. Enseignement Math. (2) 12 (1966), 249--276.
 
\bibitem{jawerth}
Bj\"orn  Jawerth. Some observations on Besov and Lizorkin-Triebel spaces, Math. Scand. 40 (1977), no. 1, 94--104. 

 
\bibitem{kalton}
Nigel J. Kalton. 
Linear operators on $L^p$  for $0<p<1$. 
Trans. Amer. Math. Soc. 259 (1980), no. 2, 319--355.  


\bibitem{ko-lee} Hyerim Ko, Sanghyuk  Lee. Fourier transform and regularity of characteristic functions. Proc. Amer. Math. Soc. 145 (2017), no. 3, 1097--1107. 

\bibitem{lorentz2}
G.G. Lorentz.
On the theory of spaces  $\Lambda$.
Pacific J. Math. 1, (1951),  411--429. 


\bibitem{Pe}
Jaak  Peetre. On spaces of Triebel-Lizorkin type.  Ark. Mat. 13 (1975),
123--130.

\bibitem{seeger1}
Andreas Seeger. A limit case of the H\"ormander multiplier theorem. Monatsh. Math. 105 (1988), 151-160.

\bibitem{seeger2} \bysame. Estimates near $L^1$ for Fourier multipliers and maximal functions. Arch. Math. (Basel) 53 (1989), 188-193.

\bibitem{seeger3}\bysame. 
Remarks on singular convolution operators. 
Studia Math. 97 (1990), no. 2, 91--114. 


 \bibitem{sickel-tr}
Winfried   Sickel and Hans  Triebel. H\"older inequalities and sharp embeddings in function spaces of $B^s_{pq}$ and $F^s_{pq}$  type, Z. Anal. Anwendungen 14 (1995), no. 1, 105--140. 

\bibitem{slavikova} Lenka Slav\'ikov\'a. On the failure of the H\"ormander multiplier theorem in a limiting case. Preprint, May 2018, arXiv:1805.09907.

\bibitem{stein-sing} Elias M. Stein. \emph{Singular integrals and differentiability of functions.} Princeton University Press, Princeton, New Jersey, 1970.

\bibitem{stein-editor}
\bysame.
Editor's note: the differentiability of functions in $\bbR^n$. 
Ann. of Math. (2) 113 (1981), no. 2, 383--385. 

\bibitem{sttw} Elias M. Stein, 
Mitchell H. Taibleson, 
Guido Weiss. 
Weak type estimates for maximal operators on certain $H^p$ classes. 
Proceedings of the Seminar on Harmonic Analysis (Pisa, 1980). 
Rend. Circ. Mat. Palermo (2) 1981, suppl. 1, 81--97. 


\bibitem{stw} Elias M. Stein, Guido  Weiss.  \emph{Introduction to Fourier analysis on Euclidean spaces.} Princeton Mathematical Series, No. 32. Princeton University Press, Princeton, N.J., 1971. 


\bibitem{st-nw} Elias M. Stein, Norman J. Weiss.
On the convergence of Poisson integrals. 
Trans. Amer. Math. Soc. 140 (1969),  35--54.
 
 \bibitem{taibleson} Mitchell H. Taibleson.
 On the theory of Lipschitz spaces of distributions on Euclidean $n$-space. I. Principal properties. J. Math. Mech. 13 (1964), 407--479. On the theory of Lipschitz spaces of distributions on Euclidean $n$-space. II. Translation invariant operators, duality, and interpolation. J. Math. Mech. 14 (1965), 821--839.
 
 \bibitem{triebel1} Hans Triebel. \emph{Theory of function spaces.} Monographs in Mathematics, 78. 
Birkh\"auser Verlag, Basel, 1983. 

\bibitem{triebel2} \bysame. \emph{Theory of function spaces. II.} Monographs in Mathematics, 84. 
Birkh\"auser Verlag, Basel, 1992. 

\bibitem{triebelhom} \bysame. \emph{Tempered homogeneous function spaces.}
EMS Series of Lectures in Mathematics. European Mathematical Society (EMS), Z\"urich, 2015. 

\bibitem{vyb} Jan  Vyb\'iral.
A new proof of the Jawerth-Franke embedding. Rev. Mat. Complut. 21 (2008), no. 1, 75--82. 



\end{thebibliography}
\end{document}